%% file: main.tex
\documentclass{amsart}

\input{preamble}

\begin{document}
\title{Average Distortion of Commensurators of Hyperbolic Groups}

\author{Nir Lazarovich}

\address{Department of Mathematics
	Technion
	Haifa 32000
	Israel}
 \email{lazarovich@technion.ac.il}
 
\author{Suraj Krishna M S}

\address{Department of Mathematics, Ashoka University
Rai Sonepat Haryana, 131029, India}

\email{suraj.meda@ashoka.edu.in}

\author{Mahan Mj}

\address{School of Mathematics, Tata Institute of Fundamental Research, Mumbai-40005, India}
\email{mahan@math.tifr.res.in}
\email{mahan.mj@gmail.com}
\urladdr{http://www.math.tifr.res.in/~mahan}
\date{\today}

\subjclass[2010]{20F65, 20F67 (Primary), 57M50 (Secondary)}
\keywords{hyperbolic group, commensurator}

\thanks{NL was partially supported by the Israeli Science Foundation (grant no. 1576/23). 
SKMS was supported by the Israeli Science Foundation (grant no. 1226/19) at the Technion, an annual research grant of Ashoka University, and by an ANRF-MATRICS grant (ANRF/ARGM/2025/000741/MTR).
MM was supported by the Department of Atomic Energy,
Government of India, under project no.12-R\& D-TFR-5.01-0500 as also by an endowment of the
Infosys Foundation.  }

\begin{abstract}
We prove that commensurators of a geometrically rigid residually finite hyperbolic group have bounded average distortion.
\end{abstract}

\maketitle



\section{Introduction}

Let $G$ be a one-ended torsion-free hyperbolic group. 
Paulin \cite{paulin1991outer}, building on the Rips machine (cf. Bestvina--Feighn \cite{bestvina1995stable}), showed that  $|\Out(G)|<\infty$ if and only if $G$ does not split over a virtually cyclic group.
 A  problem motivating this paper is to try to generalize this result to commensurators of $G$, i.e. to isomorphisms between finite index subgroups of $G$.

A one-ended hyperbolic group for which $\Out(G)$ is finite is called  \emph{rigid}. 
Bowditch \cite{bowditchjsj} provides a geometric characterization of rigidity: a one-ended hyperbolic group is rigid if and only if either (1) $G$ surjects to a triangle group with finite kernel or (2) the Gromov boundary $\partial G$ is connected and does not contain a cut pair of points.
If $G$ is as in (1), then it is virtually a surface group, and hence virtually non-rigid. 
We therefore focus on case (2) which we call \emph{geometrically rigid}. 
We remark that if $G$ is torsion-free, the two notions are the same.

Trying to generalize Paulin's result to commensurators,  we run into two problems:
First, one cannot define an ``outer commensurator group'', since the inner automorphisms do not form a normal subgroup in $\Comm(G)$. 
Second, since $|\Out(G)| < \infty$ whenever $G$ is geometrically rigid, one can
still consider the index $[\Comm(G):G]$
for such groups. However, 
there are (geometrically) rigid groups for which $[\Comm(G):G] = \infty$.
For example, arithmetic uniform lattices in $\SO(n,1)$ for $n\ge 3$ are such.
This is a more substantial problem.

Let us elaborate more on this last example:
Let $M$ be a hyperbolic $n$-manifold ($n\ge 3$) and let $G$ be its fundamental group. 
Let $\phi:H_1\to H_2$ be an isomorphism between finite index subgroups of $G$. 
Then, by Mostow Rigidity \cite{mostow1968quasi} $\phi$ is induced by an isometry $\overline \Phi:M_1\to M_2$ between the finite covers $M_1,M_2$ of $M$ corresponding to $H_1,H_2$.
Alternatively, the lift $\tild \Phi:\bbH^n \to \bbH^n$ of $\overline \Phi$ to the universal cover $\bbH^n$ of $M$ is a $\phi$-equivariant isometry of $\bbH^n$.

Since in this paper we work with countable/discrete groups, we would like to phrase the existence of $\tild \Phi$ more intrinsically in terms of $G$. 
This can be done by approximating $\tild \Phi : \bbH^n \to \bbH^n$ by a $\phi$-equivariant map $\Phi: G\to G$ which is a $(C,C)$-quasi-isometry (with respect to the word metric on $G$).
Importantly, the constant $C$ here is \textbf{independent} of the map $\phi$ (but might depend on  $G$ and its word metric).
We conjecture that this is indeed the case for all geometrically rigid hyperbolic groups:

\begin{conjecture}
If $G$ is a geometrically rigid hyperbolic group, then there exist a constant $C=C(G)$ such that every abstract commensurator $\phi:H_1\to H_2$ admits a $\phi$-equivariant map $\Phi:G\to G$ which is $(C,C)$-quasi-isometry.
\end{conjecture}

This conjecture can be seen as an interpretation of 0.3.C in Gromov's \emph{Hyperbolic Groups} \cite{gromov1987hyperbolic}, and has interesting implications.
In this paper we show that a variant of this conjecture holds -- namely, the ``average distortion'' of commensurators is uniformly bounded.

\subsubsection*{Average Distortion of commensurators}
Let $G$ be a hyperbolic group.
Let $X$ be a graph on which $G$ acts freely and cocompactly, e.g., its Cayley graph with respect to some finite generating set $S$. 
Let $\phi:H\to G$ be an injective map where $H\le G$ has finite index. Consider a $\phi$-equivariant (cellular) map $\Phi:X\to X$ sending vertices to vertices and edges to edge-paths.
We denote by $\ell(\Phi(e))$ the length of the edge-path $\Phi(e)$.
Assume $e_1,\dots,e_k$ are representatives of the $H$-orbits of edges in $X$ (note that $k = k_0\cdot [G:H]$ where $k_0$ is the number of edge $G$-orbit representatives).
Define the \emph{total distortion} of $\Phi$ by $$\TD(\Phi) = \sum_{i=1}^k \ell(\Phi(e_i))$$ and the \emph{average distortion} of $\Phi$ by $$ \AD(\Phi) = \tfrac 1{k}\TD(\Phi)  \propto \tfrac 1{[G:H]}\TD(\Phi).$$
Denote $\AD(\phi) = \min_\Phi \AD(\Phi)$ where the minimum is taken over all $\phi$-equivariant cellular maps $\Phi:X\to X$.

\begin{restatable}{theoremA}{maintheorem}\label{main theorem}
    Let $G$ be a geometrically rigid, residually finite, hyperbolic group. 
    Let $X$ be a graph with a free and cocompact $G$-action.
    There exists a finite index subgroup $\dot G\le G$ and a constant $C\ge 0$ such that for every finite index subgroup $H\le \dot G$ and every
    injective homomorphism $\phi:H\to G$, we have 
    
    \begin{equation}\label{the right inequality}\AD(\phi)\le C.
    \end{equation}
    If, moreover, the image of $\phi$ has finite index in $G$ then 
    \begin{equation}\label{both inequalities for AD}
    \frac1C\le \AD(\phi)\le C.
    \end{equation}
    \end{restatable}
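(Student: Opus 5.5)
We argue by contradiction, using limiting actions on $\mathbb{R}$-trees and the Rips machine in the spirit of Paulin's argument. Suppose no such $\dot G$ and $C$ exist. Fix a descending sequence of finite index normal subgroups $G=G_0\ge G_1\ge\cdots$ with trivial intersection, which exists by residual finiteness. For each $n$ there are a finite index subgroup $H_n\le G_n$ and an injective homomorphism $\phi_n:H_n\to G$ with $\lambda_n:=\AD(\phi_n)\to\infty$. Let $\Phi_n$ be a minimizing $\phi_n$-equivariant cellular map. We pass to the quotient graphs $\bar X_n=H_n\backslash X$, which are finite covers of $G\backslash X$ of degree $[G:H_n]$. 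The total distortion is then the total length of the image of a single map $\bar X_n\to G\backslash X$ that induces $\phi_n$ on $\pi_1$. Minimality means this map is ``tight'': no homotopy rel the $\pi_1$-data decreases total length.

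\textbf{Step 1: rescaling.} We rescale by $\lambda_n$ and use averaging to pick a basepoint, in fact a random edge of $\bar X_n$, at which the local distortion is comparable to $\lambda_n$. Because the $H_n$ lie in $G_n$, the injectivity radius of $\bar X_n$ tends to infinity. Around the chosen basepoint, $\bar X_n$ therefore looks like a larger and larger ball in $X$.

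\textbf{Step 2: limiting action.} Pulling back the metric of $X$ via $\Phi_n$, rescaled by $1/\lambda_n$, gives a sequence of pseudometrics. We take an ultralimit of $(X, \tfrac1{\lambda_n}\Phi_n^*d_X)$ based at lifts of the chosen edges. Since $H_n\to\{1\}$ only in the sense that injectivity radius grows, we do not get a $G$-action directly. Instead we use the Benjamini–Schramm style averaging: a.e. local limit is a $G$-invariant random structure on $X$. This yields a nontrivial isometric action of $G$ on a limiting $\mathbb{R}$-tree $T$, coming from the hyperbolicity of the target $G$.

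\textbf{Step 3: properties of the action.} We show the action is stable with small arc stabilizers. Tightness of $\Phi_n$ rules out a global fixed point, since otherwise the total length could be shortened. The Rips machine together with the standard Bestvina–Feighn/Paulin argument then gives a splitting of $G$ over a virtually cyclic group. By Bowditch this contradicts geometric rigidity, because $\partial G$ would have a cut pair.

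This establishes the upper bound \eqref{the right inequality}; the constant $C$ may depend on $\dot G$, which is taken to be some $G_n$. For the lower bound in \eqref{both inequalities for AD}, we apply the upper bound to an inverse. If $\phi(H)$ has finite index, then $\phi^{-1}$ restricted to $\phi(H)\cap\dot G$ is a commensurator. Composing a near-optimal $\Phi$ with a near-optimal map for $\phi^{-1}$ gives a map equivariant for the identity on a finite index subgroup, and such a map has average distortion bounded below by a constant. A counting argument on total lengths, $\TD(\Phi'\circ\Phi)\le \TD(\Phi)\cdot\max\ell(\Phi'(e))$ averaged suitably, then yields $\AD(\phi)\ge 1/C$. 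One must handle the index bookkeeping: the index of $\phi(H)$ compared with that of $H$ is controlled by Euler characteristic or volume-type invariants.

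\textbf{Main obstacle.} Step 2, producing a genuine action of $G$ with no fixed point from data that is only controlled on average, is the crux. Only the average $\AD$ is unbounded, not the maximal distortion, so the limit needs invariance under $G$ obtained through a unimodular/random-rooted limit rather than a pointwise one. Making the Rips machine apply to such measured or random limits is what I expect to be hardest.
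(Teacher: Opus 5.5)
Your Step~2 is not a technical hurdle; it is the whole proof, and as set up it cannot produce what Step~3 needs. In Paulin's argument each $\phi_n$ is defined on all of $G$. So the twisted actions $g\cdot x=\phi_n(g)x$ on $X$, rescaled by the minimal displacement of a generating set, converge to an isometric $G$-action on an asymptotic cone. That normalization, not tightness, is what rules out a global fixed point. Here $\phi_n$ is defined only on $H_n\le G_n$ with $\bigcap_n G_n=1$. Every $g\ne 1$ therefore eventually lies outside $H_n$, and no element of $G$ survives to act in the limit. The pseudometrics $\tfrac1{\lambda_n}\Phi_n^*d_X$ are only $H_n$-invariant. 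A Benjamini--Schramm limit yields at best a $G$-invariant probability measure on pseudometrics on $X$, each induced by a map of $X$ to an $\bbR$-tree, and $G$ permutes these samples rather than acting by isometries on one tree. So there are no arc stabilizers, no stability, and no group acting on a tree, and the Rips machine does not apply. A measured analogue would at best decompose an orbit equivalence relation, and nothing in your sketch turns that into a cut pair of $\partial G$. Moreover, since only the average is large, the local distortion at a uniformly random root can be $o(\lambda_n)$ with probability tending to $1$. The invariant limit may then be degenerate, while rooting at the rare long edges destroys invariance. The paper avoids limits and proves a finite-scale inequality that is additive over $\overline Y=Y/H$. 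First, Delzant's singular pattern, built from the Petyt--Spriano--Zalloum/Rips--Sela bicombing, gives $\TD(\Phi)\le\sum|\overline\lambda|$ and $\sum|\bfd\overline\lambda|\preceq[G:H]$. Second, the congruence-locus argument (the finitary substitute for your ``typical root'', using large injectivity radius and minimality of $\TD_Y$) shows that all but $O(\vol(\overline Y))$ of the pattern is locally minimizing. Third, locally minimizing subtracks are taut, plump away from their coboundary, and have barycentrically thick coboundary; this is where the absence of cut pairs enters. The tree-counting lemma then gives $|\lambda|\preceq|\bfd\lambda|$, and summing these local inequalities is what controls an average.

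Your lower-bound argument does not close either. Bounding $\TD(\Phi'\circ\Phi)$ in terms of $\TD(\Phi)$ needs a pointwise bound on $\ell(\Phi'(e))$, but the upper bound only controls its average, and $\Phi$ may traverse exactly the edges on which $\Phi'$ is long. The index bookkeeping also cannot come from Euler characteristic, which vanishes, e.g., for closed hyperbolic $3$-manifold groups; what is needed is finite index rigidity. The paper proves $[G:\phi(H)]\preceq\TD(\Phi)$ via uniform quasi-surjectivity \cite[Proposition 7.1]{lazarovich2025finite}, then uses $[G:H]=[G:\phi(H)]$ from \cite[Theorem A]{lazarovich2025finite}. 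For the inequality as stated, the composition is in fact unnecessary. The bound you take for granted for identity-equivariant maps holds for every $\phi$-equivariant $\Phi$ with $\phi$ injective, by the following argument.
\begin{itemize}
\item The edges whose endpoints have equal images span components on which $\Phi$ is constant.
\item If $x$ and $hx$ lie in one component, then $\phi(h)$ fixes $\Phi(x)$, so $h=1$. Hence each component is a finite vertex set $A$ embedding in $X/H$.
\item Every edge $e$ of $\bfd A$ has $\ell(\Phi(e))\ge 1$, and over orbit representatives $A$ each $H$-orbit of such edges is counted at most twice.
\item The Cheeger inequality $|\bfd A|\ge h(X)|A|$ then gives $\TD(\Phi)\ge\tfrac{h(X)}2|X^0/H|$.
\end{itemize}
This is a lower bound on $\AD(\phi)$ depending only on $X$, with no hypothesis on the image.
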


    We note that the existence of a constant $C$ such that \eqref{both inequalities for AD} holds is equivalent to the existence of a constant $C'$ such that 
    \begin{equation}\label{main theorem inequality}
    \frac{1}{C'}\cdot [G:H]\le \TD(\Phi)\le {C'}\cdot [G:H].
    \end{equation}
    for the map $\Phi$ that minimizes $\TD(\Phi)$ among all $\phi$-equivariant maps $\Phi:X\to X$.
    We also note that while the constant $C$ depends on $X$, the existence of such a constant does not (see \Cref{all graphs are equivalent}).

\subsubsection*{Commensurators of negatively curved manifolds}

\begin{corollary}\label{main theorem manifolds}
Let $M$ be a closed manifold of dimension $\ge 3$ of negative sectional curvature with residually finite fundamental group. Then, there exists $C\ge 1$ and a cover $\dot M$ such that any homotopy equivalence $F:M_1\to M_2$ between finite covers of $\dot M$ is homotopic to a differentiable map $f:M_1\to M_2$ such that
\begin{equation}\label{manifold inequality}
    \tfrac 1 {\Vol(M_1)}\int_{T^1M_1}\|D_x f(v)\|\; \bfd \vol(x,v) \le C
\end{equation}
\end{corollary}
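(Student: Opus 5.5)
We deduce the corollary from Theorem~\ref{main theorem} by converting the combinatorial average distortion on a graph into the analytic quantity \eqref{manifold inequality}.

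\textbf{Setup.} Let $G=\pi_1(M)$. It is hyperbolic since $M$ is closed and negatively curved. It is one-ended with $\partial G\cong S^{n-1}$, $n\ge3$, which is connected and has no cut pairs. So $G$ is geometrically rigid and residually finite, and Theorem~\ref{main theorem} gives $\dot G\le G$ and a constant $C_0$. Let $\dot M$ be the cover corresponding to $\dot G$. Fix a finite graph $\Gamma\subset M$ that is the $1$-skeleton of a triangulation of $M$, with geodesic edges, and let $X$ be its preimage in $\tild M$. Then $G$ acts on $X$ freely and cocompactly.

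\textbf{From $F$ to a cellular map.} Let $M_i$ correspond to $H_i\le\dot G$, $i=1,2$. The homotopy equivalence $F$ induces an isomorphism $\phi:H_1\to H_2\le G$, which is injective. Theorem~\ref{main theorem} gives a $\phi$-equivariant cellular $\Phi:X\to X$ whose total length over $H_1$-orbits of edges is at most $C_0 k_0[G:H_1]$. Also $[G:H_1]=\Vol(M_1)/\Vol(M)$.

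\textbf{Extending to a smooth map.} We extend $\Phi$ equivariantly over higher skeleta by geodesic coning, using barycentric coordinates. Since $\tild M$ is CAT($0$)-like, take the straight simplex in $\tild M$ spanned by the images of the vertices. Then smooth. This gives a $\phi$-equivariant map $\tild f$ descending to $f:M_1\to M_2$. The map $f$ is homotopic to $F$ because $M_2$ is aspherical and both maps induce $\phi$ up to conjugacy.

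\textbf{Bounding the integral.} On each lifted simplex $\sigma$, $\|D\tild f\|$ must be bounded by the lengths of the images of its edges. For straight simplices with vertices at pairwise distances $\le L$, the derivative of the barycentric map is $\lesssim L$ times a constant depending on the fixed triangulation and the curvature bounds. The subtlety is that the image edges $\Phi(e)$ are edge-paths, not geodesics. We therefore first straighten: replace $\Phi(e)$ by the geodesic between its endpoints, which only shortens. So
\[
\int_{T^1\sigma}\|D\tild f\|\le c\sum_{e\subset\sigma}\ell(\Phi(e))
\]
holds up to an additive constant per simplex. Smoothing costs at most another constant per simplex. Summing over the $H_1$-orbit representatives of simplices, which number $\propto[G:H_1]$, and using that each edge lies in boundedly many simplices, we get
\[
\int_{T^1M_1}\|Df\|\le c'\bigl(\TD(\Phi)+[G:H_1]\bigr)\le C\,\Vol(M_1).
\]

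\textbf{Main obstacle.} The delicate step is the uniform pointwise Jacobian bound for barycentric/straightened simplices. We need $\|D\|$ controlled linearly by the edge lengths of the image, uniformly even for very degenerate or very long image simplices. I would handle this with a comparison/convexity argument for the barycentric map (à la Besson--Courtois--Gallot), or simply a Lipschitz estimate on each piece of a barycentric subdivision.
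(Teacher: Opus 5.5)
Your proposal is correct and follows essentially the same route as the paper: straighten each simplex of the lifted triangulation by iterated geodesic coning on the points $\Phi(v_i)$ in the CAT(0) space $\tild M$, bound its Lipschitz constant by a constant times $\sum_{i<j}d(\Phi(v_i),\Phi(v_j))$ (hence by a constant times $\sum_{i<j}\ell(\Phi([v_i,v_j]))$), sum over $H_1$-orbits of simplices using $\AD(\Phi)\le C$ and $\Vol(M_1)\propto[G:H_1]$, and smooth at the end (the paper uses Rademacher and Meyers--Serrin). The step you flag as delicate is exactly \Cref{lemma: Lipschitz on simplex}, which the paper proves by induction on dimension using only the CAT(0) comparison inequality and convexity of the distance function, so the constant depends only on $n$ and no Besson--Courtois--Gallot argument or lower curvature bound is needed; just note that iterated coning must use a $G$-invariant ordering of vertices (lifted from an ordering of the vertices of the triangulation of $M$) so that the maps agree on shared faces and are $\phi$-equivariant.
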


Conjecturally, there exists $f$ that satisfies $\tfrac 1C\le  \|D_xf(v)\|\le C$ for all $(x,v)\in T^1M_1$. 
This holds if $M$ has constant negative sectional curvature, since by Mostow Rigidity then $f$ can be chosen to be an isometry, i.e. $\|D_xf(v)\|=1$.

\subsubsection*{Outer automorphisms}

The main theorem gives immediately an alternative proof of Paulin's Theorem (under the additional assumption of residual finiteness) that does not use the Rips machine.

\begin{corollary}[Paulin's Theorem for residually finite]\label{paulin thm}
If $G$ is a  residually finite geometrically rigid hyperbolic group then $\Out(G)$ is finite.
\end{corollary}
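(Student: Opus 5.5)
We derive this from \Cref{main theorem} applied to automorphisms: the finite-index subgroup $\dot G$ cuts down the possible automorphisms, and the average-distortion bound then confines what survives to finitely many outer classes. Let $\dot G\le G$ and $C$ be as in \Cref{main theorem}. Since $G$ is finitely generated, it has only finitely many subgroups of index $[G:\dot G]$. Their intersection $K$ is therefore a finite-index subgroup of $\dot G$ that is characteristic in $G$. Every $\alpha\in\Aut(G)$ restricts to an injective homomorphism $\alpha|_K:K\to G$ with image $K$ of finite index. So \Cref{main theorem}, applied with $H=K$, gives $\AD(\alpha|_K)\le C$. As $[G:K]$ is fixed, the total distortion $\TD(\Phi_\alpha)$ of a minimizing $\alpha|_K$-equivariant map $\Phi_\alpha:X\to X$ is bounded by $C'=C\,k_0[G:K]$, uniformly in $\alpha$.

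Next we turn this into a bound on an actual representative of the outer class. Each of the finitely many $K$-orbit representative edges $e_i$ satisfies $\ell(\Phi_\alpha(e_i))\le C'$. Fix a base vertex $x_0$. Composing $\alpha$ with the inner automorphism by an element of $G$ that moves $\Phi_\alpha(x_0)$ into a fixed finite fundamental domain does not change the outer class. Note that if $\Phi$ is $\alpha|_K$-equivariant, then $g\Phi$ is $(c_g\circ\alpha)|_K$-equivariant, so this normalization is compatible with equivariance. After it, $\Phi_\alpha(x_0)$ lies in a fixed finite set. Since $X/K$ is a finite connected graph, every vertex in a fixed finite $K$-fundamental domain is joined to $x_0$ by a path of bounded length using the edges $e_i$ and their translates. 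Hence $\Phi_\alpha$ maps this fundamental domain into a ball of uniformly bounded radius $R$ about $x_0$.

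The step needing the most care is passing from $K$ back to all of $G$. Take a finite generating set $\{k_j\}$ of $K$. Then $d(x_0,k_jx_0)$ is bounded, and since $\Phi_\alpha$ is $R'$-Lipschitz on the relevant paths (each edge goes to a path of length at most $C'$), we get
\[
d(\alpha(k_j)\Phi_\alpha(x_0),\Phi_\alpha(x_0))\le R''.
\]
The action is free and proper and $\Phi_\alpha(x_0)$ lies in a fixed finite set, so each $\alpha(k_j)$ lies in a fixed finite subset of $G$. Thus $\alpha|_K$ takes only finitely many values up to the normalization.

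It remains to see that $\alpha|_K$ determines $\alpha$. If $\alpha|_K=\beta|_K$, then $\gamma=\alpha^{-1}\beta$ fixes $K$ pointwise. For $g\in G$ and $k\in K$ we have $gkg^{-1}\in K$, so $\gamma(g)k\gamma(g)^{-1}=gkg^{-1}$. Hence $g^{-1}\gamma(g)$ centralizes $K$. A finite-index subgroup of a non-elementary hyperbolic group has finite centralizer, and $G$ is non-elementary because $\partial G$ is connected with no cut pair. So there are only finitely many possibilities for $g^{-1}\gamma(g)$ on each of the finitely many generators of $G$, and therefore only finitely many such $\gamma$. We conclude that $\Out(G)$ is finite. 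The main obstacle is the normalization step: one must check that the distortion bound together with connectivity of $X/K$ really pins down $\Phi_\alpha$ near $x_0$, and the equivariance remark above is what makes the inner-automorphism adjustment legitimate.
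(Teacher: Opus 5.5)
Your proof is correct and follows essentially the same route as the paper. Both arguments pass to a characteristic finite-index subgroup $K\le\dot G$, apply \Cref{main theorem} to get a uniform bound on $\TD$, conclude that there are only finitely many equivariant maps up to translation, and use a finite center/centralizer argument to show that restriction to $K$ has finite kernel. The only difference is cosmetic: the paper first proves $\Out(K)$ finite by applying the theorem to all of $\Aut(K)$, then transfers this to $G$ via \Cref{out characteristic} using the finite center $Z(K)$, whereas you normalize by inner automorphisms of $G$ directly and use the finite centralizer $Z_G(K)$, which also spares you the $\Aut(G/K)$ step.
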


\subsubsection*{Average distortion of translation length}
For $g\in G$ let $$\ell(g) = \lim_{n\to \infty} \tfrac1n \|g^n\|$$ be its stable translation length. We would like to define the translation length of ``$\phi(g)$'' for some isomorphism $\phi:H\to H'$ between finite index subgroups. However, $\phi$ might not be defined on $g$.
To remedy this, we need to lift $g$ to $H$. 

\begin{definition}
    The \emph{lift of $g$ to $H$} is the power $g^m$ for the minimal $m\in \bbN$ such that $g^m\in H$. The exponent $m$ is called the \emph{lifting index of $g$ to $H$}.
\end{definition}

Given $\phi:H\to H'$ we define $$\ell(\phi,g) = \tfrac 1m\ell (\phi(g^m))$$
Unlike the stable translation length, $\ell(\phi,g)$ is not invariant under conjugation by elements of $G$, but it is invariant under conjugation in $H$. 
To fix this, let $\{t_i\}_{1\le i\le[G:H]}$ be a set of representatives of $G/H$ and define  $$\AL(\phi,g) = \frac{1}{[G:H]}\sum_{i=1}^{[G:H]}\ell(\phi,t_i\ii gt_i)$$

\begin{corollary}\label{cor: AL}
    Let $G$ be a geometrically rigid, residually finite hyperbolic group. 
    Then there exists $C$ such that for every finite index subgroup $H\le G$, injective isomorphism $\phi:H\to G$ and element $g\in G$ we have $\AL(\phi,g)\le C\cdot \ell (g)$.
\end{corollary}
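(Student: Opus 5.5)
We derive this from \Cref{main theorem}, applied to $\phi$ restricted to $H\cap\dot G$. There is one wrinkle: \Cref{main theorem} only covers subgroups of $\dot G$. Passing to $H'=H\cap\dot G$ changes the lifting indices by at most a factor of $[G:\dot G]$. We then average over $G/H'$ instead of $G/H$. Because $\ell(\phi,g)=\tfrac1{m'}\ell(\phi(g^{m'}))$ for any $m'$ with $g^{m'}\in H$, the lift to $H'$ computes the same quantity $\ell(\phi,\cdot)$. Also, each coset of $H$ splits into $[H:H']$ cosets of $H'$, and conjugating by $h\in H$ leaves $\ell(\phi,\cdot)$ unchanged. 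Hence $\AL$ computed with respect to $H'$ equals $\AL$ computed with respect to $H$, and it suffices to treat $H\le\dot G$. We fix the Cayley graph $X$ and let $\Phi$ be a $\phi$-equivariant cellular map with $\AD(\Phi)\le C$.

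Fix $g$ and a representative $t_i$, and write $g_i=t_i^{-1}gt_i$. Let $m_i$ be the lifting index of $g_i$ to $H$. Choose a vertex $x$ on (or within bounded distance of) a quasi-axis of $g$, so that the path $\gamma$ from $x$ to $g^nx$ follows $n$ translates of a $g$-fundamental segment. Its length is then $n\ell(g)+O(1)$, after choosing $x$ appropriately or passing to a power of $g$. We conjugate the picture: $\ell(\phi(g_i^{m_i}))$ is at most $\tfrac1N$ times the length of $\Phi$ applied to the path from $y$ to $g_i^{m_iN}y$, for any vertex $y$. By equivariance this is bounded by the sum of $\ell(\Phi(e))$ over the edges $e$ of the $g_i^{m_i}$-periodic path. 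Dividing by $m_i$ gives
\[
\ell(\phi,g_i)\le \frac1{m_i}\sum_{e\in \text{one }g_i^{m_i}\text{-period of } t_i^{-1}\gamma}\ell(\Phi(e)).
\]
One period of that path consists of $m_i$ copies of a $g$-segment of length about $\ell(g)$. So the right-hand side is the average of $\ell(\Phi)$ over the edges of $t_i^{-1}\gamma$, viewed inside the cyclic quotient.

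The key averaging step comes next. As $i$ ranges over $G/H$, the edges $t_i^{-1}e$, for $e$ in a single $g$-fundamental segment $\sigma$ (of length about $\ell(g)$), run over every coset of $H$. Here we pick $\gamma$ so that $\sigma$ consists of edges, and we may use $g_i$-periodicity to reposition within the period. Concretely, summing over $i$, with the factor $1/m_i$ accounting for the $m_i$ translates, yields
\[
\sum_i \ell(\phi,g_i)\le \sum_{e\in\sigma}\sum_{i}\ell(\Phi(t_i^{-1}e)).
\]
For each fixed $e$, the set $\{t_i^{-1}e\}$ is a complete set of $H$-orbit representatives of the edges in the $G$-orbit of $e$. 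Therefore $\sum_i\ell(\Phi(t_i^{-1}e))\le \TD(\Phi)\le C' [G:H]$, where $\TD$ is independent of the choice of orbit representatives by $H$-equivariance. Dividing by $[G:H]$ gives $\AL(\phi,g)\le C'|\sigma|$.

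The main obstacle is the bookkeeping that matches the $g_i$-periods to the $H$-orbit representatives. The fix is to note that $\sum_i \tfrac1{m_i}\sum_{\text{period}}$ equals a sum over the double cosets $\langle g\rangle\backslash G/H$. Each double coset of size $m$ contributes its period of $m|\sigma|$ edges, weighted by $1/m$ and repeated $m$ times. This gives exactly one copy of each $H$-orbit of the edges $G\sigma$. Finally, we replace $|\sigma|$ by $\ell(g)$. Since $\ell(\phi,g^k)=k\ell(\phi,g)$ and $\ell(g^k)=k\ell(g)$, we can apply the bound to $g^k$ with a segment $\sigma_k$ of length $k\ell(g)+O(1)$. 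Letting $k\to\infty$ gives $\AL(\phi,g)\le C\ell(g)$, and torsion elements are trivial in the bound.
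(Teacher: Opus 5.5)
Your proof is correct and is essentially the paper's argument. You take $\Phi$ with bounded average distortion from \Cref{main theorem}, after restricting $\phi$ to a subgroup of $\dot G$, which does not change $\AL$. You use $\phi$-equivariance to bound each $\ell(\phi,t_i^{-1}gt_i)$ by $\Phi$-lengths of coset-translates of a path from $x$ toward $g^kx$. You then bound the resulting sum over coset representatives of $\ell(\Phi(t_i^{\pm 1}e))$ by $\TD(\Phi)\le C'[G:H]$.

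The only difference is bookkeeping. The paper eliminates lifting indices by passing to a normal $H$ and replacing $g$ by a power lying in $H$, while your count over $\langle g\rangle\backslash G/H$ handles general $m_i$ directly and is valid. At the end you only need $d(x,g^kx)/k\to\ell(g)$, so your claim that concatenated fundamental segments have length $n\ell(g)+O(1)$ is unnecessary and can be dropped.
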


We note that both $\ell(\phi,g)$ and $\AL(\phi,g)$ are invariant under the following equivalence relation: Let $\phi:H\to G$ and $\psi:K\to G$ be injective maps from finite index subgroups of $G$. We write $\phi\sim \psi$ if there exists some finite index subgroup $L$ such that $\phi|_L$ and $\psi|_L$ are defined and equal.
We recall that the \emph{abstract commensurator} $\Comm(G)$ of $G$ is the group of all equivalence classes of isomorphisms between finite index subgroup of $G$, under composition.

\subsubsection*{Relative commensurators of hyperbolic subgroups of hyperbolic groups}

For a subgroup $G\le G_1$, the \emph{(relative) commensurator of $G$ in $G_1$} is the subgroup $$\Comm_{G_1}(G)=\{\gamma\in G_1\;|\;\gamma G \gamma\ii \cap G\text{ has finite index in }G\text{ and }\gamma G\gamma\ii\}$$

We prove the following variant of the main result of \cite{lazarovich2023commensurated} (under the additional assumption of residual finiteness) 
\begin{corollary}[see {\cite[Theorem A]{lazarovich2023commensurated}}]\label{cor commensurated hyperbolic subgroup} Let $G\le G_1$ be hyperbolic groups, and assume that $G$ is residually finite. If $[\Comm_{G_1}(G):G]=\infty$, then $G$ is virtually a free product of surface and free groups.
\end{corollary}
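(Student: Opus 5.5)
We prove Corollary~\ref{cor commensurated hyperbolic subgroup} by contraposition: assume $G$ is not virtually a free product of surface and free groups, and show $[\Comm_{G_1}(G):G]<\infty$.

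\emph{Step 1: reduce to a geometrically rigid vertex group.} Pass to a torsion-free finite index subgroup of $G$; this exists since $G$ is residually finite hyperbolic. This changes neither the hypothesis nor the conclusion up to finite index, because commensurators of commensurable subgroups agree. Take the Grushko decomposition and then the Bowditch JSJ decomposition of each one-ended factor. If every one-ended factor had only surface-type (hanging) and cyclic pieces, $G$ would virtually be a free product of surface and free groups. So some one-ended factor $A$ contains a rigid vertex group $R$. Such an $R$ is quasiconvex in $G$, hence in $G_1$. It is hyperbolic, residually finite as a subgroup of $G$, and by Bowditch's characterization it is geometrically rigid; if needed we pass to a finite index subgroup of $R$ to avoid triangle-group quotients.

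\emph{Step 2: transfer commensuration.} Each $\gamma\in\Comm_{G_1}(G)$ induces an isomorphism $\phi_\gamma$ between finite index subgroups of $G$. Uniqueness of the Grushko and JSJ decompositions up to finite index forces $\phi_\gamma$ to permute rigid vertex groups of finite index subgroups up to conjugacy. Since there are finitely many $G$-orbits of these pieces, a finite index subgroup $\Gamma_0\le\Comm_{G_1}(G)$ consists of elements that, after multiplying by an element of $G$, commensurate $R$ itself. So $\gamma R\gamma\ii\cap R$ has finite index in both $R$ and $\gamma R\gamma\ii$.

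\emph{Step 3: apply Theorem~\ref{main theorem} to $R$ and count.} Fix the finite index $\dot R\le R$ and the constant $C$ from Theorem~\ref{main theorem}. For each such $\gamma$, conjugation gives $\phi_\gamma: H\to R$ with $H\le\dot R$ of finite index. Theorem~\ref{main theorem} gives a $\phi_\gamma$-equivariant map $\Phi$ on a Cayley graph of $R$ with $\AD(\Phi)\le C$. By averaging, some edge orbit, hence some vertex $v$, has every adjacent edge sent to a path of length at most $C'$. Equivariance then bounds, for $h$ in a generating set of a conjugate of $H$, the distance $|\phi_\gamma(h)|$ in terms of $|h|$.

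Compare this with the actual map $x\mapsto\gamma x$ in $G_1$. Quasiconvexity of $R$ in $G_1$ and hyperbolicity of $G_1$ then show that $\gamma$ lies within bounded distance of $R\cdot F$ for a fixed finite set $F\subset G_1$. The point is that $\gamma$ coarsely realizes an equivariant map with a point of bounded displacement, and the centralizer/limit set argument places $\gamma$ near the coarse stabilizer of $\Lambda R$. Thus $\gamma\in R F'$ for a fixed finite $F'$, so $[\Gamma_0 : \Gamma_0\cap G]<\infty$, and hence $[\Comm_{G_1}(G):G]<\infty$.

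\emph{Main obstacle.} The hardest step is converting the \emph{average} bound into a pointwise bound on $\gamma$ in Step 3. Average distortion controls most edges but gives no uniform quasi-isometry constant. My first attempt is a pigeonhole argument: choose a vertex orbit with small distortion, and observe that a bounded-distortion equivariant map and conjugation by $\gamma$ induce the same boundary map on $\Lambda R$. A point where both are coarsely close then pins $\gamma$ within bounded distance of a vertex of $R$, up to the uniform quasiconvexity constant. A secondary difficulty is Step 2, namely showing that commensurators respect rigid pieces of the JSJ up to finite index.
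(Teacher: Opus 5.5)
\textbf{There are genuine gaps in Steps 1 and 3.}

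\textbf{Step 1.} Your reduction fails in two ways.

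First, a one-ended factor whose JSJ has only hanging-surface and cyclic vertex groups need not be virtually a surface group. For example, gluing three once-holed surfaces along their boundary curves gives a one-ended hyperbolic group with no rigid vertex group. It is not virtually a free product of surface and free groups, since a one-ended group of that form is virtually a closed surface group. Your argument says nothing about such $G$.

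Second, a rigid vertex group $R$ is rigid only relative to its incident edge groups. As an abstract group it can be free or split over $\bbZ$ (e.g.\ the vertex groups of a double of $F_2$ along a rigid multiword). So \Cref{main theorem} cannot be applied to $R$, and passing to finite index does not help.

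The idea you are missing, which the paper uses, makes vertex groups irrelevant: a nontrivial cyclic JSJ is itself an obstruction. $\Comm_{G_1}(G)$ permutes the commensurability classes of edge groups, and there are finitely many $G$-orbits of these. A double-coset count then produces an edge group $Z\cong\bbZ$ with $[\Comm_{G_1}(Z):Z]=\infty$, which is impossible in the hyperbolic group $G_1$. Hence each one-ended Grushko factor $A_i$ has trivial JSJ; note that $A_i$ inherits $[\Comm_{G_1}(A_i):A_i]=\infty$ from $G$ when $[\Comm_{G_1}(G):G]=\infty$. So $A_i$ is a surface group or geometrically rigid, and only the latter case needs the main theorem, applied to $A_i$ itself.

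\textbf{Step 3.} This step does not go through either.

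The claim that $R$ is quasiconvex in $G_1$ is unjustified, because $G$ need not be quasiconvex in $G_1$. That distortion is the whole difficulty: if the relevant subgroup were quasiconvex in $G_1$, finiteness of its commensurator index would be a standard fact, and residual finiteness would play no role.

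Also, bounded images of the edges at a single vertex $v$ give no control of $\phi_\gamma(h)$ for $h\in H$. Any path from $v$ to $hv$ crosses edges in other $H$-orbits. The boundary-map and limit-set reasoning you sketch does not supply this control.

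The paper does not look for a pointwise bound. It averages over coset representatives to get $\AL(\phi,g)\le C\,\ell(g)$ (\Cref{cor: AL}). Then, in \Cref{commensurators of rigids}, it picks $h_n\in\Comm_{G_1}(G)$ with $d(X,h_nX)\ge n$ in the Cayley graph of $G_1$. Exponential divergence of paths in $h_nX$ from a quasi-axis of $g$ in $G_1$ gives $\ell(\phi_n,tgt^{-1})\ge e^{(n-\rho)/\bdelta}\,\ell_{G_1}(g)$ for every $t\in G$. This lower bound is uniform over all conjugates, hence holds for their average, and it contradicts the upper bound as $n\to\infty$. Comparing an averaged upper bound with a uniform lower bound is what resolves your ``main obstacle''.
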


\subsubsection*{Outline of the paper and of the proof of \cref{main theorem}}

Let $G$ be a residually finite, geometrically rigid hyperbolic group, let $\phi:H\to H'$ be an isomorphism between finite index subgroups and let $\Phi:X\to X$ be a $\phi$-equivariant cellular map on the Cayley graph $X$ of $G$. 

In \Cref{sec: singular patterns}, we will use the globally stable cylinders of Petyt-Spriano-Zalloum \cite{petyt2025stable} and ideas of Delzant \cite{delzant1995image} to construct a singular pattern $\calF$ on the Rips complex $Y$ of $X$ (with respect to some constant $D$), and a singular pattern $\overline\calF$ on its quotient $\overline Y :=Y/H$.
This pattern is simply a graph that is immersed in $\overline Y$. 
Each connected component $\overline\lambda \subseteq \overline\calF$ can be thought of as a compactly supported 1-cochain in $Y$.
We show the following inequalities
\[
    \TD(\Phi)\preceq\sum_{\lambda \subseteq \overline\calF}|\lambda| \quad \text {and} \quad \sum_{\lambda \subseteq \overline\calF} |\bfd \lambda| \preceq [G:H].
\]
where the sums run over the connected components $\lambda$ of $\overline\calF$,  and $|\lambda|$ and $|\bfd\lambda|$ denote the size of the support of $\lambda$ and its coboundary respectively.

Thus, to prove the right inequality of \eqref{main theorem inequality}, it suffices to show $|\lambda|\preceq |\bfd\lambda|$.
If $\lambda$ is minimal, i.e. it has the minimal size among all 1-cochains with the same coboundary, then this inequality is related to the positivity of a co-dimension 1 Cheeger constant. 
In \Cref{sec: cheeger} we define the co-dimension 1 Cheeger constant, and state \Cref{thm: positive 1-cheeger constant} about its positivity.

In \Cref{sec: locally minimizing}, we show that for a suitable $\Phi$ most of $\calF$ is ``locally minimizing'', in the sense that the intersections with large (but fixed radius) balls are minimizing in these balls. 

Then, in \Cref{sec: tautness}, we show that a minimal (and locally minimal) 1-cochain is ``taut'' -- i.e.\ it is contained in a  (uniform) neighborhood of the the convex hull of its coboundary.

Finally, in \Cref{sec: thick}, we show that the convex hulls of minimal 1-cochains are ``thick'' (see \Cref{def: plump,def: barycenterically thick}), and use this to show that $|\lambda| \le |\bfd\lambda|$.

In \Cref{sec: pf of main theorem}, we prove the left inequality of \eqref{main theorem inequality} by using the uniform quasi-surjectivity \cite[Proposition 7.1]{lazarovich2025finite} of the map $\Phi$. This finishes the proof of \Cref{main theorem}.

In \Cref{sec: applications}, we provide proofs for the applications mentioned above.

\subsection*{Acknowledgements} We gratefully acknowledge the hospitality of Ashoka University, the Tata Institute of Fundamental Research, and the Newton Institute during visits at various stages of this collaboration.

\subsection*{Setup and notation}

\subsubsection*{The graph $X$ and metric notation}
Throughout the paper we fix a geometrically rigid residually finite hyperbolic group $G$.
By passing to a finite index subgroup if necessary we may assume that $G$ is torsion free. 

We fix a graph $X$ on which $G$ acts freely and cocompactly, to be chosen in \Cref{sec: singular patterns}.
We denote by $X^0$ its vertices, and by $d(\cdot,\cdot)$ the shortest path metric on $X^0$.

For $A,B\subseteq X^0$ and $y\in X^0$, we denote by $$d(y,A) = \min \{d(x,y)\;|\;x\in A\}$$ the distance of $y$ to $A$ and by $$d(A,B) = \min\{d(a,b)\;|\;a\in A,b\in B\}$$ the distance between the subsets $A,B$. We denote by $$\calN_R(A) = \{y\in X^0\;|\;d(y,A)\le R\}$$ the $R$-neighborhood of $A$.
The $R$-ball around $x$ is the set $\calB_R(x) = \calN_R(\{x\})$.

We denote by $[x,y]$ any geodesic path between $x,y$.

\subsubsection*{Delta notation}

We use $\bdelta$ to denote any constant that depends only\footnote{The constant $\bdelta$ will depend also on the fixed bicombing $q$ on $X$, see \Cref{the bicombing} and the remark following it} on $X$. 
For example, the hyperbolicity of $X$ can be written as: For all $x,y,z\in X^0$, $[x,y]\subseteq \calN_\bdelta([x,z]\cup[z,y])$.
The bold delta notation allows us to write sentences like ``the number of points in a ball of radius $\bdelta$ is $\bdelta$'', or equations like ``$\bdelta^2 = \bdelta = 2\bdelta$'', as each appearance of $\bdelta$ stands for a possibly different constant that depends only on $X$.
We will use $\bdelta(D,E,F,\dots)$ to denote a constant that depends on $X$ and on the parameters $D,E,F,\dots$.

\subsubsection*{The complex $Y$}

For $D\ge 1$, the \emph{Rips complex} $\Rips_D(X)$ is the simplicial complex with vertices $X^0$, and simplices spanned by $v_1,\dots,v_n$ whenever the diameter $\diam\{v_1,\dots ,v_n\}\le D$.
Throughout the paper, let $Y$ be the 2-skeleton of $\Rips_D(X)$ for some $D$.
Note that the dependence of $Y$ on $D$ is suppressed in our notation.
We endow the vertices $Y^0=X^0$ with the metric $d(\cdot,\cdot)$ previously defined on $X^0$. Note that this metric is not the same as the path metric on the 1-skeleton $Y^1$ of $Y$.\footnote{It is however quasi-isometric to it with quasi-isometry constants that depend on $D$.}

Note that if $H\le G$ is a finite index subgroup, then 
        \begin{equation}\label{eq: vol < index}
            \vol(Y/H)\le \bdelta(D) [G:H]
        \end{equation}
where $\vol(Y/H)$ denotes the number of simplices of all dimensions in $Y/H$.

We note that $Y/H$ is not necessarily a simplicial complex, but there exists $\dot G\le G$ (depending on $D$) such that if $H\le \dot G$ then $Y/H$ is simplicial. 

\subsubsection*{Homology and Cohomology}
For a simplicial complex $Z$, let $C_i(Z)$ be the simplicial $i$-chains with $\bbZ/2\bbZ$ coefficients and let $\partial:C_i(Z)\to C_{i-1}(Z)$ be the boundary map.
We identify elements of $C_i(Z)$ with finite sets of simplices of $Z$. 

Let $C^i_c(Z)$ be the compactly supported $i$-cochains with $\bbZ/2\bbZ$ coefficients, and let $\bfd:C^i_c(Z)\to C^{i+1}_c(Z)$ be the coboundary map.
We again identify elements of $C^i_c(Z)$ with finite sets of simplices of $Z$.

\begin{remark}
    Under these identifications $\alpha+\beta$ is the same as the symmetric difference of (the sets corresponding to) $\alpha$ and $\beta$.
We denote by $|\alpha|$ the size of $\alpha$ as a set.
\end{remark}

\section{Singular patterns}
\label{sec: singular patterns}

\subsection{Globally stable cylinders}
\label{subsec: globally stable cylinder}

It follows from the existence of globally stable cylinders \cite{petyt2025stable} by Petyt-Spriano-Zalloum  and the work of Rips-Sela \cite{rips1995canonical} that residually finite hyperbolic groups admit \emph{canonical representatives} in the following sense:
\begin{corollary}[Petyt-Spriano-Zalloum \cite{petyt2025stable},  Rips-Sela \cite{rips1995canonical}]\label{the bicombing}
 Let $G$ be a residually finite, torsion free, hyperbolic group, there exists a graph $X$ with a free and cocompact $G$-action, and a function $q:X^0\times X^0 \to C_1(X)$ such that for all $x,y,z\in X^0,g\in G$:
\begin{enumerate}[label = (Q\arabic*)]
    \item \label{quasi geodesic} $q(x,y) = e_1+\dots+e_n$ where $e_1,\dots,e_n$ is a $\bdelta$-quasi-geodesic simple edge-path from $x$ to $y$.
    \item $q(x,y) = q(y,x)$, 
    \item $gq(x,y) = q(gx,gy)$, and 
    \item \label{bounded defect} $|q(x,y)+q(y,z)+q(z,x)|\le \bdelta$.
\end{enumerate}
\end{corollary}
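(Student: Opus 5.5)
The plan is to derive this from the globally stable cylinders of Petyt--Spriano--Zalloum, with the Rips--Sela construction as a model for how cylinders yield canonical paths. As I understand \cite{petyt2025stable}, a residually finite hyperbolic group $G$ admits a $G$-equivariant assignment $(x,y)\mapsto \mathrm{Cyl}(x,y)$. Here $\mathrm{Cyl}(x,y)$ is a subset of a uniform neighborhood of $[x,y]$, and it is symmetric in $x,y$. The key stability property is that for any triangle $x,y,z$, the cylinders $\mathrm{Cyl}(x,y)$ and $\mathrm{Cyl}(x,z)$ coincide outside a $\bdelta$-neighborhood of the center of the triangle.

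First I will choose the graph $X$. Take a $G$-invariant set of vertices, for instance the union of all cylinders, and build a graph on them. Join two vertices by an edge when they lie in a common cylinder and are close, for example consecutive in the natural linear order of the cylinder, or within distance $\bdelta$. Since $G$ is torsion free, the action is free, and it is cocompact because adjacent vertices are at bounded distance in the Cayley graph. Next, for each pair $x,y$, let $q(x,y)$ be the mod-2 sum of the edges of a path in $X$ that runs through $\mathrm{Cyl}(x,y)$ in order from $x$ to $y$. I will make this path a \emph{simple} path: either use the canonical order on the cylinder (a cylinder is a coarse line, so its points can be ordered by projection to $[x,y]$, with ties broken equivariantly), or, alternatively, take the unique path in a canonical tree spanned by the cylinder. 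Equivariance (Q3) and symmetry (Q2) are inherited from the cylinders, provided the ordering and tie-breaking are chosen invariantly and without reference to the orientation. The risk is a group element reversing $(x,y)$; freeness of the action together with symmetric rules handles this. Quasi-geodesicity (Q1) follows because the cylinder lies in the $\bdelta$-neighborhood of $[x,y]$ and the ordered path makes bounded progress at each step.

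The main step is the bounded defect (Q4). Given $x,y,z$, by stability the three paths $q(x,y)$, $q(y,z)$, $q(z,x)$ agree pairwise outside a $\bdelta$-ball around the center of the triangle. Away from that ball, each edge appears exactly twice, for instance on $q(x,y)$ and $q(x,z)$ along the branch near $x$, and so it cancels mod 2. The remaining edges lie in a ball of radius $\bdelta$, and there are at most $\bdelta$ of them by cocompactness. The main obstacle is to ensure that the \emph{edge paths}, not just the vertex sets of the cylinders, agree on the overlaps. The local path structure must therefore depend only on the local cylinder data. I will deal with this by defining edges locally: join consecutive points in the order, where the order near a point is determined by the cylinder in a neighborhood of that point, which stability makes independent of the third vertex. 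If ordering issues persist, I would fall back on Rips--Sela's approach and average over the vertices of a bounded window, taking a canonical mod-2 chain rather than a path. However, (Q1) demands a genuine simple path, so the local-order definition is the one I would pursue.
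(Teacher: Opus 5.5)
There is a genuine gap in (Q4), and it sits exactly where you locate the ``main obstacle''. Stability of cylinders is a statement about \emph{sets} of points. What is available is the bounded symmetric difference $|C(x,y)+C(y,z)+C(z,x)|\le\bdelta$. Note that $C(x,y)$ and $C(x,z)$ do not coincide outside a ball around the center; they agree only along the common branch, up to boundedly many exceptional points.

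This property says nothing about how the points of a cylinder are ordered, and every order you propose refers to the far endpoint: projection to a geodesic $[x,y]$ (itself a non-canonical choice), or quantities such as $d(x,\cdot)-d(y,\cdot)$. Along the common branch of $q(x,y)$ and $q(x,z)$, these agree with their $z$-analogues only up to an additive $O(\bdelta)$ error. Since a cylinder has width up to $\bdelta$, such an error can swap neighbouring points at the same level. Nothing prevents this at every level of a long branch, so the consecutive pairs (the edges) of the two paths can differ linearly often, and no bound on the defect follows.

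Symmetry (Q2) prevents you from always ordering from one end. For example, if $z$ is close to $y$, then between the midpoint of $[x,y]$ and the center, $q(x,y)$ must be ordered by a rule involving $y$ and $q(x,z)$ by one involving $z$. Your sentence ``the order near a point is determined by the cylinder in a neighborhood of that point'' is therefore the whole content of (Q4), and no such rule is constructed. You would need a translation-invariant linear order on windows, depending only on the window and an orientation, and compatible on overlaps. The chain-averaging fallback is not what Rips--Sela do and, as you note, violates (Q1).

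The missing ingredient is the Rips--Sela decomposition of each cylinder into a linearly ordered sequence of disjoint slices $S_1,\dots,S_n$ of diameter at most $\bdelta$. By Rips--Sela and Delzant's Lemme I.1, this decomposition is equivariant, reverses when $x,y$ are swapped, and is quasi-isometrically indexed. Crucially, it is stable at the level of \emph{slices}: the slices of $C(x,y),C(y,z),C(z,x)$ agree up to $\bdelta$ exceptions.

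The paper then sidesteps any ordering of points within a slice by its choice of $X$. It takes $X$ to be the graph whose vertices are the nonempty subsets of the Cayley graph of diameter at most $\bdelta$, adjacent when at distance at most $\bdelta$, so each slice is a single vertex. It lets $q(x,y)$ be the path $x,S_1,\dots,S_n,y$ through the slices of $C(p(x),p(y))$, for an equivariant choice function $p$. Disjointness of slices gives simplicity, and (Q1)--(Q4) are inherited from the corresponding slice properties. Your tie-breaking and spanning-tree considerations become unnecessary.
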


\begin{proof}
  Let $Z$ be the Cayley graph of $G$ with respect to some generating set.
  By \cite[Theorem 1.1]{petyt2025stable}, $G$ admits globally stable cylinders. This is a map $C:G\times G\to C_0(Z)$ such that for all $x,y,z\in Z^0$:
  \begin{enumerate}[label = (C\arabic*)]
      \item $[x,y]\subseteq C(x,y)\subseteq \calN_\bdelta([x,y])$
      \item $C(x,y) = C(y,x)$
      \item $gC(x,y)=C(gx,gy)$ for all $g\in G$, and
      \item $|C(x,y) +C(y,z)+C(z,x)|\le \bdelta$.
  \end{enumerate}
  Given such cylinders, Rips-Sela \cite{rips1995canonical} partition $C(x,y) = S_1\sqcup \dots \sqcup S_n$ into an (ordered) sequence of disjoint \emph{slices}.
  We denote by $\calS(x,y)$ the sequence $S_1,\dots,S_n$. We summarize their properties, as they appear in \cite{delzant1995image}. Let $x,y,z\in Z^0$:
  \begin{enumerate}[label = (S\arabic*)]\setcounter{enumi}{-1}
  \item $\diam(S)\le \bdelta$ for every slice $S$ of $C(x,y)$ (by \cite[Lemme I.1 (a)]{delzant1995image}).
  \item  If $\calS(x,y)=(S_1,\dots,S_n)$ then $x\in  S_1 ,y\in S_n$ and for all $1\le i,j\le n$ we have $$\tfrac 1\bdelta|i-j|-\bdelta\le  d(S_i,S_j)\le \bdelta|i-j|+\bdelta.$$
  The inequality on the left is from \cite[Lemme I.1 (a)]{delzant1995image}, while the inequality on the right is a consequence of the triangle inequality and \cite[Lemme I.1 (b)]{delzant1995image}.
  \item $\calS(x,y) = \calS(y,x)$ with the order reversed \cite[Lemma 3.4]{rips1995canonical}.
  \item $g\calS(x,y) = \calS(gx,gy)$ for all $g\in G$, and 
  \item The slices of $|\calS(x,y)+\calS(x,z)+\calS(y,z)|\le \bdelta$, 
  where $+$ denotes the symmetric difference.
This follows from (C4) and \cite[Lemme I.1 (c)]{delzant1995image}.
  \end{enumerate}
  
    Let $X$ be the graph whose vertices are non-empty subsets of $Z$ of diameter at most $\bdelta$, and two vertices $S_1,S_2$ are connected by an edge if $d_Z(S_1,S_2)\le \bdelta$.
    Since $G$ is torsion free, $G$ acts on $X$ freely and cocompactly.

    Let $p:X^0\to Z^0$ be a $G$-equivariant choice function, i.e. $p(x)\in x$.
    Define $q(x,y)$ to be the path in $X$ whose vertices are $x, S_1,\dots,S_n,y$, where $S_1,\dots,S_n$ are the slices of $C(p(x),p(y))$.\footnote{Here it should be understood that if $x=S_1$ or $S_n=y$ we take the appropriate simple path without these repetitions.}
\end{proof}

The function $q$ in \Cref{the bicombing} is referred to as a \emph{bicombing} on $X$. Throughout the paper, we will fix a bicombing $q$ as in \Cref{the bicombing}. The constants denoted by $\bdelta$ in fact depend on $X$ and the fixed bicombing $q$.

\


\subsection{The singular pattern}

Let $H\le G$ be a finite index subgroup. 
 Recall that $Y$ is the 2-skeleton of the Rips complex $\Rips_D(X)$ of $X$ for some $D$.
Let $\phi:H\to G$ be an injective homomorphism, and let $\Phi:Y^0 \to X^0$ be a $\phi$-equivariant map.

The following construction of a singular pattern is inspired by Delzant's singular foliation \cite{delzant1995image}:

\textbf{Regular connectors:} For each edge $e$ of $Y$ with endpoints $u,v$, map the edge $e$ maps linearly to the edge path $q(e) = q(\Phi(u),\Phi(v))$.
The preimages of midpoints of edges in $q(e)$ are points in the interior of $e$ which we call \emph{regular connectors}. If $f\in q(e)$ we denote its corresponding connector on $e$ by $\bfc_{e,f}$. See \Cref{fig: singular pattern}.

\textbf{Regular Segments:} 
Let $\Delta$ be a 2-simplex in $Y$ with edges $e_1,e_2,e_3$.
If an edge $f$ of $X$ is in $q(e_1)\cup q(e_2) \cup q(e_3)$ but not in $q(e_1)+q(e_2)+q(e_3)$, then $f$ appears in exactly two out of the three edge paths, say $q(e_1),q(e_2)$. We connect the two regular connectors $\bfc_{e_1,f}$ and $\bfc_{e_2,f}$ with a straight line segment in $\Delta$. These are called \emph{regular segments}.


\textbf{Singular segments and dead-ends:} 
Let $\Delta,e_1,e_2,e_3$ be as above, and let $f \in q(e_1)+q(e_2)+q(e_3)$. Then $f$ belongs to either one or three of $q(e_1),q(e_2),q(e_3)$. 
If $f\in q(e_i)$, we introduce a new point $\bfc_{e_i,\Delta,f}$, near $\bfc_{e_i,f}$, in the interior of $\Delta$, and connect it with a short line segment. We call the point $\bfc_{e_i,\Delta,f}$ a \emph{dead-end connector} and the line segment a \emph{singular segment}. 
We choose these points so that the singular line segments are disjoint. See \cref{fig: singular pattern}.


\begin{figure}
    \centering
    \includegraphics[width=\textwidth]{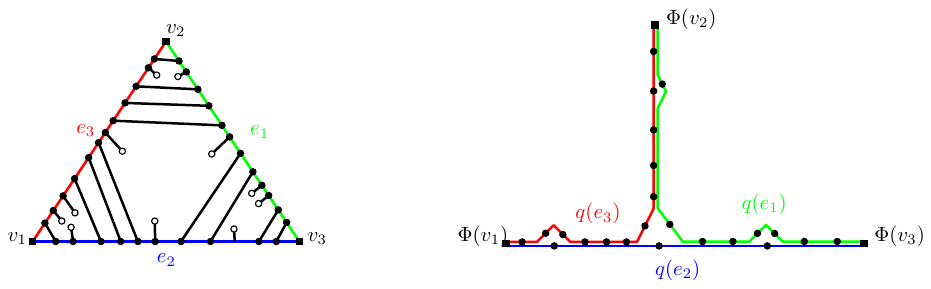}
    \caption{On the left, the singular pattern $\calF$ on a 2-simplex, and on the right the (quasigeodesic) triangle with sides given by edge-paths $q(e_1),q(e_2),q(e_3)$ and vertices $\Phi(v_1),\Phi(v_2),\Phi(v_3)$.}
    \label{fig: singular pattern}
\end{figure}

The graph $\calF$ whose vertices are connectors and edges are segments in $Y$ is called the \emph{singular pattern associated with $\Phi$ (and $q$)}.

\begin{remark} \label{tracks might intersect}
Note that regular segments are allowed to cross each other: Indeed, if two edges, say $f,f'$, appear in different orders in the quasigeodesic paths $q(e_1),q(e_2)$, then the regular segment between $\bfc_{e_1,f},\bfc_{e_2,f}$ intersects the one between $\bfc_{e_1,f'},\bfc_{e_2,f'}$.
In \cite{delzant1995image}, each intersecting regular segment is replaced by two non-intersecting singular segments. We do not do this in order to get a correspondence with compactly supported 1-cochains, as explained in \cref{section: tracks cochains}.  
Though we will not use this fact, we remark that by \cite[Lemme I.1(e)]{delzant1995image} the number of intersections in a 2-simplex is bounded. 
\end{remark}

The singular pattern $\calF$ is $H$-invariant, and so it gives rise to a singular pattern $\overline\calF$ on $\overline Y = Y/H$.
The graph $\overline\calF$ is a finite graph on the compact space $\overline Y$, and so it makes sense to define its size $|\overline\calF|$ to be the total number of regular connectors in $\overline\calF$.

Now, let $\Phi:Y^1\to X^1$ be a $\phi$-equivariant extension of $\Phi:Y^0\to X^0$ to the edges of $Y$. 
Let $e_1,\dots,e_t$ be representatives of $H$-orbits of edges in $Y$ (note that $t\le \bdelta(D)\cdot [G:H]$), define $$\TD_Y(\Phi) := \sum_{i=1}^t \ell(\Phi(e_i)) $$
If we assume further that $\Phi:Y^1\to X^1$ maps each edge to a geodesic path (e.g. if $\Phi$ minimizes $\TD_Y(\Phi)$),  then $$\TD_Y(\Phi)  =\sum_{i=1}^td(\Phi(\partial e_i))$$ 
where $d(\Phi(\partial e_i))$ is the distance between the images of the two endpoints $\partial e_i$ of $e_i$.


Every edge of $X$ is an edge of $Y$, so the edges $e_1,\dots,e_t$ contain $H$-orbit representatives of the edges of $X$. 
Hence,
$$ \TD(\Phi|_X)\le \TD_Y(\Phi).$$
The number of connectors of $\calF$ on an edge $e$ of $Y$ is the size of $q(e) = q(\Phi(\partial e))$ which by property \ref{quasi geodesic} can be bounded below by $d(\Phi(\partial e))$. 
Thus, if $\Phi:Y\to X$ maps edges to geodesics, then
$$ \TD_Y(\Phi) \le|\overline\calF|. $$

Combining these two inequalities we get 
\begin{equation}\label{eq: AD < F} 
\TD(\Phi)\le |\overline\calF|.
\end{equation}

        By property \ref{bounded defect}, the number of dead-ends in each 2-simplex $\Delta$ is at most $\bdelta$, and so the total number of dead-ends in $\overline\calF$ is at most $\bdelta \vol(\overline Y)$, where \(\vol (\overline Y)\) denotes the number of 2-simplices in \(\overline Y\).
        Thus, 
        \begin{equation}\label{eq: deadends < vol}
            \#\{\text{dead-ends in }\overline\calF\}\le \bdelta \vol(\overline Y).
        \end{equation}

\subsection{Tracks and compactly supported 1-cochains}\label{section: tracks cochains}

The connected components of the singular pattern $\calF$ (or $\overline\calF$) are called \emph{tracks}.
Denote by $|\lambda|$ the number of regular connectors in a track $\lambda$, then clearly
        \begin{equation}
        \label{eq: F is sum of tracks}
            |\overline\calF| = \sum_{\lambda\subseteq \overline\calF\text{ track}} |\lambda|
        \end{equation} 
        and
        \begin{equation}\label{eq: dead-ends of tracks and F}
        \sum_{\lambda \subseteq \overline\calF\text{ track}}\#\{\text{dead-ends of }\lambda\} = \#\{\text{dead-ends of }\overline\calF\}.
        \end{equation}

        Each track $\lambda$ of $\calF$ corresponds to a single edge $f$ in $X$, in particular it is embedded in $Y$ and its set-wise stabilizer in (the torsion-free group) $H$ is trivial. It follows that $\lambda$ embeds in $\overline Y$ under the quotient $Y\to \overline Y$.
        In particular, each track of $\calF$ is compact. 


        By property \ref{quasi geodesic}, each track $\lambda$ in $\calF$ intersects each edge of $Y$ at most once.
        Thus the track $\lambda$ gives rise to the element in $C_c^1(Y)$ which is the set of all edges $\lambda$ meets. By abuse of notation we denote $\lambda \in C_c^1(Y)$ as well.
        This abuse of notation is justified since the track $\lambda$ can be reconstructed (up to a small isotopy that does not pass through vertices of $Y$) from the corresponding 1-cochain in $C_c^1(Y)$. Indeed, there is a regular connector of the track on each edge of the 1-cochain: for a 2-simplex $\Delta$ we connect two regular connectors by a regular segment in $\Delta$ if they are the only two connectors on $\partial \Delta$;  otherwise the regular connectors on $\partial \Delta$ are connected to singular connectors in the interior of $\Delta$.
        Since the number of regular connectors in $\lambda$ is the size of the support of the corresponding 1-cochain, there is no ambiguity in the notation $|\lambda|$.
        
        It is easy to see that the coboundary $\bfd \lambda$ of a track $\lambda\subset \calF$ is supported in the set of 2-simplices that contain a dead-end of the track $\lambda$. In particular,
        \begin{equation}\label{eq: coboundary and deadends}
            |\bfd \lambda|\le \#\{\text{dead-ends of }\lambda\}
        \end{equation}
        At first glance one might suspect that \eqref{eq: coboundary and deadends} should be an equality, however $|\bfd\lambda|$ could be strictly smaller than $\#\{\text{dead-ends of }\lambda\}$. This happens when $\lambda$ meets all three of the sides of a 2-simplex $\Delta$, as there are three dead-ends of $\lambda$ in $\Delta$ but $\Delta$ is only counted once in $|\bfd\lambda|$.

        A word of caution is in order: The last two paragraphs concern only tracks in $\calF$ and not $\overline\calF$. A track in $\overline\calF$ might intersect the same edge of $\overline Y$ multiple times and so does not correspond as nicely to a 1-cochain on $\overline Y$.
        To remedy that, for a track $\overline\lambda$ in $\overline\calF$ we will interpret $|\bfd\overline\lambda|$ as $|\bfd \lambda|$ for some (any) lift $ \lambda$ of $\overline\lambda$ to $\calF$. 
        By doing so, \eqref{eq: coboundary and deadends} remains true also for tracks in $\overline\calF$.

\bigskip

\subsection*{Summary of \Cref{sec: singular patterns}}
Combining \eqref{eq: AD < F},\eqref{eq: F is sum of tracks}, and 
\eqref{eq: coboundary and deadends}, \eqref{eq: dead-ends of tracks and F}, \eqref{eq: deadends < vol}, \eqref{eq: vol < index} we get the following.
\begin{proposition}\label{prop: two halfs of the main inequality}
For $D\ge 0$, the singular pattern $\overline \calF$ defined above satisfies
\begin{equation}\label{eq: two halfs of the main inequality}
    \TD(\Phi)\le  \sum_{\overline\lambda \subseteq \overline\calF}|\overline\lambda| \quad \text {and} \quad \sum_{\overline\lambda \subseteq \overline\calF} |\bfd \overline\lambda| \le \bdelta(D) [G:H]. \qedhere
\end{equation}    
\end{proposition}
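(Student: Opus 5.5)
The plan is to chain together the inequalities already established in this section, treating the two halves of \eqref{eq: two halfs of the main inequality} separately. Throughout, we assume (as in the derivation of \eqref{eq: AD < F}) that $\Phi:Y^1\to X^1$ is a $\phi$-equivariant extension sending each edge to a geodesic path; this costs nothing, since replacing edge images by geodesics can only decrease $\TD$. We also assume $H\le\dot G$, so that $\overline Y$ is simplicial and the counts below are meaningful.

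For the first inequality:
\begin{itemize}
\item Restrict $\Phi$ to $X\subseteq Y$. Since every edge of $X$ is an edge of $Y$, we get $\TD(\Phi|_X)\le\TD_Y(\Phi)$.
\item Since edges go to geodesics, $\TD_Y(\Phi)=\sum_i d(\Phi(\partial e_i))$.
\item By \ref{quasi geodesic}, the path $q(e_i)$ is a simple edge path between the same endpoints. Its number of edges, which is the number of regular connectors on $e_i$, is therefore at least $d(\Phi(\partial e_i))$.
\item Summing over $H$-orbit representatives of edges gives $\TD_Y(\Phi)\le|\overline\calF|$, which is \eqref{eq: AD < F}.
\item Decomposing $\overline\calF$ into its tracks via \eqref{eq: F is sum of tracks} gives the left inequality.
\end{itemize}

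For the second inequality:
\begin{itemize}
\item For each track $\overline\lambda$, interpret $|\bfd\overline\lambda|$ through a lift $\lambda$ to $\calF$. By \eqref{eq: coboundary and deadends}, it is at most the number of dead-ends of $\lambda$.
\item Lifts are embedded and have trivial stabilizer, so the number of dead-ends of $\lambda$ equals the number of dead-ends of $\overline\lambda$.
\item Summing over tracks and using \eqref{eq: dead-ends of tracks and F}, the total is the number of dead-ends of $\overline\calF$.
\item By \eqref{eq: deadends < vol}, this is at most $\bdelta\vol(\overline Y)$. That bound comes from \ref{bounded defect}: each 2-simplex carries at most $\bdelta$ edges of $q(e_1)+q(e_2)+q(e_3)$, each producing at most three dead-ends.
\item Finally, \eqref{eq: vol < index} gives $\bdelta\cdot\bdelta(D)[G:H]=\bdelta(D)[G:H]$.
\end{itemize}

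The only point needing care is the bookkeeping between $\calF$ and $\overline\calF$. Dead-ends and connectors in the quotient must correspond bijectively to those of a chosen lift, and the count per 2-simplex of $\overline Y$ must be well defined. Both follow from $H$ being torsion free, acting freely, and preserving $\calF$, so each $H$-orbit of dead-ends is counted exactly once. Beyond this, the argument is a direct concatenation of the displayed estimates.
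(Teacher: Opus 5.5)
Your proposal is correct and follows the paper's argument exactly. The paper chains \eqref{eq: AD < F} with \eqref{eq: F is sum of tracks} for the first half, and \eqref{eq: coboundary and deadends}, \eqref{eq: dead-ends of tracks and F}, \eqref{eq: deadends < vol} and \eqref{eq: vol < index} for the second, under the same standing assumption that $\Phi$ sends edges of $Y$ to geodesics. Two small points: your extra hypothesis $H\le\dot G$ is not needed, since all counts are over $H$-orbits and $H$ acts freely. Also, the geodesic-edge condition is a genuine hypothesis rather than a free reduction, because $\overline\calF$ depends only on $\Phi|_{Y^0}$ while $\TD(\Phi)$ does not.
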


 \bigskip 
 
To prove inequality \eqref{the right inequality} of \Cref{main theorem} it remains to show that
$$\sum_{\overline\lambda \subseteq \overline\calF}|\overline\lambda| \le  \bdelta \sum_{\overline\lambda \subseteq \overline\calF} |\bfd \overline\lambda|.$$
One way of achieving this inequality is to prove that for each track $\lambda\subset \calF$ we have 
\begin{equation}\label{eq: codim 1 cheeger}
|\lambda|\le \bdelta |\bfd \lambda|.
\end{equation}
While we do not prove exactly this for all tracks, the gist of the argument is to show that this inequality holds  for \textbf{most} of $\overline\calF$.

We discuss \eqref{eq: codim 1 cheeger} and its relation to the classical Cheeger constant in the next section. 
As we said, we do not prove that \eqref{eq: codim 1 cheeger} holds for all tracks but only for most of them. In fact, some tracks might have no dead-ends, and so $|\bfd \lambda|=0$. 
We call such tracks \emph{regular}.
Equivalently, a track is regular if all its connectors are regular. It is called \emph{singular} otherwise.

The following proposition takes care of bounding regular tracks. Its proof is contained in the proof of \cite[Proposition 6.1]{lazarovich2025finite}.
\begin{proposition}\label{prop: bound on regular tracks}
    If $Y$ is one-ended and $\Phi$ minimizes $\TD_Y(\Phi)$ then  
    \begin{equation}\label{eq: regular < vol}
       \sum_{\lambda\subset \overline\calF \text{ is regular}}|\lambda| \le \bdelta \vol(\overline Y)
    \end{equation}
\end{proposition}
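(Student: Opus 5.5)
The idea is to show that a regular track is a closed codimension-one object (a cocycle), so that one-endedness forces it to bound a \emph{finite} 0-cochain. Its length can then be cut down by a local modification of $\Phi$, and minimality of $\TD_Y(\Phi)$ rules this out unless the track is short. The total length of regular tracks is then charged against the volume.

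\textbf{Step 1: regular tracks bound finite sets.} Let $\lambda\subset\calF$ be a regular track, viewed as a 1-cochain in $C^1_c(Y)$. Since $\lambda$ has no dead-ends, $\bfd\lambda=0$. Because $Y$ is one-ended and simply connected, $H^1_c(Y;\bbZ/2)$ vanishes: $H^1_c$ of a simply connected one-ended complex is $H^0$ of the ends modulo constants, which is trivial. Hence $\lambda=\bfd A$ for a unique finite set of vertices $A\subset Y^0$; the complement is infinite, and uniqueness holds because the only cocycles in $C^0$ are constants. Recall that $\lambda$ corresponds to a single edge $f$ of $X$. Every edge $e\in\lambda$ has exactly one endpoint in $A$, and the quasi-geodesic $q(\Phi(\partial e))$ passes through $f$.

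\textbf{Step 2: the geometric picture and the modification.} Since $f$ lies on $q(\Phi(u),\Phi(v))$ precisely for $e=uv\in\lambda$, the set $\Phi(A)$ is "separated" from $\Phi(Y^0\setminus A)$ by the edge $f$ along every bicombing path. I then modify $\Phi$ $H$-equivariantly on the $H$-orbit of $A$; these translates are disjoint, since tracks embed in $\overline Y$. The natural candidate is to send every vertex of $A$ to a bounded neighbourhood of an endpoint of $f$, or better, to "push $\Phi(A)$ across $f$" toward $\Phi$ of the boundary vertices. The goal is to show that if $|\lambda|$ is large relative to a fixed $\bdelta$, this strictly decreases $\TD_Y$. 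The key estimate is that every edge of $\lambda$ has image length at least about its distance through $f$, while after the modification the edges inside $A$ and on $\partial A$ have length bounded by $\bdelta$ plus the old length minus something. The difficulty is that a single collapse does not obviously shorten things. Following the proof of \cite[Proposition 6.1]{lazarovich2025finite}, I would compare with the map that replaces $\Phi|_A$ by its composition with a coarse "nearest point to the far side of $f$" retraction. Using the bounded defect \ref{bounded defect} and hyperbolicity, each edge of $\lambda$ then saves at least $1$, while edges interior to $A$ are not lengthened beyond $\bdelta$ total per regular connector of $\lambda$.

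\textbf{Step 3: counting.} From minimality I expect to obtain that the number of regular connectors lying on tracks of length greater than $\bdelta$ is controlled by $\bdelta\vol(\overline Y)$. Short regular tracks contribute at most $\bdelta$ per edge of $\overline Y$, since each edge carries a bounded number of connectors of short tracks near a given point. Summing these two contributions gives \eqref{eq: regular < vol}.

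The main obstacle is Step 2: producing an equivariant modification whose length saving is genuinely proportional to $|\lambda|$, which requires $A$ to be $H$-disjoint from its translates. Here I would rely on the argument of \cite{lazarovich2025finite} rather than reprove it.
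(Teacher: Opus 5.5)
The paper gives no argument of its own here; it points to the proof of Proposition 6.1 of \cite{lazarovich2025finite}. So deferring is not itself the problem, and your Step 1 ($\lambda=\bfd A$ with $A$ finite, via $H^1_c(Y)=0$) is fine. The problem is that the mechanism you sketch in Steps 2--3 does not work, so the proposal does not contain a proof.

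First, disjointness of the translates $hA$ does not follow from tracks embedding in $\overline Y$. Embedding only says that the connectors of $\lambda$ and $h\lambda$ are distinct points. As cochains, $\lambda$ and $h\lambda$ (tracks of $f$ and $\phi(h)f$) can share an edge $e=uv$ of $Y$; the paper explicitly warns that a track of $\overline\calF$ may meet an edge of $\overline Y$ several times. Then $A$ and $hA$ each contain exactly one endpoint of $e$, and nothing prevents it from being the same endpoint. In that case $A\cap hA\neq\emptyset$ for some $h\neq 1$, and your $H$-equivariant prescription on $HA$ is inconsistent. In the paper's own pushing argument, \Cref{lem: sufficiently congruent are minimal}, disjointness of the translates of $\nu$ comes only from two ingredients: $\nu$ lies in a ball of bounded radius, and $\injrad(\overline Y)$ is bounded below. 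Neither is available for arbitrarily long regular tracks, and the proposition carries no injectivity radius hypothesis.

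Second, the length bookkeeping does not close. Since $G$ is one-ended, $f$ does not coarsely separate $X$, so there is no ``far side of $f$'' to retract onto. The available substitute, used in \Cref{lem: sufficiently congruent are minimal}, is to push $\Phi(A)$ toward an endpoint $y$ of $f$.
\begin{itemize}
\item The cost is an additive $\bdelta$ on every edge with both endpoints in $A$. By the $0$-dimensional Cheeger inequality there can be up to $\bdelta(D)|A|\le\bdelta(D)|\lambda|$ such edges.
\item The saving on an edge $uv\in\lambda$ with $u\in A$ is only guaranteed to be about $\min\{t,d(\Phi(u),y)\}-\bdelta$, and $d(\Phi(u),y)$ can be bounded.
\end{itemize}
Even your own estimates (a saving of $1$ per edge of $\lambda$ against a cost of up to $\bdelta$ per connector) give no strict decrease.

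What makes the paper's analogous argument work is multiplicity. With $N$ congruent tracks meeting the same edges and crossing $f_1,\dots,f_N$ in order, pushing toward $f_N$ saves at least $\tfrac1\kappa N-\bdelta$ per edge. This beats the $\bdelta(D)$ cost once $N\ge\bdelta(D)$. An argument along your lines would need an amplification of this kind:
\begin{itemize}
\item[(a)] extract many parallel regular tracks from large total regular length;
\item[(b)] bound by $\bdelta\vol(\overline Y)$ the regular length not lying in such families;
\item[(c)] secure disjointness of the pushed regions without any injectivity radius bound.
\end{itemize}
Your Step 3 simply asserts (b), in the form ``each edge carries a bounded number of connectors of short tracks''. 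But an edge $\bar e$ carries $|q(e)|$ connectors, which is unbounded. Bounding how many of them lie on regular tracks is exactly the content of the proposition.
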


We will not make direct use of \Cref{prop: bound on regular tracks} in our proof of \Cref{main theorem}. 
However, we find it useful to mention it here, as it helps shift the focus to the task at hand -- bounding the size of the singular tracks.
In addition, as each track is either regular or contains a dead-end, one can bound the number of tracks by combining the bound on regular tracks \eqref{eq: regular < vol} and the bound on dead-ends \eqref{eq: deadends < vol}.

\begin{corollary}[{\cite[Proposition 6.1]{lazarovich2025finite}}]\label{bound on number of tracks}
    If $\Phi$ minimizes $\TD(\Phi)$ then the number of tracks in $\calF$ is at most $ \bdelta \vol(\overline Y)$.
\end{corollary}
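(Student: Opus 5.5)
The plan is to split the tracks of $\overline\calF$ into regular and singular ones and bound their number separately. Every track is either regular (no dead-ends) or singular (at least one dead-end).

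\emph{Singular tracks.} The dead-ends are partitioned among the tracks by \eqref{eq: dead-ends of tracks and F}, and each singular track contains at least one dead-end. Hence the number of singular tracks is at most the total number of dead-ends in $\overline\calF$. By \eqref{eq: deadends < vol} this is at most $\bdelta\vol(\overline Y)$.

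\emph{Regular tracks.} A track has at least one regular connector, since every track is associated with some edge $f$ of $X$ lying in some $q(e)$. Therefore the number of regular tracks is at most $\sum_{\lambda\text{ regular}}|\lambda|$. By \Cref{prop: bound on regular tracks} this sum is at most $\bdelta\vol(\overline Y)$. Adding the two counts gives the claimed bound of $\bdelta\vol(\overline Y)$ tracks.

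\emph{Main obstacle.} The difficulty is in the hypotheses rather than the counting. \Cref{prop: bound on regular tracks} requires that $Y$ is one-ended and that $\Phi$ minimizes $\TD_Y$. The corollary, however, is phrased with $\Phi$ minimizing $\TD$.

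For one-endedness: $G$ is geometrically rigid, so it is one-ended. $Y$ is quasi-isometric to $X$, which is quasi-isometric to $G$, so $Y$ is one-ended as well.

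For the minimization, I would read the statement with $\Phi$ minimizing $\TD_Y$ on $Y^1$, which is the setting in which $\overline\calF$ is defined. Alternatively, I would note that the argument of \cite[Proposition 6.1]{lazarovich2025finite} only uses local minimality, and deduce it from minimality. The local minimality in question is that no $\phi$-equivariant modification of $\Phi$ at an orbit of vertices decreases the total length. If this identification fails, I would fall back on citing \cite[Proposition 6.1]{lazarovich2025finite} directly, as the statement does.
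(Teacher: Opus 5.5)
Your proposal is correct and is the paper's argument. Every track of $\overline\calF$ is either regular, and these are counted by $\sum_{\lambda\text{ regular}}|\lambda|\le\bdelta\vol(\overline Y)$ via \Cref{prop: bound on regular tracks}, or it contains a dead-end, and these are counted by \eqref{eq: deadends < vol}. You are right to read the hypothesis as minimality of $\TD_Y$ and ``tracks in $\calF$'' as tracks of $\overline\calF$, which is what the paper intends, and your one-endedness argument for $Y$ is fine.
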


\section{Connection to higher dimensional Cheeger constants}
\label{sec: cheeger}
Let $Z$ be a simplicial complex. 
Every 0-cochain $\nu\in C^0_c(Z)$ can be viewed as a finite set of vertices in $Z$. Its co-boundary $\bfd\nu$ is the set of edges connecting $\nu$ and its complement $Z^0-\nu$.
For an infinite $Z$, we define its Cheeger constant by
\begin{equation}\label{eq: 0-Cheeger}
h(Z) := \inf_{\nu\in C^0_c(Z)} \frac {|\bfd\nu|}{|\nu|}.
\end{equation}
If a group acts geometrically on a locally finite graph, then its Cheeger constant vanishes if and only if the group is amenable. Non-elementary hyperbolic groups are non-amenable (see \cite{gromov1987hyperbolic}), and so $h(Z)>0$. 
It follows that there exists a constant $c=c(Z)$ for all $\nu\in C^0_c(Z)$ we have $|\nu|\le c |\bfd \nu|$.

The desired inequality \eqref{eq: codim 1 cheeger} alludes to a co-dimension-1 analogue of the co-dimension-0 Cheeger constant $h(Z)$.

\begin{definition}[coboundary Cheeger constant]
 Let $B^{i+1}_c(Z)\subseteq C^{i+1}_c(Z)$ be the set of (compactly supported) $(i+1)$-coboundaries.
For  $\alpha\in B^{i+1}_c(Z)$  let $\MP(\alpha)$ be the size of the minimal primitive of $\alpha$ i.e.
$$\MP(\alpha) := \min_{\lambda\in C_c^i(Z): \alpha = \bfd \lambda}|\lambda|$$
and define the \emph{$i$-th coboundary Cheeger constant} to be
\begin{equation}\label{eq: codim i cheeger}
     h^i(Z) := \inf_{0\ne \alpha\in B^{i+1}_c(Z)}\frac {|\alpha|}{\MP(\alpha)}
\end{equation}
\end{definition}
\begin{remark}
    We have $h(Z) = h^0(Z)$ since if $Z$ is infinite then $\nu\in C^0_c(Z)$ is the unique (finitely supported) primitive of $\bfd\nu$.
\end{remark}

In \cref{sec: theorem b}, 
we will prove that rigid hyperbolic groups have positive co-dimension-1 Cheeger constant:
\begin{restatable}{theoremA}{cheegertheorem}\label{thm: positive 1-cheeger constant}
    If $G$ is a geometrically rigid hyperbolic group, and $G$ acts freely cocompactly on a simply connected simplicial complex $Z$, then $h^1(Z)>0$.
\end{restatable}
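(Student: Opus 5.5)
The plan is to argue by contradiction with a compactness/averaging argument. We combine it with the fact that $\partial G$ has no cut pairs to show that minimal primitives are ``thick'', and hence see the non-amenable geometry of $G$. Suppose $h^1(Z)=0$. Then there are nonzero coboundaries $\alpha_n\in B^2_c(Z)$ with minimal primitives $\lambda_n$ (so $\bfd\lambda_n=\alpha_n$ and $|\lambda_n|=\MP(\alpha_n)$) satisfying $|\alpha_n|/|\lambda_n|\to 0$. We may assume each $\lambda_n$ is ``connected'' in the dual sense: two edges of $\lambda_n$ are adjacent if they lie in a common $2$-simplex. Indeed, splitting $\lambda_n$ into such components splits $\alpha_n$ accordingly, each component is still minimal, and some component has ratio at most the global one. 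By local finiteness and cocompactness, for every $R$ we have $|\lambda|\le \bdelta(R)\,|\bfd\lambda|$ whenever $\lambda\subseteq\calN_R(\operatorname{supp}\bfd\lambda)$. Hence the proportion of edges of $\lambda_n$ at distance $\ge R$ from $\operatorname{supp}\alpha_n$ tends to $1$ for each fixed $R$. So $\lambda_n$ is a Følner-like sequence of ``codimension-$1$ tracks'' whose boundary is negligible.

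First step: pass to an invariant limit object. Choose a uniformly random edge of $\lambda_n$ and translate it by $G$ into a fixed compact fundamental domain. Record the pointed, locally finite $1$-cochain obtained this way. Its weak-$*$ limit points give a probability measure $\mu$ on the compact space of $1$-cochains on $Z$ that are cocycles, are nonzero near the basepoint, and are \emph{locally minimal}: no finite modification by $\bfd\nu$, with $\nu\in C^0_c(Z)$, decreases the support in a ball. Local minimality passes to limits because each $\lambda_n$ is globally minimal and, with high probability, the relevant ball misses $\operatorname{supp}\alpha_n$. The measure $\mu$ is invariant under the natural ``leafwise'' translation equivalence relation, since the boundary proportion tends to $0$. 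In other words, $\mu$ behaves like an amenable (Følner) measured family of infinite locally minimal tracks. Since $Z$ is simply connected, $H^1(Z;\bbZ/2\bbZ)=0$. Hence each limit cochain is $\bfd\nu$ for an infinite $0$-cochain $\nu$. This $\nu$ is a locally minimal cut of $Z$, and both $\nu$ and its complement are unbounded.

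Second step: show such locally minimal cuts are taut and thick. Taut means uniformly close to the convex hull of their limit set $\Lambda\subseteq\partial G$, which separates $\partial G$. I expect to prove tautness by a cut-and-paste argument: a piece of the cut that strays far from the hull can be replaced, using hyperbolicity and a linear isoperimetric filling in the hull's neighborhood, by a strictly smaller piece, contradicting local minimality. Thickness is where geometric rigidity enters. Since $\partial G$ is connected with no cut pair, a closed set separating $\partial G$ cannot be contained in a two-point set, or in a set that is ``locally two points'' at a definite proportion of scales. From this I plan to deduce that the hull of $\Lambda$ branches uniformly. Concretely, within a bounded radius $r_0=\bdelta$ around each point of the cut there are at least three well-separated directions along the cut. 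This makes the cut, as a graph with its induced path metric, uniformly non-amenable with linear isoperimetric inequality. The uniformity should come from compactness of the space of locally minimal cuts modulo $G$, together with the no-cut-pair hypothesis applied to each limit.

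Final step: a uniformly non-amenable family of leaves cannot support the Følner sequence produced above. Pulling back to $\lambda_n$, for large $n$ most of $\lambda_n$ lies in regions that look, up to radius $R$, like thick leaves. A counting argument then bounds $|\lambda_n|$ by a constant times $|\alpha_n|$, contradicting $|\alpha_n|/|\lambda_n|\to0$. The counting argument is a mass transport from each edge of $\lambda_n$ outward along the branching tree of directions toward the boundary. The main obstacle is the thickness step. Excluding cut pairs must give a \emph{quantitative}, uniform branching of every locally minimal cut. Otherwise degenerate ``two-ended'' limits, like a cut quasi-isometric to a line times a tree-less direction, could carry amenable mass. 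My first attempt will be by contradiction: take a sequence of cuts with worse and worse branching, rescale via $G$-translations, and extract a limit cut whose limit set is locally a pair of points. This would exhibit a cut pair in $\partial G$.
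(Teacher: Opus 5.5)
There is a genuine gap, and it sits in the step that is supposed to produce the contradiction.

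\textbf{The contradiction does not come from $\mu$.} That $|\alpha_n|/|\lambda_n|\to 0$ only says a random edge of $\lambda_n$ is far from $\supp\alpha_n$. So the local limits are infinite locally minimal cocycles. It does not make $\mu$ amenable, and it does not make $\lambda_n$ a F\o lner set inside a limit leaf. Finite objects with no boundary at all can converge locally to a uniformly non-amenable space: finite covers of large injectivity radius of a closed hyperbolic surface converge to $\mathbb{H}^2$. So uniform branching, or a linear isoperimetric inequality for the leaves, is not in tension with $\mu$ being a local limit of finite objects with negligible boundary. Your ``final step'' therefore has nothing to contradict.

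\textbf{The limit-cut properties do not pull back.} Closeness to $\conv(\Lambda)$ and branching of $\conv(\Lambda)$, with $\Lambda\subseteq\partial G$, are properties at infinity. Take an edge of $\lambda_n$ at distance $\gg R$ from $\alpha_n$. Which of its ``directions toward the boundary'' actually lead to $\supp\alpha_n$ is invisible in the radius-$R$ windows through which $\lambda_n$ converges to $\mu$. So the mass transport you describe cannot be run from information about the limit.

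\textbf{What is missing.} The count needs a global statement about the finite minimizers themselves, which the proposal never establishes. You need $\lambda_n\subseteq\calN_T(\conv(\alpha_n))$ with $T$ uniform (tautness), and $\alpha_n$ barycentrically thick. With these, your branching-tree transport is exactly the paper's approximating tree and Euler characteristic bound $|\conv(A)|\le\bdelta(M)|A|$ (\Cref{approximating tree,isoperimetric ineq for thick sets}). No limiting object is needed.

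\textbf{How the paper gets tautness.}
\begin{enumerate}
\item It passes to the $2$-skeleton $Y$ of $\Rips_D(Z)$. A retraction $Y\to Z$ exists because $Z$ is simply connected, and it gives $h^1(Y)>0\Rightarrow h^1(Z)>0$.
\item It shows every coboundary has a $\bdelta$-taut primitive (\Cref{lem: primitive in convex hull}), using compactly supported cohomology and the pushing-away-curves property.
\item It runs a maximum-principle argument. Write $\bfd\nu$ as a minimal primitive plus a taut one, and take the vertex $x$ of $\nu$ farthest from $\conv(\alpha)$. Flipping $\bfd x$ shortens the minimal primitive, because by exponential growth most of $\calB_D(x)$ lies farther out (\Cref{minimal is taut}).
\end{enumerate}
Your cut-and-paste ``via a linear isoperimetric filling in the hull's neighbourhood'' is essentially the codimension-one inequality being proved. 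Also, on $Z$ itself a vertex is not adjacent to its whole $D$-ball, which is why the paper works on the Rips complex.

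\textbf{Where no cut pairs enters.} It is used only through a local plumpness lemma, which is uniform without any compactness: $\calB_R(x)\ssm\calN_\tau(\gamma)$ lies in one component of $\calB_R(x)\ssm\calN_\epsilon(\gamma)$ (\Cref{ball minus geodesic is connected,local minimizers are plump}). Tautness plus plumpness then give barycentric thickness of $\bfd\lambda$ (\Cref{coboundary of local minimizers is thick}). Your rescaling argument for branching aims at the analogue of that step. Without tautness of $\lambda_n$ relative to $\alpha_n$, however, it does not reach the inequality $|\lambda_n|\le C|\alpha_n|$.
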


\begin{remark}
    It is worth mentioning as an aside how \cref{thm: positive 1-cheeger constant} differs from a somewhat related (co)homological characterization of hyperbolicity. Gersten \cite{gersten} showed that vanishing of the second $l^\infty$ cohomology for a finitely presented group characterizes hyperbolicity. Further, Allcock and Gersten \cite{allcock-gersten} (in particular the proof of Theorem 4.5 in \cite[p. 731]{allcock-gersten}) showed that a finitely presented group $G$ is (word) hyperbolic if and only if  the first unreduced $l^1$ homology group and the second reduced $l^1$ homology group of $G$ with real coefficients both vanish.  However, in \cref{thm: positive 1-cheeger constant} above, we are using $\bbZ/2\bbZ$ coefficients, and as the remark below shows, $h^1(Z)>0$ \emph{does not} characterize hyperbolicity.
\end{remark}

\begin{remark} \Cref{thm: positive 1-cheeger constant} is false when $G$ splits over $\bbZ$: To see this, consider $\alpha\in C^2_c(Z)$ with $|\alpha|=2$, i.e. $\alpha$ is supported on two simplices, say $\Delta_1,\Delta_2$. If $\alpha$ happens to be a co-boundary then for any primitive $\lambda\in C^1_c(Z)$ with $\alpha = \bfd \lambda$, necessarily, $|\lambda|$ is at least the distance $d(\Delta_1,\Delta_2)$. If $G$ splits over $\bbZ$ then we can find a complex $Z$ which splits over a triangulated strip $[0,1]\times \bbR$. Any two simplices $\Delta_1,\Delta_2$ in the strip give rise to a co-boundary $\alpha = \Delta_1+\Delta_2\in C^2_c(Z)$. So $\frac{|\alpha|}{\MP(\alpha)}\le \frac 2{d(\Delta_1,\Delta_2)}$. By taking $\Delta_1,\Delta_2$ to be arbitrarily far simplices in the strip we get $h^1(Z)=0$.
\end{remark}

Note that \Cref{thm: positive 1-cheeger constant} does not prove that inequality \eqref{eq: codim 1 cheeger} holds for all $\lambda\in C^1_c(Z)$, but rather only for minimal ones:

\begin{definition}
    A 1-cochain $\lambda\in C^1_c(Z)$ is \emph{minimal} (or a \emph{minimizer}) if $|\lambda| = \MP(\bfd\lambda)$, i.e.\ if it is the minimal primitive of its coboundary.
\end{definition}

Using this definition we can reformulate \Cref{thm: positive 1-cheeger constant} as stating that: for $G$ and $Z$ as in the theorem there exists $c=c(Z)$ such that if $\lambda\in C^1_c(Z)$ is minimal then $|\lambda|\le c |\bfd\lambda|.$

\section{Locally minimizing tracks}
\label{sec: locally minimizing}
The goal of this section is to prove that if $\Phi$ minimizes $\TD(\Phi)$ then most of $\overline\calF$ is ``locally minimizing''. 

\subsection{Simple subtracks and the injectivity radius}

\begin{definition}
    A \emph{subtrack} is a connected induced subgraph of $\calF$ or $\overline\calF$.
\end{definition}
    Let $d^*(\cdot,\cdot)$ be the intrinsic path metric on $\overline\calF$ (or $\calF$) as a graph. 
    Note that the $d^*$ distance between connectors in distinct tracks of $\overline\calF$ is $\infty$.
    For a connector $\bfc$ and $R\ge 0$, we denote by $\calB^*_R(\bfc)$ the ball of radius $R$ in $\overline\calF$. 
    This ball is a subtrack of $\calF$, as it is necessarily connected.

    If $e,e'$ are the edges of $Y$ containing the connectors $\bfc,\bfc'$ of $\calF$ respectively, then 
    \begin{equation}\label{eq: d d* relation}
    d(e,e')\le D\cdot  d^*(\bfc,\bfc'), \text{ and }\calB^*_R(\bfc)\subseteq \calN_{D\cdot R}(e).
    \end{equation}
    
\begin{definition}
    A subtrack is \emph{simple} if it meets each edge at most once.
\end{definition}

    In $\calF$, all (sub)tracks are simple, but this is not necessarily the case for $\overline\calF$. 
    The next lemma shows 
    that they are simple for scales which are smaller than the injectivity radius.
    
\begin{definition}
    Let $\bar x$ be a vertex of $\overline Y$, the \emph{injectivity radius $\injrad(\overline Y,\bar x)$ of $\overline Y$ at $\bar x$} is the maximal $R$  such that the quotient map $Y\to\overline Y$ is injective on $\calB(x,R)$ for any lift $x\in Y^0$ of $\bar x$. 
    The \emph{injectivity radius $\injrad(\overline Y)$ of $\overline Y$} is given by $\min_{\bar x\in \overline Y^0}\injrad(\overline Y,\bar x)$.
\end{definition}

\begin{lemma}\label{ball subtracks are simple below injrad}
    Any subtrack of $\overline\calF$ contained in $\calB(\bar x,\injrad(\overline Y,\bar x))$ is simple. 
    In particular, if $\injrad(\overline Y)\ge \bdelta(D,R)$ then for every connector $\bfc$ in $\overline\calF$, the subtrack $\calB_*(\bfc,R)$ is simple.
\end{lemma}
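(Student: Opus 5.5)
The first claim will follow by lifting the subtrack to $\calF$, using the fact that tracks of $\calF$ are simple, and comparing with injectivity of the quotient map on balls. The second claim then follows from the first together with \eqref{eq: d d* relation}.

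\emph{First claim.} Let $\overline\mu$ be a subtrack of $\overline\calF$ contained in $\calB(\bar x,r)$, where $r=\injrad(\overline Y,\bar x)$. Fix a lift $x$ of $\bar x$. Since the quotient $Y\to\overline Y$ is injective on $\calB(x,r)$, the restriction of the quotient to the part of $Y$ spanned by $\calB(x,r)$ (vertices, edges and $2$-simplices all of whose vertices lie in the ball) is injective, and its image is the corresponding part of $\overline Y$ spanned by $\calB(\bar x,r)$. Because $\calF$ is the full preimage of $\overline\calF$, the pattern $\overline\calF$ restricted to that image lifts isomorphically to $\calF$ restricted to the lifted region. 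The subtrack $\overline\mu$ is connected and lies in this region, so it lifts to a connected subgraph $\mu$ of $\calF$ mapping bijectively onto $\overline\mu$, with each edge of $\overline Y$ met by $\overline\mu$ corresponding to a unique edge of $Y$ inside the ball. Suppose $\overline\mu$ met some edge $\bar e$ twice. The two connectors lift to connectors of $\mu$ on the unique lift $e$ of $\bar e$ inside the ball. These lifts lie in a single track of $\calF$, because $\mu$ is connected. But a track of $\calF$ meets each edge at most once by \ref{quasi geodesic}. Distinct connectors of $\overline\mu$ lift to distinct connectors, so this is a contradiction.

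There is one subtlety: "contained in $\calB(\bar x,r)$" must mean that the edges (and $2$-simplices) carrying $\overline\mu$ have all their vertices in the ball. I will adopt this reading, or alternatively shrink $r$ by $D$, which only changes the constants.

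\emph{Second claim.} Let $\bfc$ lie on an edge $\bar e$ with endpoint $\bar x$. By \eqref{eq: d d* relation}, $\calB^*_R(\bfc)\subseteq\calN_{DR}(\bar e)\subseteq\calB(\bar x,DR+D)$, and the simplices it passes through lie within a further distance $D$. If $\injrad(\overline Y)\ge DR+2D=\bdelta(D,R)$, the first claim applies.

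The main obstacle is the bookkeeping in the first claim. One has to check that a connected subtrack lying in the injective region really lifts injectively: the lift of a path in $\overline\calF$ has to stay within the chosen lifted ball. This holds because $\overline\calF$ near the ball is an isomorphic copy of $\calF$ near the lifted ball.
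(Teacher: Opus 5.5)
Your proposal is correct and is essentially the paper's proof. You lift the subtrack into the ball, use that tracks of $\calF$ meet each edge at most once, push back down via injectivity of $Y\to\overline Y$ on the ball, and get the second part from \eqref{eq: d d* relation} with a threshold of the form $\bdelta(D,R)$. Your care about the boundary of the ball goes beyond the paper, which glosses over it. Of your two readings, it is the shrink-$r$-by-$D$ one that makes the lifted region map isomorphically onto the relevant part of $\overline Y$: an edge of $\overline Y$ with both endpoints in $\calB(\bar x,r)$ can lift at one endpoint to an edge leaving $\calB(x,r)$. This only affects constants.
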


\begin{proof}
    Let $\rho = \injrad(\bar x,\overline Y)$, and let $\overline\lambda$ be a subtrack contained in $\calB_\rho(\bar x)$. Let $x$ be a lift of $\bar x$ to $Y$. Since any subtrack of $\calF$ is simple, the lift $\lambda$ of the subtrack $\overline\lambda$ to $\calB_\rho(x)$ is simple. 
    The quotient map is injective on $\calB_\rho(x)$ so $\overline\lambda$ is also simple in $\overline Y$.
    
    By \eqref{eq: d d* relation}, the subtrack $\calB^*_R(\bfc)$ is contained in the ball $\calB_{D\cdot R+D}(x)$ for the vertex $x$ incident on the edge $e$ containing $\bfc$. This implies the second part of the lemma when $\injrad(\overline Y) \ge D\cdot R+D =\bdelta(D,R)$.
\end{proof}

\subsection{The congruence locus}

\begin{definition}
    A subtrack is \emph{regular} if it consists only of regular connectors and segments.
    Two regular simple subtracks $\lambda_0,\lambda_1$ in $\overline\calF$ are \emph{congruent} if they meet the same edges of $\overline Y$.
    A regular simple subtrack $\lambda'$ is \emph{$N$-congruent} if $\overline\calF$ contains $N$ distinct regular simple subtracks congruent to $\lambda'$.
\end{definition}

\begin{definition}
Let $R\ge 0$ and $N\in \bbN$, let $\bfc$ be a connector of a track $\overline\calF$.  Then $\overline\calF$ is \emph{$R$-locally $N$-congruent at $\bfc$} if the ball $B^*_R(\bfc)$ is $N$-congruent.

The \emph{$R$-locally $N$-congruence locus} $\overline\calF_{R,N}$ is the induced subgraph of $\overline\calF$ on the set of $R$-locally $N$-congruent connectors.
\end{definition}

The next lemma says that ``most'' of $\overline\calF$ is contained in the $R$-locally $N$-congruence locus.

\begin{lemma}\label{stack locus is big}
    Let $N\in \bbN, D\ge 0$ and $R\ge 0$ and assume $\injrad(\overline Y) \ge \bdelta(D,R)$. Then 
    \begin{equation}\label{eq: size of congruent locus}
    |\overline\calF \setminus \overline\calF_{R,N}|\le \bdelta(D,R)\cdot N\cdot  \vol(\overline Y).
    \end{equation}
\end{lemma}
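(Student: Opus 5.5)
We bound the complement $\overline\calF\setminus\overline\calF_{R,N}$ by splitting its connectors into two classes: those whose $R$-ball is not regular, and those whose $R$-ball is regular but belongs to a congruence class with fewer than $N$ members. Throughout, we assume $\injrad(\overline Y)\ge\bdelta(D,R)$, so that by \Cref{ball subtracks are simple below injrad} every ball $\calB^*_R(\bfc)$ is a simple subtrack.

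\textbf{Singular balls.} Suppose $\calB^*_R(\bfc)$ contains a dead-end. Then $\bfc$ lies within $d^*$-distance $R$ of some dead-end connector. The valence of vertices in $\calF$ is bounded, since each connector lies on an edge of $Y$ or inside a 2-simplex, and the number of segments per simplex incident to a given connector is bounded. So a $d^*$-ball of radius $R$ contains at most $\bdelta(D,R)$ connectors, and the number of connectors near a given dead-end is at most $\bdelta(D,R)$. By \eqref{eq: deadends < vol}, the total contribution of this class is at most $\bdelta(D,R)\vol(\overline Y)$.

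\textbf{Regular balls in small classes.} Suppose $\calB^*_R(\bfc)$ is regular and simple. Being a simple subtrack, it corresponds to a set of edges of $\overline Y$ of size at most $\bdelta(D,R)$ (each connector lies on one edge, and there are boundedly many). This set is contained in $\calN_{DR+D}(e)$, where $e$ is the edge containing $\bfc$. We charge $\bfc$ to the pair consisting of $e$ and the congruence class of $\calB^*_R(\bfc)$. The congruence class is determined by a subset of the edges in the $(DR+D)$-neighbourhood of $e$. Since $\injrad$ is large, that neighbourhood is isomorphic to a ball in $Y$, which has $\bdelta(D,R)$ edges. Hence, for each edge $e$, there are at most $2^{\bdelta(D,R)}=\bdelta(D,R)$ possible congruence classes.

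Fix an edge $e$ and a class $\kappa$ that is not $N$-congruent, i.e.\ fewer than $N$ distinct regular simple subtracks are congruent to it. The connectors $\bfc$ on $e$ whose ball lies in $\kappa$ have pairwise distinct balls: two distinct connectors on the same edge have distinct balls as subtracks, since they are distinct vertices of $\overline\calF$ and a simple ball contains exactly one connector on $e$. Hence there are fewer than $N$ such connectors. Summing over the at most $\bdelta(D,R)$ classes per edge and over the at most $\bdelta(D)\vol(\overline Y)$ edges gives a bound of $\bdelta(D,R)\cdot N\cdot\vol(\overline Y)$.

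Adding the two classes yields \eqref{eq: size of congruent locus}. The main obstacle is the injectivity in the second class: we must make sure that a ball $\calB^*_R(\bfc)$ determines $\bfc$ once $e$ is fixed, and that "distinct subtracks" is read as distinct subgraphs. If connectors on $e$ could share the same ball, we would instead note that each subtrack contains at most one connector per edge by simplicity. In that case the number of connectors on $e$ in class $\kappa$ is still bounded by the number of distinct subtracks in $\kappa$, which is less than $N$.
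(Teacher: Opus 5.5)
Your proof is correct and follows essentially the same route as the paper. Both split the bad connectors into those whose $R$-ball is non-regular, bounded via the global dead-end count and bounded valence, and those whose ball is regular but not $N$-congruent, bounded by the number of congruence classes in a region of radius depending on $D,R$ times $N$. The differences are bookkeeping: you charge $\bfc$ to the edge containing it rather than to a ball $\calB_\rho(\bar x)$, and you spell out something the paper leaves implicit, namely that $\bfc\mapsto\calB^*_R(\bfc)$ is injective on the connectors of a fixed edge because the balls are simple.
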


\begin{proof}
    If $\rho= \bdelta(D,R)$, then for every connector $\bar\bfc$ of $\overline\calF$, the subtrack $\calB^*_R(\bfc)$ is a subtrack in $\calB_\rho(\bar x)$ for some $\bar x \in \overline Y^0$.
    If we assume $\injrad(\overline Y)\ge \rho$ then by \Cref{ball subtracks are simple below injrad}, $\calB^*_R(\bar \bfc)$ is simple for every connector $\bar \bfc$ in $\overline\calF$.

    A connector $\bar \bfc$ belongs to $ \overline\calF \setminus \overline\calF_{R,N}$ if $\calB^*_R(\bar\bfc)$ is either not regular or not $N$-congruent. We give a bound in each case.

    The number of dead-ends in $\calB_\rho(\bar x)$ is bounded by $\bdelta(D,\rho)$ since the number of dead-ends in a 2-simplex is bounded by $\bdelta$. 
    Thus, the number of subtracks in $\calB_\rho(\bar x)$ that are not regular is bounded by $\bdelta(D,\rho)$.
    It follows that the number of $\bar\bfc$ for which $\calB^*_R(\bar \bfc)$ is not regular is bounded by $\bdelta(D,R)\vol(\overline Y)$.

    The number $P$ of congruence classes of simple subtracks in $\calB_\rho(\bar x)$ is bounded by $\bdelta(D,\rho)$.
    Therefore, at most $\bdelta (D,\rho)\cdot N$ of them are not $N$-congruent.
    It follows that the number of connectors $\bar \bfc$ for which $\calB^*_R(\bar \bfc)$ is regular but not $N$-congruent is at most $ \bdelta(D,\rho)\cdot N \cdot \vol(\overline Y)$.
\end{proof}

\subsection{The congruence locus is locally minimizing}

\begin{lemma}\label{paths sharing edges}
    For all $\kappa\ge 0$, and $N\ge \bdelta(\kappa)$, the following holds.
    Let $\gamma$ (resp. $\gamma'$) be a simple $\kappa$-quasigeodesic path in $X$, and assume that it passes through the edges $f_1,\dots,f_N$ (resp. $f'_1,\dots,f'_N$) in that order. 
    If $\{f_1,\dots,f_N\} = \{f'_1,\dots,f'_N\}$ then either:
    \begin{enumerate}[label = (\roman*)]
        \item \label{option 1} $d(f_1,f_1')\le \bdelta(\kappa)$ and $d(f_N,f_N')\le \bdelta(\kappa)$, or
        \item $d(f_1,f_N')\le \bdelta(\kappa)$ and $d(f_N,f_1')\le \bdelta(\kappa)$.
    \end{enumerate}
    If moreover, $\gamma,\gamma'$ share an endpoint then \ref{option 1} holds. 
\end{lemma}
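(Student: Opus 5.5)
We plan to exploit that a simple $\kappa$-quasigeodesic passing through many edges in a prescribed order is essentially determined, up to bounded error, by the set of those edges. The order along it is then recovered up to reversal from the metric geometry of that set.

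\textbf{Setup.} Let $F=\{f_1,\dots,f_N\}$. Since $\gamma$ is simple and $\kappa$-quasigeodesic, the edges $f_i$ occur at positions $t_1<\dots<t_N$ along $\gamma$ with $t_j-t_i\ge j-i$. Hence
\[
d(f_i,f_j)\ge \tfrac1\kappa|t_i-t_j|-\kappa\ge \tfrac1\kappa|i-j|-\kappa.
\]
Consequently $\diam F\ge N/\kappa-\bdelta(\kappa)$, which is large once $N\ge\bdelta(\kappa)$.

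\textbf{The two extremes of $\gamma$ are near-diametral.} By the Morse lemma, the subpath of $\gamma$ between $f_1$ and $f_N$ lies in a $\bdelta(\kappa)$-neighbourhood of a geodesic $[f_1,f_N]$, and contains all of $F$. Let $\pi$ denote closest-point projection to this geodesic. Every $f\in F$ projects within $\bdelta(\kappa)$ of itself, so $d(f_1,f_N)\ge \diam F-\bdelta(\kappa)$. Symmetrically, $F$ lies in a $\bdelta(\kappa)$-neighbourhood of $[f'_1,f'_N]$. Moreover $d(f'_1,f'_N)\ge\diam F-\bdelta(\kappa)$.

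\textbf{Near-diametral pairs in a quasi-segment are close to the ends.} We now use that $F$ lies near the segment $[f_1,f_N]$, with $f_1,f_N$ at its ends. Suppose $a,b\in F$ satisfy $d(a,b)\ge d(f_1,f_N)-\bdelta(\kappa)$. Their projections onto $[f_1,f_N]$ are then at distance at least $d(f_1,f_N)-\bdelta(\kappa)$. This forces one projection to lie within $\bdelta(\kappa)$ of the endpoint $f_1$ and the other within $\bdelta(\kappa)$ of $f_N$. Applying this to $a=f'_1$, $b=f'_N$ gives $\{d(f'_1,f_1),d(f'_N,f_N)\}$ or $\{d(f'_1,f_N),d(f'_N,f_1)\}$ bounded by $\bdelta(\kappa)$. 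These are exactly options (i) and (ii).

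\textbf{Shared endpoint.} Suppose $\gamma,\gamma'$ both start at a vertex $p$; the case where $\gamma$ ends where $\gamma'$ starts is symmetric after reparametrizing. Along $\gamma$ the edges appear in order, so the quasigeodesic inequality gives $d(p,f_1)\le d(p,f_N)-\tfrac1\kappa N+\bdelta(\kappa)$. The same holds for $\gamma'$: $d(p,f'_1)\le d(p,f'_N)-\tfrac1\kappa N+\bdelta(\kappa)$. If (ii) held, then $f'_1$ would be near $f_N$ and $f'_N$ near $f_1$. We would get $d(p,f_N)\le d(p,f_1)-\tfrac1\kappa N+\bdelta(\kappa)$, contradicting the first inequality once $N\ge\bdelta(\kappa)$. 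So (i) holds, with $N$ taken large enough for all steps.

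The main obstacle is the third step: making the projection argument clean with only quasigeodesic control. This is routine thin-triangle bookkeeping once the Morse lemma is invoked.
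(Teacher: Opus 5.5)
Your proof of the dichotomy is correct, and it is essentially the paper's argument made explicit. The paper only says that, by the Morse lemma, the subpath of $\gamma$ between $f_1,f_N$ and the subpath of $\gamma'$ between $f'_1,f'_N$ each fellow-travel a subpath of the other, and then that ``the lemma follows''. Your finish is cleaner: $F$ lies near $[f_1,f_N]$, and $\{f'_1,f'_N\}$ is a near-diametral pair of $F$, so its two elements project near the two ends of $[f_1,f_N]$. This part needs no lower bound on $N$.

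Your treatment of the case where both paths start at $p$ is also fine, with one correction. The inequality $d(p,f_1)\le d(p,f_N)-\tfrac1\kappa N+\bdelta(\kappa)$ does not follow from the quasigeodesic inequality alone, because the multiplicative constant spoils additive comparisons. You need the Morse lemma for the subpath of $\gamma$ from $p$ to $f_N$, which puts $f_1$ within $\bdelta(\kappa)$ of $[p,f_N]$. Combine that with $d(f_1,f_N)\ge\tfrac1\kappa N-\bdelta(\kappa)$.

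The genuine error is your claim that the case where $\gamma$ ends where $\gamma'$ starts is symmetric after reparametrizing. Reversing one of the two paths reverses its enumeration of $F$, which interchanges options (i) and (ii). So in that mixed configuration your argument gives (ii), not (i). In fact (i) is false there: if $\gamma'$ is the reversal of a geodesic $\gamma$, then $f'_1=f_N$ and $f'_N=f_1$, so $d(f_1,f'_1)=d(f_1,f_N)\ge N-2$.

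Hence ``share an endpoint'' must be read as ``have the same initial vertex or the same terminal vertex''. This is how the lemma is used in the Claim in the proof of \Cref{lem: sufficiently congruent are minimal}. There every $q(e')$ is oriented from the endpoint of $e'$ lying in $\nu$, so consecutive edges of $\lambda$ share a vertex in the same role. The correct reduction is: if both paths end at $p$, reverse \emph{both}. This reverses both enumerations, leaves option (i) invariant, and brings you to the case you treated. With that replacement your proof is complete.
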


\begin{proof}
Consider the subpath $\eta$ of $\gamma$ between $f_1,f_N$, and the subpath $\eta'$ of $\gamma'$ between $f_1',f_N'$.
By our assumption $f_1,f_N$ are edges of $\eta'$, and so by the Morse lemma, $\eta$ is $\bdelta(\kappa)$ close to a subpath of $\eta'$.
Similarly, $\eta'$ is $\bdelta(\kappa)$ close to a subpath of $\eta$. 
The lemma follows.
\end{proof}

\begin{definition}
    Let $R\ge 0$. A connector $\bfc\in \calF$ is \emph{$R$-locally minimizing} if $\calB^*_R(\bfc)$ is minimal.
\end{definition}

The following is the key result of this section, it shows that if $\Phi$ minimizes $\TD_Y(\Phi)$ then the congruence locus is locally minimizing. 
Essentially the idea is that if a subtrack of the congruence locus is not minimal, then it cobounds a region with a minimizer of its coboundary (see \cref{fig:placeholder}). A local change to $\Phi$ in this region will reduce $\TD_Y(\Phi)$, contradicting minimality.

\begin{lemma}\label{lem: sufficiently congruent are minimal}
    Let $D,R\ge 0$. If $\injrad(\overline Y)\ge \bdelta(D,R)$ and $N\ge \bdelta(D,R)$, then for every connector $\bar\bfc\in \overline\calF_{R,N}^0$, if $\Phi$ minimizes $\TD_Y(\phi)$, then $\bar \bfc$ is $R$-locally minimizing.
\end{lemma}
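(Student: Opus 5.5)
We argue by contradiction and modify $\Phi$ locally. Suppose $\bar\bfc\in\overline\calF_{R,N}^0$ is not $R$-locally minimizing. Write $\overline\mu=\calB^*_R(\bar\bfc)$. By \Cref{ball subtracks are simple below injrad}, and since $\injrad(\overline Y)\ge\bdelta(D,R)$, the subtrack $\overline\mu$ is simple and regular. It lies in a ball $\calB_\rho(\bar x)$ with $\rho=\bdelta(D,R)$, on which the quotient map is injective, so we may work in a lift $\mu\subseteq\calB_\rho(x)\subseteq Y$. Non-minimality gives $\mu'\in C^1_c(Y)$ with $\bfd\mu'=\bfd\mu$ and $|\mu'|<|\mu|$. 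We take $\mu'$ to be a minimizer.

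First we localize $\mu'$. We want $\mu'$ supported in $\calB_{\rho'}(x)$ with $\rho'=\bdelta(D,R)$. One option is to replace $\mu'$ by a minimizer relative to a large ball, using that every cocycle in a ball of $Y$ is a coboundary there. This holds once $D$ is large enough that $\Rips_D(X)$ is contractible, or at least simply connected, which we may assume. Then $\mu+\mu'$ is a cocycle supported in the ball, so $\mu+\mu'=\bfd\nu$ for a $0$-cochain $\nu$ supported in $\calB_{\rho'}(x)$. We take the finitely supported choice, which exists since $Y$ is one-ended. The set $\nu$ is the ``region'' cobounded by $\mu$ and $\mu'$, and $|\nu|\le\bdelta(D,R)$.

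Next we use congruence. Since $\overline\mu$ is $N$-congruent, there are $N$ distinct regular simple subtracks $\overline\mu_1,\dots,\overline\mu_N$ meeting exactly the edges of $\overline\mu$. Lift them to $\mu_1,\dots,\mu_N$ in $\calB_\rho(x)$; these are tracks for distinct edges $f_1,\dots,f_N$ of $X$. Every edge $e$ in the support of $\mu$ therefore has all of $f_1,\dots,f_N$ in $q(e)=q(\Phi(\partial e))$. Apply \Cref{paths sharing edges}, using simplicity and $\bdelta$-quasigeodesicity of $q(e)$, to the paths $q(e)$ for $e\in\mu$; this is where $N\ge\bdelta(D,R)$ enters. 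It shows that the $f_i$ form a ``thick bundle'' of parallel edges, and that $\Phi$ maps the vertices of $\nu$ to one side of the bundle and the vertices of $\calB_{\rho'}(x)\setminus\nu$ to the other side. Regularity of the $\mu_i$ (no dead-ends) means each $\mu_i$ separates consistently in each $2$-simplex. So the bundle cuts $X$ near these points into two sides $A$ and $B$, with $\Phi(\nu)\subseteq A$ side and the neighbouring vertices mapped into $B$ side, roughly.

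Now modify $\Phi$ on the $H$-orbit of $\nu$. Choose a vertex $b$ across the bundle, for instance an endpoint of $f_N$ on the far side, and define $\Phi'$ by pushing $\Phi(v)$ for $v\in\nu$ across all $N$ bundle edges, e.g. $\Phi'(v)=$ a nearby point on the other side. Extend $\phi$-equivariantly; this is well defined because $\nu$ embeds in $\overline Y$ by the injectivity radius. Edges of $\mu$ had $q$-paths crossing all $N$ bundle edges, so their lengths drop by roughly $N$. Edges of $\mu'$ start crossing the bundle, so their lengths grow by at most roughly $N+\bdelta$. Edges internal to $\nu$ change by at most $\bdelta$, and there are at most $\bdelta(D,R)$ of them. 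Hence
\[\TD_Y(\Phi')-\TD_Y(\Phi)\le -N(|\mu|-|\mu'|)+\bdelta(D,R)\le -N+\bdelta(D,R)<0\]
for $N\ge\bdelta(D,R)$, which contradicts minimality.

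The main obstacle is making the pushing step precise. We must show that all edges of $\mu$ genuinely lose about $N$ in length, which uses \Cref{paths sharing edges} to see that the bundle is crossed in a consistent direction. We must also show that edges of $\mu'$ and edges interior to $\nu$ gain only $N+\bdelta$ and $\bdelta$ respectively. I would first try to define $\Phi'(v)$ as a point of $q$ just beyond $f_N$ and control everything with the Morse lemma.
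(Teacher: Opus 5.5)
Your overall plan matches the paper's: localize a region $\nu$ with $\bfd\nu=\mu+\mu'$, use the $N$ congruent copies to get a bundle $f_1,\dots,f_N$ crossed by every $q(e)$ with $e\in\mu$, then modify $\Phi$ on $H\nu$ to lower $\TD_Y$. But the pushing step, which you flag as the main obstacle, fails as you propose it.

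Your final inequality $\TD_Y(\Phi')-\TD_Y(\Phi)\le -N(|\mu|-|\mu'|)+\bdelta(D,R)$ needs two things, because all you know is $|\mu'|\le|\mu|-1$ as cardinalities. Every edge of $\mu$ must shrink by some fixed amount $t$, and every edge of $\mu'$ must grow by at most that same $t$, each up to an additive constant. Sending $\Phi(v)$, $v\in\nu$, to ``a point of $q$ just beyond $f_N$'' does not give this, for two reasons.

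\begin{itemize}
\item The bundle only controls the $\nu$-endpoints of edges of $\mu$. For a vertex $v\in\nu$ incident only to edges of $\mu'$, or interior to $\nu$, nothing ties $\Phi(v)$ to the bundle. An edge $[v,w]$ of $\mu'$ can then grow by as much as $d(\Phi(v),\Phi'(v))$, which is not bounded by $N+\bdelta$.
\item Even for the $\nu$-endpoints of $\mu$-edges, the gain is about $d(\Phi(v),f_N)$. This is only bounded below by $N/\bdelta$ and varies from edge to edge, so gains on $\mu$ and losses on $\mu'$, measured at different vertices, need not cancel.
\end{itemize}

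Relatedly, the bundle does not separate $X$. There is no reason for $\Phi(\nu)$ to lie on one ``side'' of it, and the claim that $\Phi(\calB_{\rho'}(x)\setminus\nu)$ lies on the other side is false in general.

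The missing idea is a uniform, truncated displacement toward a single point. Fix an endpoint $y$ of $f_N$ and set $t=N/\kappa$, where $\kappa=\bdelta$ is chosen so that $d(\Phi(v),y)\ge t$ for every $\nu$-endpoint $v$ of an edge of $\mu$. For $v\in\nu$, let $\Psi(v)$ be the point at distance $\min\{t,d(\Phi(v),y)\}$ from $\Phi(v)$ along $[\Phi(v),y]$, and extend $\phi$-equivariantly. Then:
\begin{itemize}
\item each edge of $\bfd\nu$ not in $\mu$ grows by at most $t$, by the triangle inequality;
\item each edge inside $\nu$ grows by at most $\bdelta$, by thinness of the triangle $\Phi(v),\Phi(w),y$;
\item each edge $[v,w]$ of $\mu$ with $v\in\nu$ shrinks by at least $t-\bdelta$.
\end{itemize}

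The third bullet is exactly what the bundle must deliver: $y$ must be $\bdelta$-close to $[\Phi(v),\Phi(w)]$ for every such edge. Equivalently, orienting each $q(e)$, $e\in\mu$, from its $\nu$-endpoint, the last bundle edge it crosses must be within $\bdelta$ of $f_N$. The paper proves this by induction along the connected track, applying \Cref{paths sharing edges} to consecutive edges sharing an endpoint, which forces option (i). It rules out the flipped alternative because $d(f_1,f_N)\ge N/\bdelta$.

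With this in place, the total change is at most $-(|\mu|-|\mu'|)t+\bdelta(D,R)\le -N/\kappa+\bdelta(D,R)$, which is negative once $N\ge\bdelta(D,R)$.
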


\begin{proof}
We will show that any subtrack $\overline\lambda\subseteq \calB^*_R(\bar\bfc)$ is minimizing (and get in particular that $\overline\lambda= \calB^*_R(\bar\bfc)$ is minimizing). 
Consider its lift $\lambda$ to $Y$.
By the definition of the congruence locus, $\overline\lambda$ (and so $\lambda$) is a regular, simple, $N$-congruent subtrack.
Let $\lambda_1,\dots,\lambda_N$ be the $N$ congruent copies of $\lambda$.
See \Cref{fig:placeholder}
\begin{figure}
    \centering
    \includegraphics[width=\textwidth]{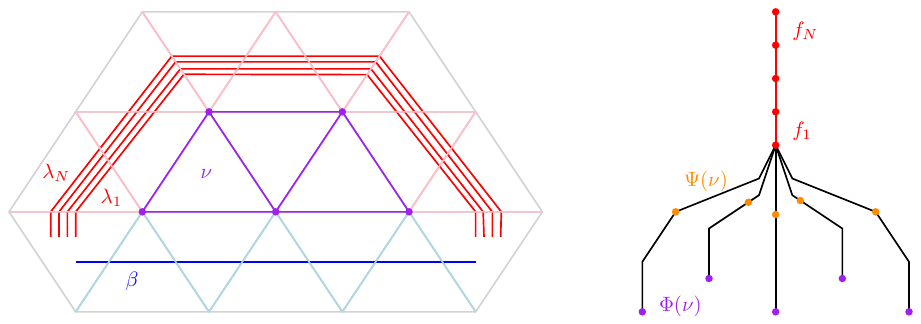}
    \caption{The complex $Y$ is on the left: the congruence tracks $\lambda_1,\dots,\lambda_N$ are shown in red and their edges in pink; a minimizing track $\beta$ is shown in blue and its edges in light blue; and the region $\nu$ is in purple.
    The space $X$ is on the right: The edges $f_1,\dots,f_N$ corresponding to $\lambda_1,\dots,\lambda_N$ are in red; the image of $\nu$ under $\Phi$, in purple, has to be far away from $f_i$, while its image under $\Psi$, in orange, is closer, and thus $\Psi$ has lower total distortion.}
    \label{fig:placeholder}
\end{figure}

Let $\beta$ be a minimizer with $\bfd \beta = \bfd \lambda$ and assume for the sake of contradiction that $|\beta|<|\lambda|$.
By replacing $\lambda$ by $\lambda - (\lambda\cap\beta)$ and $\beta$ by $\beta - (\lambda\cap \beta)$ we may assume that  $\lambda\cap \beta=\emptyset$.

The 1-cochain $\beta+\lambda$ is a 1-cocycle. Since $Y$ is one-ended $H^1_c(Y)=0$, and so there exists $\nu\in C^0_c(Y)$ such that $\bfd \nu=\beta+\lambda$.
For $\rho\ge \bdelta(D,R)$ the 0-cochain $\nu$ is supported in a ball $\calB(x,\rho)$ for some $x\in Y^0$.
Therefore, assuming $\injrad(\overline Y)>\rho$ we get that $\nu$ injects into $\overline Y$ 
under the quotient map, or in other words for all $h\in H$ we have $h \nu\cap \nu=\emptyset$.

Fix some edge $e_0$ in $\lambda$, and let $v_0,w_0$ be its endpoints such that $v_0\in \nu$ and $w_0\notin \nu$. 
Let $q(e_0) = q(\Phi(v_0),\Phi(w_0))$ be the $\bdelta$-quasigeodesic edge path given by the bicombing 
$q$, oriented from $\Phi(v_0)$ to $\Phi(w_0)$.

By construction of $\calF$ each $\lambda_i$ meets the edge $e_0$ in a connector corresponding to an edge $f_i$ in $q(e_0)$.
Up to reordering $\lambda_1,\dots,\lambda_N$ we may assume that $q(e_0)$ passes through $f_1,\dots,f_N$ in that order.

If $e'$ is any other edge in $\lambda$ then since $\lambda_1,\dots,\lambda_N$ are congruent, the edge path $q(e')$ passes through the same edges $f_1,\dots,f_N$. However, it might be that the order is changed. Re-enumerate $\{f'_1,\dots,f'_N\} = \{f_1,\dots,f_N\}$ such that $q(e')$ passes through $f'_1,\dots,f'_N$ in that order.

\begin{claim*} For $N\ge \bdelta$ if $e'$ is an edge of $\lambda$ and $f'_N$ is defined as above then 
\begin{equation}\label{eq: endpoint are close} d(f_N,f'_N)\le \bdelta\end{equation}
\end{claim*}
\begin{proof}[Proof of claim]
Since the subtrack $\lambda$ is connected, there is a sequence of edges $e_0,e_1,\dots,e_m=e'$ in $\lambda$ such that each consecutive pair shares an endpoint. For $0\le j \le m$ re-enumerate $\{f_1^j,\dots,f_N^j\} = \{f_1,\dots,f_N\}$ such that $q(e_j)$ passes through $f_1^j,\dots,f_N^j$ in that order. We will show by induction on $0\le j\le m$ that $d(f_N, f^j_N)\le\bdelta$. This proves the claim since $f'_N = f^m_N$.

The base case $j=0$ is clear since $f_N = f^0_N$. 
For the inductive step, assume that $d(f_N,f^j_N)\le \bdelta$. 
The $\bdelta$-quasigeodesic paths $q(e_j),q(e_{j+1})$ share an endpoint, so by \Cref{paths sharing edges}, we have $d(f_N^j,f_N^{j+1})\le \bdelta$. 
By \Cref{paths sharing edges} for the paths $q(e)$ and $q(e_{j+1})$, we have either $d(f_N,f_N^{j+1})\le \bdelta$ or $d(f_1,f_N^{j+1})\le \bdelta$. It suffices to show that the latter case is impossible. And indeed, since $q(e)$ is a $\bdelta$-quasigeodesic, we get a contradiction $$\tfrac 1 \bdelta N \le d(f_1,f_N) \le d(f_1,f_N^{j+1}) + d(f_N^{j+1},f_N^j)+1\le \bdelta$$ when $N\ge \bdelta$.
\end{proof}


Let $y \in X^0$ be an endpoint of the edge $f_N$.
Then by \ref{quasi geodesic}, $d(\Phi(v_0),y) \ge \tfrac 1\bdelta N$.

It follows from the claim that for every edge $e'$ in $\lambda$, if $v$ denotes the endpoint of $e'$ which is in $\nu$, then there exists $\kappa = \bdelta$ such that for $N\ge \bdelta$
\begin{equation}\label{eq: lower bound on length of edges of lambda'}
    d(\Phi(v),y) \ge d(\Phi(v),f'_N)-d(f_N',y)\ge \tfrac 1\bdelta N - \bdelta\ge \tfrac 1\kappa N.
\end{equation}

We will get a contradiction by constructing a $\phi$-equivariant map $\Psi:X\to X$ with $\TD_Y(\Psi)<\TD_Y(\Phi)$.
Define $\Psi:X\to X$ as follows:
Outside the $H$ translates of $\nu$, we set $\Psi|_{X-H\nu}=\Phi|_{X-H\nu}$. 
For each $v\in \nu$ we set $\Psi(v) =y$ if $d(v,y)<\tfrac 1\kappa N$, otherwise, we set $\Psi(v)$ to be the point on the geodesic $[\Phi(v),y]$ at distance $\tfrac 1\kappa N$ away from $\Phi(v)$.
We then extend $\Psi$ in a $\phi$-equivariant way to all of $X$, such that $\Psi$ maps edges to geodesic paths.

Let us compute $\TD_Y(\Psi)$. We do so by partitioning the possible orbits of edges into 4 sets of edges:

\noindent\textbf{Edges of $\lambda$:} 
Let $e$ be an edge of $\lambda$, with endpoints $v,w$ such that $v\in \nu$ and $w\notin \nu$. By definition $\Psi(w) = \Phi(w)$, and $\Psi(v)$ is obtained by shifting $\Phi(v)$ a distance $\tfrac 1\kappa N$ towards $y$. Therefore, 
\begin{align*}\ell(\Psi(e)) &= d(\Psi(v),\Psi(w))\\
&\le  d(\Psi(v),y)+d(y,\Phi(w))\\
& = d(\Phi(v),y) -\tfrac 1 \kappa N + d(y,\Phi(w))\\
&\le d(\Phi(v),\Phi(w))+\bdelta - \tfrac 1\kappa N\\
& =\ell(\Phi(e)) + \bdelta - \tfrac 1\kappa N
\end{align*}
where the second to last inequality follows from the claim since the point $y$ is at distance $\bdelta$ away from the quasigeodesic path $q(e)$ and so (by the Morse lemma) at distance $\bdelta$ away from the geodesic $[\Phi(v),\Phi(w)]$. 
\begin{equation}
\sum_{e\in\lambda} \ell(\Psi(e)) \le \sum_{e\in\lambda} \left(\ell(\Phi(e))-\tfrac 1\kappa N + \bdelta\right) = \left(\sum_{e\in\lambda} \ell(\Phi(e)\right) - |\lambda|\cdot \tfrac 1 \kappa N + \bdelta |\lambda|.
\end{equation}

\noindent\textbf{Edges of $\beta$:} Let $e$ be an edge of $\beta$, let $v\in \nu$ and $w\notin \nu$ be its endpoints. Then
\begin{align*}
\ell(\Psi(e)) &= d(\Psi(v),\Psi(w))\\
&\le  d(\Psi(v),\Phi(v))+d(\Phi(v),\Phi(w))\\
& \le \tfrac 1 \kappa N + \ell(\Phi(e)).
\end{align*}
Hence
\begin{equation}
\sum_{e\in\beta} \ell(\Psi(e)) \le \sum_{e\in\beta} \left(\ell(\Phi(e))+\tfrac 1\kappa N\right) = \left(\sum_{e\in\beta} \ell(\Phi(e))\right)+|\beta|\cdot \tfrac 1\kappa N
\end{equation}

\noindent\textbf{Edges in $\nu$:} If $e$ has both endpoints $v,w\in \nu$ then since the triangle $\Phi(v),\Phi(w),y$ is $\bdelta$-thin, we get
\begin{equation*}
    \ell(\Psi(e)) = d(\Psi(v),\Psi(w))\le d(\Phi(v),\Phi(w))+\bdelta
\end{equation*}
The number of such edges can be bounded 
$$\#\{\text{edges with both endpoints in }\nu\}\le \bdelta(D)|\nu|.$$ 
Using the Cheeger constant (see \eqref{eq: 0-Cheeger})  and minimality of $\beta$ (i.e. $|\beta|\le |\lambda|$) we have
$$|\nu|\le \bdelta(D) |\bfd \nu|\le \bdelta(D) (|\lambda|+|\beta|)\le \bdelta(D) |\lambda|.$$
Therefore,
\begin{equation}
\sum_{e\text{ in }\nu} \ell(\Psi(e)) \le \left(\sum_{e\text{ in }\nu}\ell(\Phi(e))\right)+\bdelta(D)|\nu|\le \left(\sum_{e\text{ in }\nu} \ell(\Phi(e))\right)+\bdelta(D)|\lambda|
\end{equation}

\noindent\textbf{Edges outside $\nu$: } Finally, for the edges whose endpoints are both in $X - H\nu$, we have $\ell(\Phi(e)) = \ell(\Psi(e))$.

\bigskip 

Combining the above we have,
\begin{align*}
    \TD_Y(\Psi)&\le \TD_Y(\Phi)-|\lambda|\cdot \tfrac 1\kappa N+|\beta|\cdot \tfrac 1\kappa N+|\lambda|\cdot \bdelta(D)\\
    &\le \TD_Y(\Phi) - \tfrac 1\kappa N +|\lambda|\cdot \bdelta(D)
.\end{align*}
Since $\lambda$ is in the ball $B^*_R(x)$ we have $|\lambda|\le \bdelta(D,R)$. By choosing $N\ge \bdelta(D,R)$ we have 
$$\TD_Y(\Psi)<\TD_Y(\Phi)$$
contradicting its minimality.
\end{proof}

\subsection*{Summary of \Cref{sec: locally minimizing}}

\begin{definition}
    The \emph{$R$-locally minimal locus} $\overline\calF_R$ is the induced subgraph on the set of all $R$-locally minimizing connectors in $\overline\calF$.
\end{definition}

Combining \Cref{ball subtracks are simple below injrad,stack locus is big,lem: sufficiently congruent are minimal} we get

\begin{proposition}\label{complement of the minimizing locus}
    Let $R,D\ge 0$. If $\injrad(\overline Y)\ge \bdelta(D,R)$, and $\Phi$  minimizes $\TD_Y(\phi)$, then for $N\ge \bdelta(D,R)$ the congruence locus $\overline\calF_{R,N}$ is contained in the minimizing locus $\overline\calF_{R}$ and 
    \begin{equation}\label{eq: complement of the minimizing locus}|\overline\calF\setminus \overline\calF_R|\le \bdelta(D,R)\cdot \vol(\overline Y).
    \end{equation}
\end{proposition}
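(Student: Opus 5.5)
The plan is to combine the three preceding lemmas directly, fixing the parameters in the right order. First fix $D,R\ge 0$. Then choose $N=N(D,R)$ large enough that the hypothesis $N\ge \bdelta(D,R)$ of \Cref{lem: sufficiently congruent are minimal} holds. Finally require $\injrad(\overline Y)$ to be at least the maximum of the injectivity-radius thresholds appearing in \Cref{ball subtracks are simple below injrad}, \Cref{stack locus is big} and \Cref{lem: sufficiently congruent are minimal}. Each of these thresholds depends only on $D,R$, and possibly on $N$; since $N$ is itself a function of $D,R$, the combined threshold is again of the form $\bdelta(D,R)$.

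\textbf{Containment.} Let $\bar\bfc$ be a connector of $\overline\calF_{R,N}$. By \Cref{ball subtracks are simple below injrad}, the ball $\calB^*_R(\bar\bfc)$ is simple, so $N$-congruence makes sense for it. By definition of the congruence locus this ball is regular and $N$-congruent. Since $\Phi$ minimizes $\TD_Y$, \Cref{lem: sufficiently congruent are minimal} applies and shows that $\bar\bfc$ is $R$-locally minimizing. Hence the vertex set of $\overline\calF_{R,N}$ is contained in that of $\overline\calF_R$. Both are induced subgraphs of $\overline\calF$, so $\overline\calF_{R,N}\subseteq\overline\calF_R$.

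\textbf{Size bound.} Since $\overline\calF_{R,N}\subseteq \overline\calF_R$, we have $\overline\calF\setminus\overline\calF_R\subseteq\overline\calF\setminus\overline\calF_{R,N}$. \Cref{stack locus is big} then gives
\[
|\overline\calF\setminus\overline\calF_R|\le \bdelta(D,R)\cdot N\cdot\vol(\overline Y).
\]
With the fixed choice $N=\bdelta(D,R)$ the product $\bdelta(D,R)\cdot N$ is again of the form $\bdelta(D,R)$, which is \eqref{eq: complement of the minimizing locus}.

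The only point needing care is the bookkeeping of constants. The bound must use one specific $N$, the minimal admissible one, rather than an arbitrary $N\ge\bdelta(D,R)$; otherwise the right-hand side would grow with $N$. The containment statement, by contrast, holds for every admissible $N$. The injectivity-radius hypothesis must be strong enough for all three lemmas at once, including the bound on the support of $\nu$ used inside the proof of \Cref{lem: sufficiently congruent are minimal}. Since that support is bounded in terms of $D$ and $R$ alone, this causes no circularity.
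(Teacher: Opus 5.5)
Your proposal is correct and is the same argument as the paper's. The paper proves this proposition simply by combining \Cref{ball subtracks are simple below injrad,stack locus is big,lem: sufficiently congruent are minimal}. The containment comes from \Cref{lem: sufficiently congruent are minimal}, and the bound comes from \Cref{stack locus is big} with $N$ fixed to a value $\bdelta(D,R)$. Your explicit bookkeeping is a correct reading of the paper's constants: the injectivity-radius thresholds do not depend on $N$, and the bound holds only for that fixed $N$.
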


Recall that our goal is to show that $|\overline\calF|\le \bdelta(D)\vol(\overline Y)$.
Under the assumptions of the previous proposition we see that it suffices to establish a bound of the form $|\calF_R|\le \bdelta(D,R)\vol(\overline Y)$ for some $R$.






\section{Tautness} 
\label{sec: tautness}
A 1-cochain is ``taut'' if it is contained in (a uniformly bounded neighborhood of) the convex hull of its coboundary. 
The goal of this section is to prove that if $\lambda$ is a local minimizer then it is taut.
This will be done in three steps: first showing that a taut primitive exists, then showing that any minimizer is taut and finally showing that local tautness implies global tautness.

\subsection{Convex hulls}

We begin by gathering some basic facts and definitions regarding convex sets and nearest point projections.

\begin{definition}
    For $A\subseteq X$, the \emph{convex hull} $\conv(A)$ of $A$ is the union of all geodesics between points in $A$. 
    We denote by $\conv_R(A) = \calN_R(\conv(A))$ the $R$-neighborhood of the convex hull of $A$.
    A set $C\subseteq X$ is \emph{$\kappa$-quasiconvex} if $\conv(C)\subseteq \calN_\kappa(C)$.

    For $C\subseteq X$ and $x\in X$, a point $x'\in C$ is a nearest point to $x$ if $d(x,x') = d(x,C)$.
    We denote by $\pi_C:X\to C$ any function that maps $x\in X$ to a nearest point in $C$ and refer to $\pi_C$ as a nearest point projection to $C$.

    For $x,y,z\in X$, let $\gen{x,y}_z = \tfrac 12(d(x,z)+d(y,z) - d(x,y))$ denote the \emph{Gromov product of $x,y$ with respect to $z$}.
\end{definition}

\begin{lemma}\label{facts about qc sets} $ $
\begin{enumerate}[label = (\arabic*)]
    \item If $C$ is $\kappa$-quasiconvex then $\calN_R(C)$ is $\bdelta(\kappa)$-quasiconvex for all $R$ \cite[Proposition 10.1.2]{coornaertdelzantpap}.
    \item \label{fact: convexity of convex hulls} For all $A\subseteq Y$ the set $\conv_R(A)$ is $\bdelta$-quasiconvex for all $R\ge 0$ \cite[Proposition 10.1.3]{coornaertdelzantpap}.
\item If $x',x''$ are nearest points to $x$ in a $\kappa$-quasiconvex subset $C$, then $d(x',x'')\le \bdelta(\kappa)$ \cite[Proposition 10.2.1]{coornaertdelzantpap}.
\item 
If $C$ is a finite $\kappa$-quasiconvex subset of $X$, then for every $x_n\in X$ such that $x_n\to \xi\in \partial X$ the set of limits points of the projections $\{\pi_C(x_n)\}$ is non-empty and has diameter at most $\bdelta(\kappa)$. 
We extend $\pi_C:\partial X\to C$ by setting $\pi_C(\xi)$ to be any limit point of $\{\pi_C(x_n)\}$ for $x_n\to \xi$.
\item \label{nearest point quadrilateral} Let $C$ be a finite $\kappa$-quasiconvex subset of $X$, then for all $x,y\in X\cup \partial X$ if $\gen{x,y}_{\pi_C(x)}\ge \bdelta(\kappa)$ then $d(\pi_C(x),\pi_C(y))\le \bdelta(\kappa)$. \hfill \qedsymbol
\end{enumerate}
\end{lemma}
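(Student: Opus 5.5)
Items (1)--(3) of \Cref{facts about qc sets} are standard and are cited from \cite{coornaertdelzantpap}, so the plan is to recall their mechanism briefly and prove items (4) and (5) from them. The main tool is the standard projection estimate for a $\kappa$-quasiconvex $C$. For $x\in X$ and $c\in C$, if $x'=\pi_C(x)$, then
\[\gen{x,c}_{x'}\le \bdelta(\kappa).\]
To see this, consider a geodesic triangle on $x,x',c$. The geodesic $[x',c]$ lies in $\calN_\kappa(C)$. The internal point of $[x',c]$ at distance $\gen{x,c}_{x'}$ from $x'$ is $\bdelta$-close to the corresponding point of $[x',x]$. If that distance were large, this would give a point of $C$ closer to $x$ than $x'$, contradicting the choice of $x'$.

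Item (3) follows directly from this estimate. Apply it with $c=x''$, and symmetrically with the roles of $x'$ and $x''$ swapped. Since $d(x,x')=d(x,x'')$, the resulting tree approximation gives $d(x',x'')\le\bdelta(\kappa)$.

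Item (5) is the key step, and I would prove it first for $x,y\in X$. Set $x'=\pi_C(x)$ and $y'=\pi_C(y)$. The projection estimate gives $\gen{x,y'}_{x'}\le\bdelta(\kappa)$ and $\gen{y,x'}_{y'}\le\bdelta(\kappa)$. I would then take the geodesic quadrilateral on $x,x',y',y$. Thin-quadrilateral (tree) approximation shows that if $d(x',y')$ exceeds a suitable $\bdelta(\kappa)$, the geodesic $[x,y]$ passes within $\bdelta(\kappa)$ of both $x'$ and $y'$. In that case $\gen{x,y}_{x'}\le\bdelta(\kappa)$. Taking the contrapositive, if $\gen{x,y}_{x'}$ exceeds a larger $\bdelta(\kappa)$, then $d(x',y')\le\bdelta(\kappa)$.

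For item (4), let $C$ be finite, so the projections $\pi_C(x_n)$ lie in a finite set and limit points exist. Now suppose $x_n,y_m\to\xi$. Then $\gen{x_n,y_m}_{c}\to\infty$ for any fixed basepoint $c$, and $C$ is finite, so the Gromov products are uniformly large over the basepoints $\pi_C(x_n)\in C$. Applying (5) in $X$ then gives $d(\pi_C(x_n),\pi_C(y_m))\le\bdelta(\kappa)$ for $n,m$ large, which bounds the diameter of the limit set by $\bdelta(\kappa)$. The extension of (5) to $\partial X$ then follows by approximating $\xi$ with sequences: the Gromov product at boundary points is defined via $\liminf$ up to an error of $\bdelta$, which is absorbed into the constants.

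The main obstacle is the bookkeeping in the quadrilateral argument for (5), and choosing the threshold constant in (5) larger than the constant in the conclusion. I would do this by comparing with a tripod/tree approximation of the five points $x,y,x',y'$ and a point of $C$, using that $\bdelta$ absorbs the $O(\delta)$ errors from approximating finitely many points by a tree.

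Item (1) follows from the fact that a geodesic between points of $\calN_R(C)$ fellow-travels, up to $\bdelta$ error, the concatenation of short segments to $C$ and a geodesic between the resulting points of $C$. Item (2) follows by noting that $\conv(A)$ is $\bdelta$-quasiconvex, since a geodesic between points on geodesics $[a_1,a_2]$ and $[a_3,a_4]$ with $a_i\in A$ lies in the $2\bdelta$-neighborhood of $[a_1,a_3]\cup[a_2,a_4]\cup\dots$ by thin quadrilaterals. Combining this with (1) gives (2).
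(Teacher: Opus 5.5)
Your proposal is correct, and for (4)--(5) it has the same architecture as the paper. You establish (5) for points of $X$, deduce (4) because Gromov products based at points of the finite set $C$ blow up along sequences converging to $\xi$, and then pass to boundary points by approximation.

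The real difference is the engine for (5). The paper follows Mitra: it projects $y$ to a point $z$ of $[x,\pi_C(x)]$ and uses that $[y,z]\cup[z,\pi_C(x)]$ is a $\bdelta$-quasigeodesic, and it cites Coornaert--Delzant--Papadopoulos for (1)--(3). You instead isolate the single estimate $\gen{x,c}_{\pi_C(x)}\le\bdelta(\kappa)$ for all $c\in C$, and run a four-point argument on $x,\pi_C(x),\pi_C(y),y$. The constant bookkeeping you flag is painless. The identity $\gen{y,\pi_C(y)}_{\pi_C(x)}=d(\pi_C(x),\pi_C(y))-\gen{y,\pi_C(x)}_{\pi_C(y)}$, combined with the four-point inequality at $\pi_C(x)$, shows that once $\gen{x,y}_{\pi_C(x)}$ exceeds a suitable $\bdelta(\kappa)$ we get $d(\pi_C(x),\pi_C(y))\le\bdelta(\kappa)$.

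What your route buys:
\begin{itemize}
\item The lemma becomes self-contained.
\item (3) drops out of a single application of the estimate with $c=x''$; no symmetrization is needed.
\item Your two-sequence version of (4) is exactly what makes the boundary extension of $\pi_C$ independent of the approximating sequence up to $\bdelta(\kappa)$. The paper only gestures at this.
\end{itemize}
The paper's route is shorter because it outsources these steps to the literature.

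Two wording fixes are needed.
\begin{itemize}
\item In (1), the segments $[a,a']$ need not be short, and $[a,b]$ need not fellow-travel the concatenation. You only need the one-sided inclusion $[a,b]\subseteq\calN_{\bdelta}([a,a']\cup[a',b']\cup[b',b])$. Its constant is independent of $R$ because $[a,a']\subseteq\calN_R(C)$.
\item In (2), the relevant thin quadrilateral is $p,a_1,a_3,q$, built from the subsegments of $[a_1,a_2]$ and $[a_3,a_4]$ through $p$ and $q$ together with $[a_1,a_3]\subseteq\conv(A)$.
\end{itemize}
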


Items (4) and (5) of  Lemma~\ref{facts about qc sets} follow from the fact that nearest point projection  $\pi_C: X \to C$ is coarsely well-defined \cite[Lemmas 3.1, 3.2]{mitra-trees}.
We first provide  a quick sketch for (5) assuming that $y \in X$ and refer the reader to the argument in \cite[Lemmas 3.1, 3.2]{mitra-trees} for further details.
Suppose
$\gen{x,y}_{\pi_C(x)}$ is large as in the hypothesis. Let $z$ be the nearest point projection of  $y$ onto $[x, \pi_C(x)]$. 
Then $[y,z]\cup [z, \pi_C(x)]$ is a $\bdelta-$quasigeodesic. 
Hence $d(\pi_C(x),\pi_C(y))\le \bdelta(\kappa)$. 

To extend to $\partial X$ and prove (4), let $x_n \to \xi$ be any sequence of points in $X$. Then $\gen{x_n,x_m}_{\pi_C(x_n)}\to \infty$, and the proof of (5) in the paragraph above establishes that $\pi_C(\xi)$
is coarsely well-defined.

\begin{lemma}\label{projection of connected is quasi connected}
    Let $C$ be a finite $\kappa$-quasiconvex subset of $X$. Let $\pi_C:\partial X\to C$ be a nearest point projection. If $\Gamma\subseteq \partial X$ is path connected, then $\calN_{\bdelta(\kappa)}(\pi_C(\Gamma))$ is connected.
\end{lemma}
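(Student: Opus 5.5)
We argue via coarse continuity of $\pi_C$ on $\partial X$ plus compactness of path connected sets. Set $K = \bdelta(\kappa)$, to be enlarged finitely many times. We will show that for any two points $\xi,\eta\in\Gamma$ there is a chain $\pi_C(\xi)=p_0,p_1,\dots,p_k=\pi_C(\eta)$ of points of $\pi_C(\Gamma)$ with $d(p_{i-1},p_i)\le K$. The union of geodesics $[p_{i-1},p_i]$ is then a path inside $\calN_{K}(\pi_C(\Gamma))$. Moreover, every point of $\calN_K(\pi_C(\Gamma))$ is joined by a geodesic of length $\le K$, lying in that neighborhood, to a point of $\pi_C(\Gamma)$. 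Together these give connectedness.

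\textbf{Step 1 (coarse continuity).} Fix a basepoint $o\in C$. Since $C$ is finite, $d(o,c)\le \diam C<\infty$ for all $c\in C$. For $\xi\in\partial X$, choose $x_n\to\xi$ with $\pi_C(x_n)\to\pi_C(\xi)$, using Lemma~\ref{facts about qc sets}(4). Take $\zeta\in\partial X$ with $\gen{\xi,\zeta}_o$ large. For $y_m\to\zeta$ and $n,m$ large, $\gen{x_n,y_m}_o$ is also large. Since $d(o,\pi_C(x_n))\le\diam C$, the Gromov product changes by at most $\diam C$ when we change basepoint, so $\gen{x_n,y_m}_{\pi_C(x_n)}\ge \gen{x_n,y_m}_o-\diam C$.

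Here lies the subtlety: $\diam C$ is not controlled by $\kappa$. That does no harm, because we only need the Gromov product to exceed the threshold in Lemma~\ref{facts about qc sets}\ref{nearest point quadrilateral}, and we may choose the neighborhood of $\xi$ depending on $C$. So there is an open neighborhood $U_\xi$ of $\xi$ (namely $\{\zeta:\gen{\xi,\zeta}_o> M_C\}$) such that $d(\pi_C(x_n),\pi_C(y_m))\le \bdelta(\kappa)$ for large $n,m$. Passing to limits, and using that any limit point of the projections is within $\bdelta(\kappa)$ of any other by (4), we get $d(\pi_C(\xi),\pi_C(\zeta))\le K$ for all $\zeta\in U_\xi$. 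The constant $K$ depends only on $\kappa$; only the size of $U_\xi$ depends on $C$.

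\textbf{Step 2 (chaining).} Let $\gamma:[0,1]\to\Gamma$ be a path from $\xi$ to $\eta$. The sets $\gamma^{-1}(U_{\gamma(t)})$ form an open cover of $[0,1]$. Take a Lebesgue number $\epsilon$ and a partition $0=t_0<\dots<t_k=1$ of mesh less than $\epsilon$. Each $[t_{i-1},t_i]$ lies in some $\gamma^{-1}(U_{\gamma(s_i)})$, so $d(\pi_C(\gamma(t_{i-1})),\pi_C(\gamma(t_i)))\le 2K$, going through $\pi_C(\gamma(s_i))$. Setting $p_i=\pi_C(\gamma(t_i))$ and replacing $K$ by $2K$ gives the required chain.

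The main obstacle is Step 1. There we must make sure the extension of $\pi_C$ to $\partial X$, which is only defined up to a choice of limit point, still has coarse continuity with a constant independent of $C$. We also need to handle the basepoint shift correctly, absorbing the $C$-dependence into the neighborhood $U_\xi$ rather than into $K$. If the direct limiting argument is awkward, we would instead apply Lemma~\ref{facts about qc sets}\ref{nearest point quadrilateral} directly to boundary points, which the lemma explicitly allows ($x,y\in X\cup\partial X$), with $x=\xi$ and $y=\zeta$. This requires checking that $\gen{\xi,\zeta}_{\pi_C(\xi)}\ge\gen{\xi,\zeta}_o-\diam C-\bdelta$.
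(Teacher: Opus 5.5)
Your proof is correct and follows the paper's route. The paper likewise uses Lemma~\ref{facts about qc sets}(5) to get, for each $\xi$, a neighborhood $U$ on which $\pi_C$ moves by at most $\bdelta(\kappa)$, and then says the lemma ``easily follows''. Your Lebesgue-number chaining along a path in $\Gamma$ supplies exactly that omitted step, and you are right that the dependence on $C$ (through $\diam C$) only affects the size of $U_\xi$, not the constant.
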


\begin{proof}
    By \Cref{facts about qc sets}\ref{nearest point quadrilateral} it follows that for every $\xi\in X\cup \partial X$ there exists a neighborhood $U$ such that $d(\pi_C(\xi),\pi_C(\zeta))\le \bdelta(\kappa)$ for all $\zeta\in U$. 
    The lemma easily follows.
\end{proof}

\begin{lemma}\label{lem: pushing away curves}
    Let $G$ be a one-ended hyperbolic group. 
    For every $D\ge \bdelta$, $\kappa\ge 0$ and $R \ge \bdelta(D,\kappa)$ if $C\subseteq Y$ is a finite $\kappa$-quasiconvex subset, then $C'=\calN_R(C)$ has the \emph{pushing away curves property}:
    \begin{enumerate}[label = (PAC)]
        \item \label{pushing away property} For every closed loop $\gamma$ in $Y-C'$ and every $r\ge 0$, the curve $\gamma$ can be freely homotoped in $Y-C$ into $Y-\calN_{R+r}(C)$ (See \Cref{fig:pac}).
    \end{enumerate}
\end{lemma}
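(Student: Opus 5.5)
The approach is to push the loop radially outward, away from $C$, using the nearest point projection $\pi_C$ and the hyperbolic geometry of $Y$. One-endedness of $G$ is used to ensure that the relevant region outside $C$ is connected at large scale.

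First reduce to short loops. Since $Y$ is the 2-skeleton of a Rips complex with $D\ge\bdelta$, it is simply connected and any loop in $Y$ is homotopic in $Y$ to an edge loop. Closed edge loops of length at most $3$ bound 2-simplices, so the homotopy is built one triangle at a time. Each triangle has diameter at most $D$, so if $\gamma$ lies outside $C'=\calN_R(C)$, the homotopy stays outside $\calN_{R-D}(C)\supseteq C$ once $R\ge D$. Thus we may take $\gamma$ to be an edge loop $x_0,x_1,\dots,x_n=x_0$ with $d(x_i,C)>R$.

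\textbf{Step 1 (radial extension).} For each vertex $x_i$, choose a geodesic ray $\rho_i$ starting at $x_i$ that moves away from $C$. Concretely, pick $\xi_i\in\partial X$ such that $\gen{\xi_i,\pi_C(x_i)}_{x_i}\le\bdelta$; such a point exists because $G$ is non-elementary, so boundary points exist in coarsely every direction at $x_i$. Along such a ray, $d(\cdot,C)$ grows linearly by the thin-triangles argument. Let $x_i'$ be the point of $\rho_i$ at distance $r+\bdelta$. Then $x_i'\notin\calN_{R+r}(C)$, and the segment $[x_i,x_i']$ avoids $\calN_{R-\bdelta}(C)$.

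\textbf{Step 2 (ladder).} The key step is to choose the $\xi_i$ coherently. For adjacent vertices $x_i,x_{i+1}$, the ends $x_i',x_{i+1}'$ should be joinable by a path outside $\calN_{R+r}(C)$ such that the resulting quadrilateral $x_i,x_{i+1},x_{i+1}',x_i'$ is null-homotopic in $Y-C$. The plan is to choose $\xi_{i+1}$ close to $\xi_i$ in $\partial X$, so that the rays fellow travel, and to join them at height $r$ by a path of bounded length. This makes the quadrilateral a bounded-diameter loop lying outside $\calN_{R-\bdelta(D)}(C)$. Such a loop is contractible within its $\bdelta(D)$-neighbourhood, because $Y$ has a linear isoperimetric filling with bounded-diameter filling discs; that neighbourhood misses $C$ when $R\ge\bdelta(D,\kappa)$.

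Making this choice consistently around the loop is where one-endedness and connectivity of $\partial X$ enter. The set $\Gamma$ of boundary points whose projection to $C$ lies far from $\pi_C(x_i)$, or more precisely the shadow of the complement of $C$, must be path connected. \Cref{projection of connected is quasi connected} then lets us move the choice of $\xi$ continuously as $x_i$ moves along $\gamma$. Once we return to $x_0$, the final ray may differ from the initial one, but both lie in the connected shadow. Joining them by a path in the boundary produces a sequence of rays whose consecutive members fellow travel, which closes up the ladder.

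\textbf{Main obstacle.} The hardest part is this monodromy, or consistency, issue: showing that the shadow $\{\xi: \pi_C(\xi)\text{ near }\pi_C(x)\}$, or a suitable enlargement of it, is path connected uniformly in $C$. For this I would first use that $\partial X$ is connected and locally path connected, which holds by Bestvina--Mess/Swarup for one-ended hyperbolic groups. That gives uniform path-connectivity of boundary shadows of large balls, from which the uniform constant $\bdelta(D,\kappa)$ follows.
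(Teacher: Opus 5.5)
Your outline has the same skeleton as the paper's proof. You take radial rays from $\pi_C$ through each vertex (Bestvina--Mess), let one-endedness enter through paths in $\partial X$ joining the endpoints of rays at consecutive vertices, and fill away from $C$. However, the two steps that carry the content are not correct as you state them.

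\textbf{The connectivity input is misidentified.} The set $\{\xi:\pi_C(\xi)\text{ near }\pi_C(x)\}$ need not be path connected, and enlarging the tolerance does not help. For a closed surface group, take $C$ a long geodesic segment with midpoint $p$: the boundary points projecting near $p$ form two disjoint arcs of $\partial X\cong S^1$, one on each side of $C$. What you need is a restriction to the shadow of the vertex. With $o=\pi_C(x_i)$, the endpoints $\xi_i,\xi_{i+1}$ of the radial rays through $x_i,x_{i+1}$ satisfy $\gen{\xi_i,\xi_{i+1}}_o\ge R-\bdelta(D)$. The required statement is then: two boundary points with Gromov product at least $R'$ at $o$ are joined by a path $\Gamma_i\subseteq\partial X$ with $\gen{\zeta,\zeta'}_o\ge R'-\bdelta$ for all $\zeta,\zeta'\in\Gamma_i$, uniformly in $o$ and $R'$. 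This is exactly what the paper imports from \cite[Lemma 6.1]{lazarovich2023commensurated}. The uniformity does not follow from local path-connectedness alone. After rescaling by the cocompact action, it says that points outside a ball about $\omega\in\partial X$ can be joined outside a smaller concentric ball, so it rests on $\partial X$ having no cut points. Once every edge of $\gamma$, including the closing one, gets its own $\Gamma_i$, there is no monodromy to handle.

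\textbf{The ladder fails with one ray per vertex.} Making rays at consecutive vertices fellow travel to height $r$ forces $\gen{\xi_i,\xi_{i+1}}\gtrsim r$. For $r$ much larger than the length of $\gamma$, this puts all $\xi_i$ in one tiny visual ball. Yet the admissible directions at far-apart vertices can be disjoint, e.g.\ for a loop encircling $C$ when $G$ is a closed hyperbolic $3$-manifold group. Also, the quadrilateral $x_i,x_{i+1},x'_{i+1},x'_i$ has diameter about $r$, so it is not a bounded-diameter loop.

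The repair is to do for every edge what you reserve for closing up:
\begin{itemize}
\item Discretize $\Gamma_i$ into $\zeta_0=\xi_i,\dots,\zeta_k=\xi_{i+1}$ with consecutive Gromov products at $o$ at least $R+r+\bdelta(D,\kappa)$.
\item Radial rays from $o$ to consecutive $\zeta_j$ then fellow travel up to distance $R+r+\bdelta(D,\kappa)$ from $o$. There they are already outside $\calN_{R+r}(C)$.
\item Beyond distance $R-\bdelta(D)$ from $o$ these rays stay outside $\calN_{R-\bdelta(D,\kappa)}(C)$. So the thin strips between them are filled by rungs of length $\bdelta(D)$, away from $C$.
\end{itemize}
This is a legitimate alternative to the paper's last step. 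The paper instead gets the top path $\gamma'_i$ in one stroke by projecting $\Gamma_i$ to $\calN_{R+\rho}(C)$ via \Cref{projection of connected is quasi connected}. It then fills $\omega_i$ inside the quasiconvex set $O_i$, which misses $C$, using contractibility of $\Rips_D(O_i)$. In your version that projection lemma plays no role, since compactness of $\Gamma_i$ produces the chain. It is not what lets you ``move $\xi$ continuously.''
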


\begin{figure}
    \centering
    \includegraphics[]{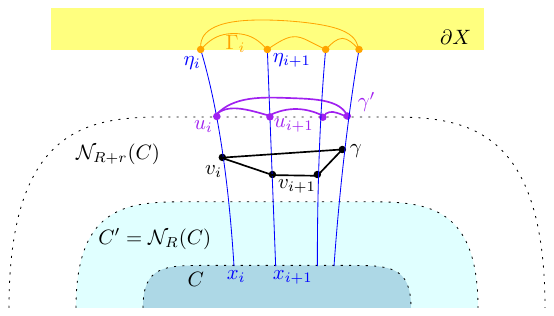}
    \caption{An illustration of the pushing away curves property \ref{pushing away property}, the outline of the free homotopy is shown in orange.}
    \label{fig:pac}
\end{figure}


    \begin{proof}
        Let $C' =\calN_R(C)$, and let $\gamma$ be a closed loop in $Y-C'$. 
        Without loss of generality we may assume that $\gamma$ is the combinatorial path $v_0,v_1,\dots,v_{m},v_0$. Since $v_i,v_{i+1}$ are adjacent in $Y$, we have $d(v_i,v_{i+1})\le D$.

        Let $x_i$ be a nearest point to $v_i$ in $C$.
        By \cite[Lemma 3.1]{bestvina1991boundary} there exists a geodesic ray $\eta_i$ starting at $x_i$, and a point $u_i\in \eta_i$ such that $d(v_i,u_i)\le \bdelta$.
        It follows that $d(u_i,u_{i+1})\le \bdelta(D)$ and $\gen{\eta_i,\eta_{i+1}}_{x_i}\ge R-\bdelta(D)$.

        It follows from \cite[Lemma 6.1 (1)]{lazarovich2023commensurated} that if $R\ge \bdelta(D)$ then there exists a path $\Gamma_i\subseteq\partial X$ between $\eta_i,\eta_{i+1}$ such that
        $(\xi,\xi')_{x_i}\ge R-\bdelta(D)$ for all $\xi,\xi'\in \Gamma_i$.

        Let $\rho\ge 0$, and consider $C'' = \calN_{R+\rho}(C)$. 
        By \Cref{facts about qc sets}\ref{fact: convexity of convex hulls}, $C''$ is $\bdelta(\kappa)$-quasiconvex.
        For $0\le i\le m$ let $u'_i$ be the point on $\eta_i$ such that $d(u'_i,x_i) = R+\rho$, in other words, $u'_i$ is a nearest point projection of $\eta_i$ to $C''$.
        By \Cref{projection of connected is quasi connected} the projection $\pi_{C''}(\Gamma_i)$ gives rise to a $\bdelta(\kappa)$-connected subset.
        In particular, we can find a path $\gamma'_i$ in $Y\ssm \calN_{R+\rho-\bdelta(\kappa)}(C)$  connecting $u'_i,u'_{i+1}.$
        Let $\gamma' = \gamma'_0 \gamma'_1\dots \gamma'_m$ be their concatenation.

        We claim that for $D\ge \bdelta$ and $R\ge \bdelta(D,\kappa)$ the loops $\gamma$ and $\gamma'$ are freely homotopic in $Y\ssm C$:

        For $0\le i\le m$ the closed loop $\omega_i = [v_i,u'_i]\;\gamma'_i\;[u'_{i+1},v_{i+1}]\;[v_{i+1},v_i]$
        is contained in the $\bdelta$-quasiconvex set $O_i = \{x\;|\;\gen{x,\eta_i}_{x_i}\ge R-\bdelta(D)\}.$ 
        For $D\ge \bdelta$ the Rips complex $\Rips_D(O_i)$ is contractible \cite[Ch.4]{GhH},
        and so there is a disk filling $\omega_i$ in $O_i$.

        Thus, $\gamma$ in $X\ssm \calN_R(C)$ can be freely homotoped in $X\ssm C$ to a loop $\gamma'$ in $X\ssm \calN_{R+\rho - \bdelta(\kappa)}(C)$ for all $\rho$. 
    \end{proof}

\subsection{Taut primitives}
As a first step, we show that every coboundary has a primitive supported in its convex hull.

\begin{definition}
    $\beta\in C^1_c(Y)$ is \emph{$T$-taut} if $\beta$ is supported in $\conv_T(\bfd \beta)$.
\end{definition}

\begin{lemma}\label{lem: primitive in convex hull}
    Let $\alpha\in B^2_c(Y)$. Then $\alpha$ has a $\bdelta$-taut primitive. That is, there exists $\beta\in C^1_c(Y)$ supported in $\conv_\bdelta(\alpha)$ such that $\alpha = \bfd \beta$. 
\end{lemma}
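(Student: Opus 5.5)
Let $A=\conv_{T}(\alpha)$ for a suitable $T=\bdelta$, viewed as the full subcomplex of $Y$ spanned by the vertices within distance $T$ of the convex hull of the vertices of the simplices in $\alpha$. The plan is to show directly that $\alpha$ is a coboundary of a cochain supported in $A$. Equivalently, by duality, I must show that $\alpha$ pairs trivially with every 2-cycle relative to the complement. The cleanest route is via a retraction.

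\textbf{Step 1: a coarse retraction.} By \Cref{facts about qc sets}\ref{fact: convexity of convex hulls}, $A$ is $\bdelta$-quasiconvex. Let $\pi=\pi_A:Y^0\to A^0$ be a nearest point projection. By \Cref{facts about qc sets}(3) and the standard fact that nearest point projections to quasiconvex sets are coarsely Lipschitz, adjacent vertices of $Y$ (distance $\le D$) are sent to vertices at distance $\le D+\bdelta$. So for $D\ge\bdelta$ (which we are free to assume, the statement being in $Y=\Rips_D(X)^{(2)}$), every simplex of $Y$ is mapped to a set of diameter $\le \bdelta(D)$. To land in a simplex I would either enlarge $T$ or replace $\pi$ by projection to $A$ composed with a contraction. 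The simpler route is as follows.

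\textbf{Step 2: build the primitive.} Since $Y$ is simply connected and one-ended, $H^1_c(Y)=0$ is not needed. Instead, take any primitive $\beta_0$ with $\bfd\beta_0=\alpha$. Outside $A$ the cochain $\beta_0$ is a cocycle on $Y\ssm\supp\alpha$. The goal is to subtract $\bfd\nu$ for some $\nu\in C^0_c(Y)$ so as to kill $\beta_0$ outside $\conv_\bdelta(\alpha)$.

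Consider the complement $U=Y\ssm \calN_T(\conv(\alpha))$. The restriction $\beta_0|_U$ is a finitely supported 1-cocycle on $U$. If every component of $U$ is simply connected at the level needed, namely $H^1(U;\bbZ/2)$ restricted to finitely supported classes vanishes, then $\beta_0|_U=\bfd\nu$ on $U$. Here $\nu$ is locally constant far away, and since $\beta_0$ has compact support, $\nu$ is constant on each unbounded component.

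Because $G$ is one-ended and $\partial X$ is connected, I expect $U$ to have a single unbounded component (up to bounded pieces, which can be absorbed by enlarging $T$). Normalizing $\nu=0$ there makes $\nu$ finitely supported. Extending $\nu$ by $0$ and setting $\beta=\beta_0+\bfd\nu$ gives $\bfd\beta=\alpha$ with $\beta$ vanishing on $U$, so $\beta$ is $\bdelta$-taut.

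\textbf{Main obstacle: showing $\beta_0|_U$ is a coboundary on $U$.} This requires that every closed loop $\gamma$ in $U$ pairs trivially with $\beta_0$. The tool I would use is \Cref{lem: pushing away curves}. Take $C=\conv(\alpha)$ (finite and $\bdelta$-quasiconvex) and $C'=\calN_T(C)$ with $T\ge\bdelta(D)$.

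By (PAC), $\gamma$ is freely homotopic in $Y\ssm C$ to a loop $\gamma'$ outside $\calN_{T+r}(C)$, with $r$ so large that $\gamma'$ misses $\supp\beta_0$. Then $\beta_0(\gamma')=0$.

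The homotopy is a map of a cylinder into $Y\ssm C$, hence it avoids every 2-simplex of $\alpha$; those simplices have all their vertices in $C$, and the homotopy can be taken simplicial. Since $\bfd\beta_0=\alpha$ vanishes on the simplices it sweeps, $\beta_0(\gamma)=\beta_0(\gamma')=0$.

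The delicate point is making the homotopy avoid the simplices of $\alpha$, not merely their vertices. I would handle this by taking $C=\calN_{D}(\conv(\alpha))$, so that any simplex meeting $\supp\alpha$ lies inside $C$. The remaining bookkeeping is for bounded components of $U$: there $\nu$ is determined up to a constant, and setting it to $0$ keeps the support finite.
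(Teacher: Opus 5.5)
Your argument is correct and is essentially the paper's proof written at the cochain level. The paper likewise treats a primitive as a compactly supported cocycle on $Y-C$, uses (PAC) and the universal coefficient theorem to show its restriction vanishes in $H^1(Y-C')$, and uses one-endedness (injectivity of $H^1_c\to H^1$ for $Y-C'$) to make the resulting $0$-cochain compactly supported. One point to state correctly: $\nu$ is constant on components of $U$ with the finitely many edges of $\supp\beta_0$ deleted, not on components of $U$, so what you need is that $U$ has one end. That follows immediately from $Y$ being one-ended and $C'$ being finite, with no appeal to $\partial X$ and no enlarging of $T$; your Step 1 can simply be dropped.
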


For $C\subseteq Y$, recall that the relative $i$-cochain group $C^i_c(Y,Y -C)$ is the set of all compactly supported $i$-cochains that are supported on $C$.

\begin{proof}
Let $C = \conv(\alpha)$. By \Cref{facts about qc sets}\ref{fact: convexity of convex hulls}, $C$ is $\bdelta$-quasiconvex. Now, by \Cref{lem: pushing away curves}, some neighborhood $C' = \conv_\bdelta(\alpha)$ satisfies \ref{pushing away property}. 

We know that $\alpha\in C^2_c(Y,Y-C)$, and $\alpha =0 \in H^2_c(Y)$ and we want to show that $\alpha = 0\in H^2_c(Y,Y-C')$. 
Since $C,C'$ are compact there exist long exact sequences in compactly supported cohomology for the pairs $(Y,Y-C)$ and $(Y,Y-C')$. By naturality we have the following commuting diagram:
\begin{equation}
    \begin{tikzcd}
   \dots  \arrow[r] & H^1_c(Y-C) \arrow[r,"\bfd"]\arrow[d,"r"] &H^2_c(Y,Y-C) \arrow[r,"\iota"]\arrow[d,"r"] &H^2_c(Y) \arrow[r]\arrow[d,"r"] &\dots\\
   \dots  \arrow[r] & H^1_c(Y-C') \arrow[r,"\bfd"] &H^2_c(Y,Y-C') \arrow[r,"\iota"] &H^2_c(Y) \arrow[r] &\dots
\end{tikzcd}
\end{equation}
Here the vertical arrows are the obvious restriction maps. 
Our goal is to prove that $r(\alpha)=0$.

By assumption, $\iota(\alpha)=0$, and so by exactness there exists $\beta\in H^1_c(Y-C)$ such that $\bfd(\beta)=\alpha$. By commutativity, 
$r(\alpha) = r(\bfd (\beta)) = \bfd (r (\beta)) \in H^2_c(Y,Y-C')$
and so the proof will be complete if we show that $r:H^1_c(Y-C) \to H^1_c(Y-C')$ is $0$.

Since $Y$ is one-ended, so are $Y-C, Y-C'$. Recall that a space has at most one end if and only if the comparison map $H^1_c \to H^1$ is injective. 
Thus the horizontal maps in the following commutative diagram are injective:
\begin{equation}
    \begin{tikzcd}
        H^1_c (Y-C) \arrow[hookrightarrow]{r}{c}\arrow[d,"r"] &H^1(Y-C) \arrow[d,"r"]\\
        H^1_c (Y-C') \arrow[hookrightarrow]{r}{c} &H^1(Y-C') 
    \end{tikzcd}
\end{equation}
Thus, it suffices to show that $r\circ c =0$.

Let $\beta \in C^1_c(Y-C)$. 
Let $\gamma$ be any closed curve in $Y-C'$.
By \ref{pushing away property},  the curve $\gamma$ can be homotoped in $Y-C$ to a curve which is arbitrarily far away from $C'$. In particular, it can be homotoped to a curve $\gamma'$ such that $\gamma'\cap \supp(\beta)=\emptyset$. 
We have $r(c(\beta)) (\gamma) = c(\beta)(\gamma) = c(\beta)(\gamma') = 0$. 
Since this holds for every closed curve $\gamma$ in $Y-C'$, by the Universal Coefficient Theorem, $r(c(\beta)) = 0 \in H^1(Y-C')$. Since this holds for every $\beta$, we get the desired $r\circ c =0$.
\end{proof}

\subsection{Minimizers are taut}

Our next goal is to show that the minimizers are taut.
In the proposition below, $D$ refers to the constant used in building the Rips' complex $Rips_D(Y)$.
\begin{proposition}\label{minimal is taut}
    For $D \ge \bdelta$, if $\lambda\in C^1_c(Y)$ is minimal then $\lambda$ is $\bdelta(D)$-taut.
\end{proposition}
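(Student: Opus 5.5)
Write $\alpha = \bfd\lambda$, let $C = \conv_{\bdelta}(\alpha)$, and let $\beta$ be the $\bdelta$-taut primitive of $\alpha$ supported in $C$ given by \Cref{lem: primitive in convex hull}. The plan is to compare $\lambda$ with $\beta$ and show that the part of $\lambda$ lying far from $C$ can be deleted without changing the coboundary. Since $\lambda$ is minimal, that part must then be empty.

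\textbf{Step 1.} The cochain $\lambda+\beta$ is a compactly supported $1$-cocycle. Since $Y$ is one-ended, $H^1_c(Y)=0$, so there is $\nu\in C^0_c(Y)$ with $\bfd\nu = \lambda+\beta$. Outside $C$ we have $\beta=0$, so there $\lambda = \bfd\nu$. Set $R=\bdelta(D)$ as in \Cref{lem: pushing away curves} and $C'=\calN_R(C)$. The goal is to show $\lambda$ has no edge outside $C'$, i.e.\ $\lambda$ is $\bdelta(D)$-taut.

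\textbf{Step 2.} Suppose $\lambda$ has an edge outside $C'$. Let $W$ be a connected component of the full subcomplex of $Y$ spanned by the vertices outside $C'$. On $W$, $\lambda|_W = \bfd\nu|_W$ is a coboundary, and the aim is to show $\nu$ is constant on $W$. By \ref{pushing away property}, any loop in $Y - C'$ can be freely homotoped in $Y - C$ arbitrarily far away, hence off the compact support of $\nu$. This suggests using $\nu$ to pair with curves. Concretely, take two vertices $u,w\in W$, a path $p$ in $W$ joining them, and the evaluation $\lambda(p) = \nu(u)+\nu(w)$.

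The cleaner route is to argue via one-endedness of $W$'s complement structure. Since $C'$ is quasiconvex and $X$ is one-ended with connected boundary, $W$ is the unique unbounded component. Bounded components lie within $\bdelta$ of $C'$ (quasiconvexity plus \cite[Lemma 3.1]{bestvina1991boundary}: every point lies near a ray leaving $C$), so enlarging $R$ by $\bdelta$ absorbs them. Now $\nu$ is finitely supported and $W$ is connected and unbounded, and $\bfd\nu$ on $W$ only detects $\nu$ changing along edges. This alone does not force $\nu|_W = 0$, so I will instead use minimality. Define $\nu' = \nu\cdot \mathbf 1_{Y^0\setminus W^0}$ and $\lambda' = \lambda + \bfd(\nu\,\mathbf 1_{W^0})$. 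Then $\bfd\lambda'=\alpha$, and $\lambda'$ agrees with $\lambda$ off $W\cup\partial W$, vanishes on edges inside $W$, and on edges crossing $\partial W$ equals $\beta+\bfd\nu'$ there. Since $\beta$ is supported in $C$, these crossing values come from $\nu$ on $C'$-side vertices adjacent to $W$.

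\textbf{Step 3 (main obstacle).} The remaining task is to show $|\lambda'|<|\lambda|$, i.e.\ that deleting $\lambda\cap W$ costs fewer new edges on $\partial W$ than it saves. This fails in general, so the choice of cut must be optimized. I would choose the cut level $r\in[R,2R']$ (cutting along $\calN_r(C)$) to minimize the boundary cost, by a coarea argument. Summing over $r$, the number of edges of $\bfd(\nu\mathbf 1_{\{d(\cdot,C)>r\}})$ across level $r$ is controlled by the edges of $\lambda$ in the annulus, because $\lambda=\bfd\nu$ there. Hence some level has crossing count at most the average of $\lambda$'s annulus edges. Since edges of $\lambda$ beyond that level are strictly removed, minimality gives a contradiction unless $\lambda$ has no edges past that level.

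A subtlety is that cutting at level $r$ replaces $\lambda$ beyond $r$ by $\bfd(\nu\mathbf 1_{>r})$ restricted to level-crossing edges. These are exactly edges of $\lambda$ crossing level $r$, so $|\lambda'|\le|\lambda|$, with strict inequality if $\lambda$ had any edge strictly beyond $r+D$. Thus $\lambda$ is supported in $\calN_{r+D}(C)\subseteq \conv_{\bdelta(D)}(\alpha)$. On reflection this truncation argument may even bypass \ref{pushing away property}, which I would then use only to guarantee that $\beta$ exists in $C$.
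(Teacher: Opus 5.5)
Your Step 1 is exactly the paper's setup. Step 3, however, carries the whole argument and rests on a false identity.

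Take a level-crossing edge $uv$ with $d(u,C)>r\ge d(v,C)$ and $uv\notin\supp\beta$. Then $\lambda(uv)=\nu(u)+\nu(v)$, whereas $\lambda'(uv)=\nu(v)$; this is what your own Step 2 formula $\lambda'=\beta+\bfd\nu'$ gives. So the new crossing edges are not ``exactly the edges of $\lambda$ crossing level $r$'': every crossing edge with both endpoints in $\nu$ is added. Write $S=\nu\cap\{d(\cdot,C)>r\}$ and $E(P,Q)$ for the set of edges joining $P$ to $Q$. The correct bookkeeping is
\[
|\lambda'|-|\lambda| = |E(S,\nu\ssm S)|-|E(S,Y^0\ssm\nu)|,
\]
and nothing in your argument controls its sign.

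The coarea estimate used to pick a good level is also false. If $\nu\supseteq\calN_N(C)$ with $N$ well beyond your annulus, then $\lambda$ has no edges in the annulus at all. Yet at every level $r$ in the annulus, $\bfd(\nu\mathbf 1_{\{d(\cdot,C)>r\}})$ contains every edge crossing level $r$.

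A telltale sign is that Step 3 never uses hyperbolicity or $D\ge\bdelta$, and some geometric input is needed. The paper supplies it through \Cref{lem: more far than close}, which comes from exponential growth of balls. Take a vertex $x\in\nu$ farthest from $C$ and set $R=d(x,C)$. Since $\nu\subseteq\calN_R(C)$, every edge from $x$ to $\calB_D(x)\ssm\calN_R(C)$ lies in $\bfd\nu$, hence in $\lambda$ once $R$ exceeds the support of $\beta$. For $D\ge\bdelta$ and $R\ge\bdelta(D)$ these are more than half of the edges at $x$. Hence $|\lambda+\bfd x|<|\lambda|$, contradicting minimality. So $\nu\subseteq\calN_{\bdelta(D)}(C)$, and $\lambda=\beta+\bfd\nu$ is $\bdelta(D)$-taut.

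Your truncation can be repaired the same way. Cut just below the outermost layer of $\nu$, so that $S$ is that layer, and apply the lemma at each $x\in S$ to get $|E(S,\nu\ssm S)|<|E(S,Y^0\ssm\nu)|$. With that choice, the coarea selection of a level and the component discussion in Step 2 are unnecessary.
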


We will need the following geometric lemma that tells us that far away from a quasiconvex set, there are more elements in a large ball that are further away from the set than closer to it. 
\begin{lemma}\label{lem: more far than close}
    For all $\kappa\ge 0$ if $C\subseteq X$ is $\kappa$-quasiconvex, $D>\bdelta(\kappa)$ and $R=d(x,C)\ge \bdelta(\kappa,D)$ then \begin{equation}\label{eq: more outside than in}
        |\calB_D(x)\cap \calN_{R}(C)|<\tfrac12|\calB_D(x)|
    \end{equation}
\end{lemma}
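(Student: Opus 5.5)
We prove \eqref{eq: more outside than in} by showing that most of $\calB_D(x)$ lies strictly farther from $C$ than $x$ does. The guiding picture is the tree case: the ball around $x$ has one branch pointing toward $C$ and several branches pointing away. In a hyperbolic group the analogue is that the points of $\calB_D(x)$ within distance $R$ of $C$ all lie near the geodesic from $x$ toward $C$. Concretely, we aim to show that $\calB_D(x)\cap\calN_R(C)$ is contained in a ``shadow'' set of size $o(|\calB_D(x)|)$ as $D\to\infty$, uniformly in $x$.

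\textbf{Step 1 (localization).} Let $x'=\pi_C(x)$ and fix a geodesic $[x,x']$. Take $z\in\calB_D(x)$ with $d(z,C)\le R$, and let $z'=\pi_C(z)$.
\begin{itemize}
\item By \Cref{facts about qc sets}\ref{nearest point quadrilateral}, or directly by thin quadrilaterals on $x,z,z',x'$, the Gromov product satisfies $\gen{z,x'}_x\ge d(x,z)-\bdelta(\kappa)$.
\item The reason is that projecting onto a quasiconvex set $C$ can only decrease distances to $C$ by a bounded amount unless one moves toward $C$. Using $R=d(x,C)\ge \bdelta(\kappa,D)$ and $d(x,z)\le D$, the geodesic $[x,z]$ must fellow-travel $[x,x']$ up to distance roughly $d(x,z)$, with an error of $\bdelta(\kappa)$.
\item Equivalently, $z$ lies within $\bdelta(\kappa)$ of the point of $[x,x']$ at distance $d(x,z)$ from $x$. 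The precise estimate comes from the 4-point condition: $d(z,C)\le d(x,C)$ together with $d(z,x')\ge d(z,z')-\bdelta$ forces $d(z,x')\le d(x,x')+\bdelta$. Hence $\gen{z,x'}_x\ge d(x,z)-\bdelta$.
\end{itemize}

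\textbf{Step 2 (counting).} By Step 1, $\calB_D(x)\cap\calN_R(C)$ is contained in the $\bdelta(\kappa)$-neighbourhood of the segment of $[x,x']$ of length $D$ starting at $x$. (This segment exists because $R\ge D$, which we may assume.) That neighbourhood has at most $\bdelta(\kappa)\cdot D$ vertices. On the other hand, $G$ is non-elementary hyperbolic, so it has exponential growth: $|\calB_D(x)|\ge e^{cD}$ for some $c=\bdelta>0$, uniformly in $x$ since the action is cocompact. Choosing $D>\bdelta(\kappa)$ gives $\bdelta(\kappa)D<\tfrac12 e^{cD}$, which yields \eqref{eq: more outside than in}.

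\textbf{Main obstacle.} The delicate point is Step 1, namely making the fellow-travelling estimate uniform. The constant must depend only on $\kappa$ and not on $D$; otherwise the linear bound in Step 2 fails. We also need $R$ large compared to $D$ so that $z'$ stays far from $x$ and the quadrilateral argument applies. If the direct estimate proves awkward, a fallback is to compare $d(z,C)$ with the Busemann-type function $d(z,x')-d(x,x')$. Up to $\bdelta(\kappa)$ error this equals $d(z,C)-R$ on $\calB_D(x)$ when $R\gg D$, and the sublevel set $\{z: d(z,x')\le d(x,x')+\bdelta\}\cap\calB_D(x)$ is a horoball-like region. That region is contained in a $\bdelta$-neighbourhood of $[x,x']$ by the same thin-triangle argument.
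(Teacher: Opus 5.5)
Your Step 1 is false, and the linear count in Step 2 fails with it. The error is in your last bullet. From $d(z,x')\le d(x,x')+\bdelta$ you only get
\[
\gen{z,x'}_x=\tfrac12\bigl(d(x,z)+d(x,x')-d(z,x')\bigr)\ge \tfrac12 d(x,z)-\bdelta,
\]
not $d(x,z)-\bdelta$, and the factor $\tfrac12$ cannot be removed. Take $C=\{x'\}$ (which is $0$-quasiconvex) with $R=d(x,x')\ge D$, and let $w$ be the point of $[x,x']$ at distance $D/2$ from $x$. For every $z\in\calB_{D/2}(w)$ the triangle inequality gives $d(x,z)\le D$ and $d(z,C)\le d(z,w)+d(w,x')\le D/2+(R-D/2)=R$. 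Hence $\calB_{D/2}(w)\subseteq \calB_D(x)\cap\calN_R(C)$. Since $G$ has exponential growth, this set has exponentially many vertices in $D$. So it does not fit in $\bdelta(\kappa)\cdot D$ vertices, let alone in a $\bdelta(\kappa)$-neighbourhood of a segment. Your tree picture misses exactly these points: walk $D/2$ towards $x'$, then $D/2$ into any side branch, and you stay within $R$ of $C$. Your fallback has the same defect. A horoball-like sublevel set through $x$ meets $\calB_D(x)$ in a ``half-ball'' of radius about $D/2$, not in a thin tube around $[x,x']$.

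What is true, and what the paper proves, is the halved statement. Item (5) of \Cref{facts about qc sets} gives $d(x',z')\le\bdelta(\kappa)$ once $R\ge\bdelta(\kappa,D)$, so $d(z,x')\le R+\bdelta(\kappa)$. Thinness of the triangle $x,z,x'$ then yields $d(z,w)\le D/2+\bdelta(\kappa)$, i.e.\ $\calB_D(x)\cap\calN_R(C)\subseteq\calB_{D/2+\bdelta(\kappa)}(w)$. Step 2 must therefore compare two exponentially large balls, and a lower bound $|\calB_D(x)|\ge e^{cD}$ alone is not enough. You need growth estimates with matching exponents, $\tfrac1K e^{hr}\le|\calB_r|\le Ke^{hr}$ (Coornaert). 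These give $|\calB_{D/2+\bdelta(\kappa)}(w)|\le \bdelta(\kappa)e^{-hD/2}|\calB_D(x)|<\tfrac12|\calB_D(x)|$ for $D\ge\bdelta(\kappa)$.
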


\begin{proof}
    Let $y\in \calB_D(x)$. Let $x',y' \in C$ be respectively  nearest points to $x,y$. 
    We have $\gen{x,y}_{x'}\ge R-D-\bdelta$. By \Cref{facts about qc sets}\ref{nearest point quadrilateral}, for $R\ge \bdelta(\kappa,D)$ we have $d(x',y')\le \bdelta(\kappa)$.



    
    Now, assume further that $d(y,C)\le d(x,C)=R$. 
    Then, $d(y,x')\le d(y,y')+d(y',x')\le  R+\bdelta(\kappa)$.

    Let $w$ be the point at distance $\tfrac D2$ away from $x$ on the geodesic $[x,x']$. See \Cref{fig:ball intersection nbd}.
    \begin{figure}
        \centering
        \includegraphics[height=0.2\textheight]{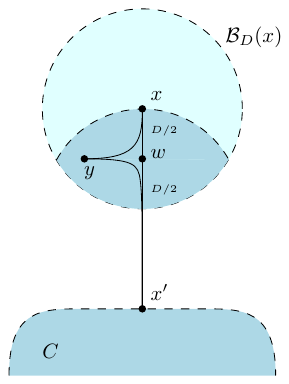}
        \caption{Intersection of a ball and a neighborhood of a quasiconvex set.}
        \label{fig:ball intersection nbd}
    \end{figure}
    We claim that $d(y,w)\le \tfrac D2+ \bdelta(\kappa)$:

    The triangle $xyx'$ is thin, and so $w$ is either $\bdelta$ close to the geodesic $[x,y]$ or to the geodesic $[y,x']$. In the former case, $$d(w,y)\le d(x,y)-d(x,w)+\bdelta = D/2+\bdelta.$$
    In the latter, $$d(w,y)\le d(y,x')-d(w,z)+\bdelta\le (R+\bdelta(\kappa)) - (R-\tfrac D2)+\bdelta=\tfrac D2 + \bdelta(\kappa).$$

    We have thus shown that 
    $$\calB_D(x)\cap \calN_R(C) \subseteq \calB_{D/2+\bdelta(\kappa)}(w)$$

    By \cite{coornaert1993mesures}, balls grow exponentially in $X$, and so
    for $D\ge \bdelta(\kappa)$ we have $$|\calB_{D/2+\bdelta(\kappa)}(w)|<\tfrac12 |\calB_D(x)|$$
    which together with the previous inclusion proves the lemma.
\end{proof}

\begin{proof}[Proof of \Cref{minimal is taut}]
Let $C = \conv(\alpha)$.
By \Cref{lem: primitive in convex hull}, there exists $\rho=\rho(X)$ and $\beta'\in C^1_c(Y)$ such that $\bfd \beta' = \alpha$ and $\beta'$ is supported in $\conv_{\rho}(\alpha)$.
Let $\beta \in C^1_c(Y)$ be a minimal primitive of $\alpha$.
Consider $\gamma = \beta+\beta'$. Clearly, $\bfd\gamma =0$. Since $Y$ is one-ended and $H^1(Y)=0$, we have  $H^1_c(Y)=0$.
Therefore, there exists $\nu\in C^0_c(Y)$ with $\bfd\nu = \gamma$.

Let $x\in \nu$ be the furthest point from $C$. 
Let $R=d(x,C)$. 
The element $\beta_1 = \beta+\bfd x$ is a (compactly supported) primitive for $\alpha$, since $\bfd \beta_1 = \bfd\beta=\alpha$.
We will show that if $D\ge \bdelta$ and $R\ge \bdelta(D)$, then $|\beta_1|<|\beta|$, contradicting minimality of $\beta$.

Consider the neighbors of $x$ in $Y$. By definition of the Rips complex, these are exactly the vertices $y\ne x$ in the ball $\calB_D(x)$.

By definition of $R$, $\nu$ is supported in $\calN_{R}(C)$. 
In particular, if the neighbor $y$ of $x$ is not in $\calN_{R}(C)$ then it is not in $\nu$, and the edge $e$ between $x$ and such $y$ belongs to $\bfd\nu = \gamma = \beta+\beta'$. 
Moreover, if $R > \rho$ then $e$ is necessarily an edge of $\beta$ (as it cannot be an edge of $\beta'$ which is contained in $\calN_\rho(C)$).
By \Cref{lem: more far than close}, if $D\ge \bdelta$ and $R>\max\{\bdelta(D),\rho\}=\bdelta(D)$ then most of the neighbors of $x$ are not in $\calN_{R}(C)$, and so $|\beta_1|=|\beta + \bfd x|<|\beta|$, in contradiction to the minimality of $\beta$.
\end{proof}

\subsection{Local minimizers are taut}

Recall that the $R$-locally minimal locus $\overline\calF_R$ is the induced graph on the connectors $\bar\bfc\in \overline\calF^0$ for which $\calB^*_R(\bar\bfc)$ is simple and minimal.

\begin{lemma}\label{geodesic outside convex set}
    Let $T_1\ge 0$, and $T_2,T_3\ge \bdelta(T_1)$. If $C=\conv(A)$ for some $A\subset X^0$ and if $w,x,y\in X$ satisfy 
    \begin{align}
     &d(w,[x,y])\le T_1, \\ 
     &\label{eq: w is the maximal} d(w,C) \ge \max \{d(x,C),d(y,C)\}\\
    &\label{eq: x options} x \in C \quad \text{ or }\quad d(w,x)\ge T_2\\
    &y \in C \quad \text{ or }\quad d(w,y)\ge T_2
    \end{align}
    then 
    $d(w,C)\le T_3$. 
\end{lemma}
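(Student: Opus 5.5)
Let $p\in[x,y]$ be a point with $d(w,p)\le T_1$. Since $C=\conv(A)$ is $\bdelta$-quasiconvex by \Cref{facts about qc sets}\ref{fact: convexity of convex hulls}, the distance function $z\mapsto d(z,C)$ restricted to the geodesic $[x,y]$ is coarsely convex. More precisely, writing $x',y'$ for nearest points of $x,y$ in $C$, the geodesic $[x,y]$ lies in the $\bdelta$-neighbourhood of $[x,x']\cup[x',y']\cup[y',x]$. The middle side is in $\calN_\bdelta(C)$, and along $[x,x']$ the distance to $C$ is coarsely $d(\cdot,x')$. This suggests the following plan: argue by contradiction, assuming $d(w,C)>T_3$ for $T_3$ large, and show that one of the endpoints $x,y$ must be strictly further from $C$ than $w$ (up to the slack $T_1$), contradicting \eqref{eq: w is the maximal}.

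\textbf{Step 1.} Set $R=d(w,C)$, so $d(p,C)\ge R-T_1$. First I rule out the case where $p$ is $\bdelta$-close to $[x',y']$: then $d(p,C)\le\bdelta$, which contradicts $R>T_3\ge T_1+\bdelta$. Hence $p$ is $\bdelta$-close to one of the sides, say $[x,x']$ (the case $[y,y']$ is symmetric). Concretely, $p$ is within $\bdelta$ of a point $p'\in[x,x']$.

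\textbf{Step 2.} Since $x'$ is a nearest point, for points on $[x,x']$ we have $d(p',C)\ge d(p',x')-\bdelta(\kappa)$, using the standard fact that geodesics to a nearest-point projection move coarsely directly away from a quasiconvex set, as in the proof of \Cref{facts about qc sets}\ref{nearest point quadrilateral}. Then
\[
d(x,C)=d(x,x')=d(x,p')+d(p',x')\ge d(x,p')+d(p',C)\ge d(x,p')+R-T_1-\bdelta .
\]
Combined with $d(x,C)\le R$, this gives $d(x,p')\le T_1+\bdelta$, so $d(w,x)\le 2T_1+\bdelta$. Now use \eqref{eq: x options}. If $d(w,x)\ge T_2$ with $T_2>2T_1+\bdelta$, we have a contradiction. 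Otherwise $x\in C$, so $d(w,C)\le d(w,x)\le 2T_1+\bdelta$, which contradicts $R>T_3$ once $T_3\ge 2T_1+\bdelta$. Either way $d(w,C)\le T_3$.

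The main obstacle is Step 2: extracting the clean inequality $d(x,x')\ge d(x,p')+d(p',C)-\bdelta$. This needs the lower bound $d(p',C)\ge d(p',x')-\bdelta$ for $p'$ on a geodesic to a nearest point, which is where quasiconvexity of $C$ is used. I would prove it by projecting $p'$ to $C$ at a point $p''$ and applying \Cref{facts about qc sets}\ref{nearest point quadrilateral} to $x,p'$ to get $d(x',p'')\le\bdelta$. The other steps are bookkeeping of constants.
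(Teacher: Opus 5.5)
Your proof is correct and follows essentially the same route as the paper: the thin quadrilateral $x,y,x',y'$ (your third side should read $[y',y]$), the middle side $[x',y']$ handled by quasiconvexity of $C$, a direct length comparison along $[x,x']$, and then the dichotomy $x\in C$ or $d(w,x)\ge T_2$. The ``main obstacle'' you flag in Step 2 is not actually needed: your displayed chain only uses $d(p',x')\ge d(p',C)$, which is trivial because $x'\in C$, together with the $1$-Lipschitz property of $d(\cdot,C)$ (the paper likewise just bounds $d(w,C)\le d(w,x')$).
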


\begin{proof}
Let $x,y,w$ be as in the lemma, and let $x',y'\in C$ be nearest points to $x,y$ respectively. 
Since $d(w,[x,y])\le T_1$ and the quadrilateral $x,y,x',y'$ is $\bdelta$-thin we see that one of the following holds:
\begin{enumerate}[label= (\alph*)]
    \item \label{w,x,x'}$d(w,[x,x'])\le T_1+\bdelta$,
    \item \label{w,x',y'}$d(w,[x',y'])\le T_1+\bdelta$ or
    \item \label{w,y,y'}$d(w,[y',y])\le T_1+\bdelta$.
\end{enumerate} 

In Case \ref{w,x,x'}, we get $$d(w,C)\le  d(x,x')-d(x,w)+2T_1+\bdelta.$$
There are two cases to consider, according to \eqref{eq: x options}: If $d(x,w)\ge T_2 \ge 2T_1+ \bdelta$ then we have $d(w,C) <d(x,x')$ contradicting \eqref{eq: w is the maximal}.
Otherwise, $x\in C$ so $x'=x\in C$ and we get the desired bound $d(w,C)\le 2T_1+\bdelta$.

Case \ref{w,y,y'} is done similarly.

In Case \ref{w,x',y'}, we get the desired bound by $\bdelta$-quasiconvexity of $C$:  $d(w,C)\le T_1+\bdelta$.
\end{proof}

\begin{lemma}\label{local minimizers are taut}
    For $D\ge \bdelta$, $R,T\ge \bdelta(D)$ any subtrack $\overline\lambda$ in $\overline\calF_R$ is $T$-taut. 
\end{lemma}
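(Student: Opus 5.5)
Let $\overline\lambda\subseteq\overline\calF_R$ be a subtrack, lift it to a simple subtrack $\lambda\subseteq\calF$ in $Y$, and set $C=\conv(\bfd\lambda)$. The plan is a local-to-global argument. Suppose $\lambda$ is not $T$-taut. Choose an edge $e$ of $\lambda$ whose endpoint $w$ maximizes $d(w,C)$ over all endpoints of edges of $\lambda$, and assume $d(w,C)>T$. Let $\bfc$ be the connector of $\lambda$ on $e$. Since $\bfc\in\overline\calF_R$, the ball $\mu:=\calB^*_R(\bfc)$ is simple and minimal. By \Cref{minimal is taut}, $\mu$ is therefore $\bdelta(D)$-taut, so $e\subseteq\conv_{\bdelta(D)}(\bfd\mu)$. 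Hence $w$ lies within $\bdelta(D)$ of a geodesic $[x,y]$ with $x,y$ in 2-simplices of $\bfd\mu$, and we may take $x,y$ to be vertices of these simplices, at the cost of another $D$. This sets up \Cref{geodesic outside convex set} with $T_1=\bdelta(D)$.

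Next we sort each coboundary simplex $\Delta$ of $\mu$ (and each of $x,y$) into one of two types. Either $\Delta$ is also in $\bfd\lambda$, so that $x\in\calN_D(C)$ and in particular $x\in C$ up to a bounded adjustment. Or $\Delta$ is a coboundary simplex created by truncation: it meets both $\mu$ and $\lambda\setminus\mu$, and this happens only at the $d^*$-boundary of $\mu$, at $d^*$-distance about $R$ from $\bfc$.

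In the second case we need $d(w,x)\ge T_2$, which requires converting intrinsic distance $d^*$ into ambient distance $d$. Inequality \eqref{eq: d d* relation} only gives the wrong direction, $d\le D d^*$, so we need a reverse bound. The natural tool is minimality of $\mu$ together with the Cheeger-type geometry: a minimal subtrack cannot wander back close to $e$. Concretely, the ball $\calB_{T_2}(w)$ contains at most $\bdelta(D,T_2)$ edges, so the part of $\mu$ near $w$ has bounded size $\bdelta(D,T_2)$. Since $\mu$ is connected and the $d^*$-ball has radius $R$, every connector of $\mu$ within distance $T_2$ of $w$ is at $d^*$-distance at most $\bdelta(D,T_2)$ from $\bfc$ (assuming the relevant component near $w$ is the one through $\bfc$, which needs an argument). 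Choosing $R\ge\bdelta(D,T_2)$ then forces the truncation simplices to satisfy $d(w,x)\ge T_2$.

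The maximality hypothesis \eqref{eq: w is the maximal} holds because $w$ maximizes distance to $C$ among the vertices of $\lambda$, and $x,y$ are such vertices or lie within $D$ of one (absorbed by adjusting constants). \Cref{geodesic outside convex set} then gives $d(w,C)\le T_3=\bdelta(D)$, a contradiction once $T>\bdelta(D)$.

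The main obstacle is the reverse estimate relating $d^*$ and $d$ for the truncation boundary. A minimal subtrack could a priori return near $w$ through a different branch. I would try first to handle this by replacing $\mu$ with the connected component of $\mu\cap\calN_{T_2}(w)$ containing $\bfc$. This component is a subtrack of the minimal $\mu$, hence minimal, as in the proof of \Cref{lem: sufficiently congruent are minimal}. Its boundary coboundary simplices lie either in $\bfd\lambda$ or at distance roughly $T_2$ from $w$, provided $R\ge\bdelta(D,T_2)$ so that the component does not reach the $d^*$-boundary of $\mu$.
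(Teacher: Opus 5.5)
Once you replace $\mu$ by the component $\mu'$ of $\mu\cap\calN_{T_2}(w)$ through $\bfc$, your argument is essentially the paper's. First, $\mu'$ has at most $\bdelta(D,T_2)$ edges, so it stays away from the $d^*$-boundary of $\calB^*_R(\bfc)$ once $R\ge\bdelta(D)$; this is the paper's $R_0$. Second, $\mu'$ is minimal because a sub-cochain of a minimizer is a minimizer: if $\bfd\gamma=\bfd\mu'$ and $|\gamma|<|\mu'|$, then $\gamma+(\mu+\mu')$ has coboundary $\bfd\mu$ and is smaller than $\mu$. This is the correct justification, not \Cref{lem: sufficiently congruent are minimal}. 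Hence $\mu'$ is taut by \Cref{minimal is taut}, and \Cref{geodesic outside convex set} together with maximality of $w$ finishes. The reverse $d^*$-versus-$d$ estimate you worry about in your third paragraph is never needed.

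The genuine variation is how you treat $\bfd\lambda$ near $\bfc$. The paper first disposes of the case where $\calB^*_R(\bfc)$ contains an edge incident to $\bfd\lambda$, giving $d(\bfc,C)\le DR$ directly. Afterwards every coboundary simplex of the localized piece is far from $\bfc$, and only the alternative $d(w,x)\ge T_2$ of \Cref{geodesic outside convex set} is used. You instead allow simplices of $\bfd\lambda$ inside $\bfd\mu'$ and use the alternative $x\in C$. This merges the two cases and gives $T=\bdelta(D)$ independent of $R$, which matches the quantifiers in the statement. The paper's proof only gives $T=\bdelta(D,R)$.

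There is one omission you must repair. The ball $\mu=\calB^*_R(\bfc)$ is taken in the whole track of $\calF$, while $\lambda$ may be a proper subtrack: it ends where connectors stop being $R$-locally minimizing. So $\mu\subseteq\lambda$ can fail. Your dichotomy for coboundary simplices (either in $\bfd\lambda$, or truncations meeting $\lambda\setminus\mu$) presupposes $\mu\subseteq\lambda$. So does your maximality step, since $d(x,C)\le d(w,C)$ is only available when $x$ is an endpoint of an edge of $\lambda$. If $\mu'$ leaves $\lambda$, then $\bfd\mu'$ may contain dead-end simplices of the ambient track near $w$ that are not in $\bfd\lambda$, and its far simplices need not touch $\lambda$.

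The repair is immediate. Consider the first regular segment along which $\mu'$ exits $\lambda$. It lies in a $2$-simplex exactly one of whose edges meets $\lambda$, so that simplex belongs to $\bfd\lambda$, and it lies in $\calN_{T_2}(w)$. Hence $d(w,C)\le T_2$ directly. Otherwise $\mu'\subseteq\lambda$ and your argument goes through. This is precisely the job done by the paper's Case 1, which ensures $\calB^*_R(\bfc)\subseteq\lambda$ before localizing.
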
 

\begin{proof}
Let $\overline\lambda\subseteq \overline\calF_R$, and let $\lambda\subseteq \calF_R$ be its lift.
Let $C = \conv(\bfd\lambda)$.
Let $\bfc\in \lambda$ maximize $d(\bfc,C)$. The goal is to bound $d(\bfc,C)$ by a constant $T$ depending only on $D,R$. 

By \Cref{minimal is taut}, for $D\ge \bdelta$ and $T_1= \bdelta(D)$ every minimal 1-cocycle is $T_1$-taut. 
Let $T_2=\bdelta(T_1)=\bdelta(D)$ be as in \Cref{geodesic outside convex set}.

Let $R_0=  \bdelta(T_2)=\bdelta(D)$ be the number of edges in the ball of radius $T_2$ in $Y$.
Let $R\ge R_0$.
Consider the ball $\beta = \calB^*_R(\bfc)$. We have the following two cases:

\textbf{Case 1. } $\beta$ contains an edge of $Y$ incident to $\bfd\lambda$.
Then, $$d(\bfc,C)\le d(\bfc,\bfd\lambda)\le D\cdot d_*(\bfc,\bfd\lambda)\le T_3  = \bdelta(D,R).$$

\textbf{Case 2. } $\beta$ does not meet $\bfd\lambda$. In this case, $\beta \subseteq \lambda$. 
Consider $\beta' = \beta \cap \calB_{T_2}(\bfc)$.
The subtrack $\beta'$ is minimal (since $\beta$ is) and so it is $T_1$-taut. 
Therefore $\bfc \in \conv_{T_1}(\bfd \beta')$. Hence there exist $\Delta_1,\Delta_2\in \bfd\beta'$ such that 
$d(\bfc,[\Delta_1,\Delta_2])\le T_1$.
Consider the two edges $e_1,e_2$ in $\lambda$ such that $d(\bfc,[e_1,e_2])\le T_1$.

By the definition of $\beta'$ the coboundary $\bfd\beta'$ is supported outside $\calB_{T_2}(\bfc)$, so $d(\bfc,\bfc'),d(\bfc,\bfc'')\ge T_2$.
We must have $$d(\bfc,C)\le T_2 = \bdelta(D)$$ as otherwise by \Cref{geodesic outside convex set} $d(\bfc,C) < \max\{d(\bfc',C),d(\bfc'',C)\}$ contradicting the maximality of $\bfc$.

In either case we get $d(\bfc,C)\le T:=\max \{T_2,T_3\} = \bdelta(D,R)$.
\end{proof}

\section{Thickness}
\label{sec: thick}

    In the previous section we saw that each locally minimizing track is taut. In particular, $|\lambda|$ can be bounded by  $|\conv(\bfd\lambda)|$.
    In this section, we will use the fact that $G$ is geometrically rigid to show that $|\conv(\bfd\lambda)|\le |\bfd\lambda|$.
    This is done by first showing that $\conv(\bfd\lambda)$ is ``barycentrically thick'' - which roughly says that it does not contain long bottlenecks. 

\subsection{Geometrically rigid hyperbolic groups}

\begin{definition}
    A hyperbolic group $G$ is \emph{geometrically rigid} if it is one-ended and $\partial G$ does not contain a cut pair.
\end{definition}

For a point $x_0\in X$ and $\xi\in \partial X$, $\rho\ge r\ge 0$ define the \emph{annulus}
$$A_r(x_0,\xi,\rho) = \{\zeta\in \partial X\;|\;\rho-r\le \gen{\zeta,\xi}_{x_0}\le \rho+r\}$$

\begin{lemma}\label{annuli in the boundary}
    Let $G$ be a geometrically rigid hyperbolic group. There exists $\Delta$ such that for all $x_0$ and $\xi\in \partial X$ and $\rho\ge r$ with $\rho -r\ge \bdelta$, the set $A_r(x_0,\xi,\rho)$ is path connected in $A_{r+\Delta}(x_0,\xi,\rho)$
\end{lemma}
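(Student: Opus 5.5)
We argue by contradiction with a compactness/limiting argument, using cocompactness of the $G$-action to normalize the basepoint and the absence of cut pairs to rule out the degenerate limit. First reduce to a model situation. Since $G$ acts cocompactly on $X$, translate by $G$ so that $x_0$ lies in a fixed compact fundamental domain; Gromov products change by at most $\bdelta$ when $x_0$ moves a bounded amount, and this is absorbed into $\Delta$. Next, move the basepoint along a geodesic ray $[x_0,\xi)$ to the point $x_\rho$ at distance $\rho$ from $x_0$. For $\zeta\in\partial X$ with $\gen{\zeta,\xi}_{x_0}\approx\rho$ the condition $\rho-r\le\gen{\zeta,\xi}_{x_0}\le\rho+r$ becomes, up to $\bdelta$, the condition that the geodesic $[x_\rho,\zeta)$ leaves the ray at distance $\le r+\bdelta$ from $x_\rho$ in the ``forward'' direction and that $\zeta$ is at visual distance roughly $e^{-r}$-comparable from both $\xi$ and the backward endpoint $\xi^-$ (the endpoint of the ray extended backwards, or $x_0$ itself). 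Translating $x_\rho$ back into the fundamental domain, the annulus becomes, in the visual metric based at a point of the compact fundamental domain, the set of $\zeta$ at ``Gromov distance'' within $[\,\cdot\,]$ from two points $\xi,\eta$ (images of $\xi$ and of $x_0$ or $\xi^-$), with $\gen{\xi,\eta}$ bounded. The hypothesis $\rho-r\ge\bdelta$ guarantees that $x_0$ lies far enough behind $x_\rho$ that this picture is valid.

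So the statement reduces to: there is $\Delta$ such that for every basepoint $p$ in the fundamental domain, every pair of points $\xi,\eta\in\partial X\cup X$ with $\gen{\xi,\eta}_p\le\bdelta$, and every $r\ge0$, the set $\{\zeta: \gen{\zeta,\xi}_p\le r,\ \gen{\zeta,\eta}_p\le r\}$ (the complement of two small neighbourhoods of $\xi,\eta$) is path connected inside the slightly larger set with $r$ replaced by $r+\Delta$. For large $r$ I would handle it by the scaling trick again, reducing to small $r$. For bounded $r$ this is exactly the statement that removing small neighbourhoods of two points from $\partial X$ does not disconnect, uniformly. Here I invoke geometric rigidity: $\partial X$ is connected, locally connected (Bestvina--Mess/Bowditch/Swarup for one-ended hyperbolic groups), and has no cut points and no cut pairs.

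The key step, which I expect to be the main obstacle, is the uniformity: showing that a single $\Delta$ works. I would argue by contradiction: suppose for each $n$ there are $p_n,\xi_n,\eta_n,r_n$ with the $r$-region not path connected inside the $(r+n)$-region. Pass to a subsequence with $p_n$ constant (finitely many vertices in the fundamental domain) and $\xi_n\to\xi$, $\eta_n\to\eta$ in the compact space $\partial X\cup X$, $\xi\ne\eta$ by the bounded Gromov product. Two components of the $r_n$-region that cannot be joined in the $(r_n+n)$-region produce, in the limit, a separation of $\partial X\setminus\{\xi,\eta\}$ (or $\partial X\setminus\{\xi\}$ if $\eta\in X$): using local connectedness of $\partial X$ (uniform via compactness, i.e.\ $\partial X$ is a Peano continuum so small sets lie in small connected sets), any path joining the limits of the two components in $\partial X\setminus\{\xi,\eta\}$ has compact image, hence avoids a fixed neighbourhood of $\{\xi,\eta\}$, and can be perturbed to join the $n$-th components within the $(r_n+n)$-region for large $n$. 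This contradicts the assumption, since no cut pair (and no cut point) means $\partial X\setminus\{\xi,\eta\}$ is connected, and as an open subset of a Peano continuum it is path connected. Care is needed to ensure the two bad components each stay a definite distance from $\{\xi,\eta\}$ in the limit; this is why the regions are annuli with bounded visual size after rescaling, and is where the extra room $\Delta$ is used.
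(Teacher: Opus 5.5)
Your argument is correct, but it takes a different route from the paper. The paper's proof is essentially a citation. It regards $A_r(x_0,\xi,\rho)$ as, coarsely, the difference of two concentric visual balls about $\xi$, and quotes Lemma 6.1 of \cite{lazarovich2023commensurated} for the required connectivity of such annuli.

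You instead reprove that input from scratch:
\begin{itemize}
\item Rebasing at the point $x_\rho$ of the ray $[x_0,\xi)$ turns $A_r$, up to $\bdelta$, into the set where $\max\{\gen{\zeta,\xi}_{x_\rho},\gen{\zeta,x_0}_{x_\rho}\}\le r$. This is the precise form of your somewhat garbled ``forward direction'' description, and it is the form you actually use.
\item Cocompactness puts the basepoint in a finite set.
\item For bounded $r$, a compactness argument reduces uniformity to the connectedness of $\partial X\ssm\{\xi,\eta\}$ for a limiting pair. The pair is distinct because $\gen{\xi_n,\eta_n}_p\le\bdelta$. When $d(p,\eta_n)$ stays bounded, the second constraint becomes vacuous and you only need $\partial X\ssm\{\xi\}$ connected.
\item This is combined with local path connectedness of the Peano continuum $\partial X$ near the two limit points.
\end{itemize}

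What your route buys is a self-contained proof that shows exactly where the no-cut-pair hypothesis enters: the backward direction $x_0$ becomes the second removed point $\eta$. It also avoids the no-local-cut-point and annular-linear-connectivity literature. The costs are a non-explicit $\Delta$ and a thick-to-thin reduction that you only gesture at with ``the scaling trick again.''

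That reduction is easy, but it should be written out:
\begin{enumerate}
\item Fix $r_0\gg\bdelta$.
\item Cover $[\rho-r,\rho+r]$ by windows $[\rho_j-r_0,\rho_j+r_0]$ with centres $\rho_j\in[\rho-r,\rho+r]$ that overlap in length about $r_0$.
\item Observe that each overlap contains the level $\gen{\zeta,\xi}_{x_0}$ of some boundary point $\zeta$. Indeed, at any point $y$ of $[x_0,\xi)$ there is a ray from $y$ with bounded Gromov product with both $x_0$ and $\xi$, by cocompactness and non-elementarity.
\item Chain the thin-annulus paths. Each of them lies in $A_{r_0+\Delta_0}(x_0,\xi,\rho_j)\subseteq A_{r+r_0+\Delta_0}(x_0,\xi,\rho)$.
\end{enumerate}
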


\begin{proof}
    The set $A_r(x_0,\xi,\rho)$ can roughly be regarded as the difference of two balls in the visual metric.
    The lemma then follows from \cite[Lemma 6.1]{lazarovich2023commensurated}.
\end{proof}

\begin{lemma}\label{ball minus geodesic is connected}
    Let $G$ be a geometrically rigid hyperbolic group, then for all $\epsilon\ge0$, for all $\tau,R\ge \bdelta(\epsilon)$, for every bi-infinite geodesic $\gamma$ in $X$, and $x\in \gamma$, the subset $\calB_R(x)\ssm \calN_\tau(\gamma)$ is contained in a single connected component of $\calB_R(x) \ssm \calN_\epsilon(\gamma)$.
\end{lemma}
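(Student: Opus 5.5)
We reduce the statement to the boundary connectivity of \Cref{annuli in the boundary}, lifting paths from $\partial X$ back into the ball by radial projection, in the same way as in the proof of \Cref{lem: pushing away curves}. Let $\gamma$ be a bi-infinite geodesic with endpoints $\gamma^\pm\in\partial X$, and let $x\in\gamma$. For each $y\in \calB_R(x)\ssm\calN_\tau(\gamma)$ we choose, by \cite[Lemma 3.1]{bestvina1991boundary}, a geodesic ray $\eta_y$ from $x$ with $d(y,\eta_y)\le\bdelta$, and we let $\zeta_y\in\partial X$ be its endpoint. Since $y$ is far from $\gamma$ and $d(x,y)\le R$, thinness of the triangle $x,y,\gamma^\pm$ gives a lower bound on how far $\zeta_y$ is from $\gamma^\pm$, measured in Gromov products based at $x$.

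\textbf{Step 1: locating $\zeta_y$.} Since $y\notin\calN_\tau(\gamma)$, the ray $\eta_y$ leaves $\gamma$ before time roughly $\tau$, so $\gen{\zeta_y,\gamma^\pm}_x\le d(x,y)-\tau+\bdelta$. We also keep track of the level of $y$: the point $y$ sits near $\eta_y$ at distance $t_y=d(x,y)\le R$ from $x$.

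\textbf{Step 2: connecting two points.} Take $y,y'$ in $\calB_R(x)\ssm\calN_\tau(\gamma)$. We first slide each along its ray, inside the ball and away from $\gamma$ (distance to $\gamma$ does not decrease along $\eta_y$ after it departs $\gamma$, up to $\bdelta$), until both reach a common sphere of radius $\rho\approx R-\bdelta$. It then remains to connect $\zeta_y$ and $\zeta_{y'}$ by a path $\Gamma\subseteq\partial X$ that stays uniformly away from $\gamma^\pm$, i.e.\ with $\gen{\zeta,\gamma^\pm}_x\le \rho-\epsilon-\bdelta$ for all $\zeta\in\Gamma$. Once we have such a $\Gamma$, we project it to the sphere of radius $\rho$ along rays; as in \Cref{projection of connected is quasi connected}, the image is $\bdelta$-connected and lies in $\calB_R(x)$. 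Moreover, each point of the image lies near a ray diverging from $\gamma$ well before radius $\rho$, hence at distance $>\epsilon$ from $\gamma$. Together with the Rips edges of length $D\ge\bdelta$, this gives the desired path in $\calB_R(x)\ssm\calN_\epsilon(\gamma)$.

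\textbf{Step 3: the boundary path (the main obstacle).} The complement of $\{\gamma^+,\gamma^-\}$ in $\partial X$ is connected because there is no cut pair, but we need this quantitatively. Here is what we would try first. We cover the region $\{\zeta:\ \gen{\zeta,\gamma^\pm}_x\le s\}$ by annuli $A_r(x,\gamma^+,\rho')$ and $A_r(x,\gamma^-,\rho')$, using \Cref{annuli in the boundary} with uniform $\Delta$. Each annulus about $\gamma^+$ at depth $\rho'\le s$ is path connected inside its $\Delta$-thickening, and this thickening still avoids deep neighborhoods of $\gamma^\pm$. Annuli about $\gamma^+$ and about $\gamma^-$ at shallow depth intersect, since points with small Gromov product to both endpoints exist by one-endedness. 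We therefore move from $\zeta_y$ outward through the annuli about whichever endpoint it is closer to, up to depth $O(\Delta)$, cross over, and come back in to $\zeta_{y'}$. Each step loses only $\Delta$ in depth, so the requirement $\tau,R\ge\bdelta(\epsilon)$ absorbs all the losses. The delicate point is making the annulus depths and widths compatible with the hypothesis $\rho-r\ge\bdelta$ near $x$; we handle this by taking $\tau$ large enough that only annuli of depth $\ge\bdelta$ are ever needed.
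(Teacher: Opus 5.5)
\textbf{Verdict.} Your Steps 1 and 2 are fine. There is a genuine gap, however, exactly at the point you flagged: the crossing-over in Step 3 cannot be done with annuli based at $x$.

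\textbf{Why the crossing fails.} Since $x\in\gamma$, we have $\gen{\gamma^+,\gamma^-}_x\le\bdelta$. By the four-point condition, every $\zeta\in\partial X$ therefore satisfies $\min\{\gen{\zeta,\gamma^+}_x,\gen{\zeta,\gamma^-}_x\}\le\bdelta$.
\begin{itemize}
\item The annuli $A_r(x,\gamma^+,\rho')$ you may use are those to which \Cref{annuli in the boundary} applies and whose $\Delta$-thickening stays away from $\gamma^-$. The second requirement forces $\rho'-r-\Delta$ to exceed this four-point error.
\item So every point of such an annulus has $\gen{\zeta,\gamma^+}_x$ large and $\gen{\zeta,\gamma^-}_x\le\bdelta$. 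In other words, it projects to $\gamma$ strictly on the $\gamma^+$-side of $x$. The same holds symmetrically for $\gamma^-$.
\item Hence the two families of annuli are disjoint. The points with small Gromov product to \emph{both} endpoints, which is what one-endedness supplies, lie in neither family. No chain of such annuli can pass from one side to the other.
\item Shallower annuli do not help. Below the threshold $\rho'-r\ge\bdelta$ the lemma says nothing. Once $\rho'-r-\Delta\le 0$, the thickening contains $\gamma^-$, so the resulting path has no control near $\gamma^-$.
\item Enlarging $\tau$ is irrelevant: the crossing has to happen at bounded depth around $x$, whatever $\tau$ is.
\end{itemize}
Crossing this ``equator'' at $x$ while avoiding both $\gamma^+$ and $\gamma^-$ is exactly where the absence of the cut pair $\{\gamma^+,\gamma^-\}$ enters. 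That is essentially the content of the lemma itself, so it cannot be taken for granted.

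\textbf{The missing idea: move the basepoint far along $\gamma$.} Let $\xi=\gamma^+$, and let $x_0\in\gamma$ be at distance $\rho_0$ from $x$ on the $\gamma^-$-side.
\begin{itemize}
\item Up to $\bdelta$, the quantity $\gen{\zeta,\xi}_{x_0}-\rho_0$ is the signed position, relative to $x$, of the projection of $\zeta$ to $\gamma$.
\item So your whole band $\{\zeta:\gen{\zeta,\gamma^\pm}_x\le s\}$ coincides coarsely with the \emph{single} annulus $A_s(x_0,\xi,\rho_0)$. This annulus is admissible as soon as $\rho_0-s\ge\bdelta$.
\item By your Step 1, both $\zeta_y$ and $\zeta_{y'}$ lie in $A_{R-\tau+\bdelta}(x_0,\xi,\rho_0)$.
\item One application of \Cref{annuli in the boundary} then gives a path $\Gamma$ in $A_{R-\tau+\bdelta+\Delta}(x_0,\xi,\rho_0)$. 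Along $\Gamma$ this means $\gen{\zeta,\gamma^\pm}_x\le R-\tau+\Delta+\bdelta$.
\item This is exactly what your Step 2 needs once $\tau\ge\bdelta(\epsilon)$, and no chaining or crossing is required.
\end{itemize}
This is the paper's argument. With this replacement your proof goes through. The paper starts the rays at the projections of the points to $\gamma$ rather than at $x$, but that difference is immaterial.
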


\begin{proof}
Let $\epsilon \ge 0$, and $R\ge \tau\ge 0$. Let $\gamma$ be a bi-infinite geodesic and  $x\in \gamma$. 

Let $z_1,z_2\in \calB_R(x)\ssm \calN_\tau(\gamma)$. 
Let $z_1',z_2'$ be nearest points to $z_1,z_2$ in $\gamma$ respectively.
Then, since $z_i'$ is $\bdelta$ close to the geodesic $[x,z_i]$ we have 
\begin{equation}\label{eq: distance x and projection of z} 
d(x,z'_i) \le  d(x,z_i)-d(z_i',z_i)+\bdelta\le R-\tau+\bdelta.
\end{equation}

By \cite[Lemma 3.1]{bestvina1991boundary} there exists a geodesic ray $\zeta_i$ starting at $z_i'$ such that $d(z_i,\zeta_i)\le \bdelta$.
Let $\xi\in \partial X$ be one of the two ends of the bi-infinite geodesic $\gamma$.
Consider a point $x_0\in \gamma$ at distance $\rho$ from $x$, away from $\xi$.
It follows that $$ |\gen{\zeta_i,\xi}_{x_0} - \rho|\le |d(x_0,z'_i) - \rho|+\bdelta\le d(x,z'_i)+\bdelta\le R-\tau+\bdelta$$
In other words, $\zeta_i \in A_{R-\tau}(x_0,\xi,\rho).$

By \Cref{annuli in the boundary}, for $\rho -(R-\tau) \ge \bdelta$ there exists $\Delta$ such that $\zeta_1,\zeta_2$ are connected by a path $\Gamma$ in $A_{(R-\tau)+\Delta}(x_0,\xi,\rho).$

Let $A = \pi_{\calB_R(x)}(\Gamma)$. 
By \Cref{projection of connected is quasi connected}, the projection $\calN_\bdelta(A)$ is connected.
In fact, using $\bdelta$-thinness of triangles it is not hard to see that $A'=\calN_{\bdelta}(A)\cap \calB_R(x)$ is connected.

Note that all points in $A$ are contained in the sphere $\calS_R(x) = \{y\;|\;d(x,y)=R\}$.
Therefore, the intersection $A'\cap \calN_\epsilon(\gamma)$ is contained in the $\epsilon+\bdelta$ neighborhood of the two points $\{\gamma_-,\gamma_+\}=\gamma \cap \calS_R(x).$
However, since $\pi_\gamma(\Gamma) \subseteq \calB_{R-\tau+\Delta+\bdelta}(x)$ it follows that $\pi_\gamma(A')$ is contained in $\calB_{R-\tau+\Delta+\bdelta}(x)$.
Therefore, for $\tau\ge \bdelta(\epsilon)$, $A'\cap \calN_\epsilon(\gamma)=\emptyset$.

This shows that $\calB_R(x)\ssm \calN_\epsilon (\gamma)$ has a path between $z''_1= \pi_{\calB_R(x)}(\zeta_1)$ and $z''_2=\pi_{\calB_R(x)}(\zeta_2)$.

To finish the proof it suffices to show that $z_i$ can be connected to $z''_i$ in $\calB_R(x) \ssm \calN_\epsilon(\gamma)$.
We do it in 3 steps:
\begin{enumerate}
    \item $z_i$ is $\bdelta$ close to some point $\zeta_i$.
    Therefore, it can be connected to a point $w_i\in \zeta_i$ by a path of length $\bdelta$ that stays in $\calB_R(x)$.
    This path avoids $\calN_{\tau-\bdelta}(\gamma)$, and so for $\tau\ge \epsilon+\bdelta$ it avoids $\calN_{\epsilon}(\gamma)$.
    \item Using a subpath of $\zeta_i$ we can connect $w_i$ to the point $w'_i \in \zeta_i \cap \calS_R(x)$.
    Clearly, this path stays in $\calB_R(x)$
    \item Finally, the points $w'_i$ and $z''_i$ are $\bdelta$-close, and so they can be connected by a path of length $\bdelta$ in $\calB_R(x).$ \qedhere
\end{enumerate} 
\end{proof}

\subsection{Local minimizers are plump}

\begin{definition}\label{def: plump}
    For $L\ge \epsilon\ge 0$, a set $C$ is \emph{$(L,\epsilon)$-slender} at $x\in C$ if $\calB(x,L)\cap C \subseteq \calN_\epsilon(\gamma)$ for some geodesic segment $\gamma$.
    Otherwise we say $C$ is \emph{$(L,\epsilon)$-plump} at $x$.
\end{definition}

\begin{lemma}\label{local minimizers are plump}
    For $\epsilon\ge 0$, $D\ge \bdelta$, $L\ge \delta(D,\epsilon)$, $R\ge \bdelta(D,L,\epsilon)$ if $\lambda$ is a subtrack of the $R$-locally minimal locus $\calF_R$, then for all $\bfc\in \lambda$ such that $\bfc\notin \calN_R(\bfd\lambda)$ the subtrack $\lambda$ is $(L,\epsilon)$-plump at $x$.
\end{lemma}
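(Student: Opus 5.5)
The plan is to argue by contradiction, using that the subtrack near $\bfc$ is minimal. Suppose $\lambda$ is $(L,\epsilon)$-slender at $\bfc$. Let $x$ be a vertex of the edge of $Y$ carrying $\bfc$, and let $\gamma$ be a geodesic segment with $\calB_L(x)\cap\lambda\subseteq\calN_\epsilon(\gamma)$. Since $\bfc$ is far from $\bfd\lambda$ and passes near $x$, I first extend $\gamma$ to a bi-infinite geodesic through (a point $\bdelta$-close to) $x$; this costs only a $\bdelta$ in $\epsilon$. Choosing $R\ge\bdelta(D,L)$, the relation \eqref{eq: d d* relation} shows that every edge of $\lambda$ meeting $\calB_L(x)$ carries a connector of $\calB^*_R(\bfc)$. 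The latter is minimal, because $\bfc\in\calF_R$. Moreover, because $\bfc\notin\calN_R(\bfd\lambda)$, the restriction of $\lambda$ to the subcomplex spanned by $\calB_L(x)$ is a cocycle there.

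\textbf{Step 1 (local primitive).} For $D\ge\bdelta$ the Rips complex on the ball $\calB_L(x)$, which is $\bdelta$-quasiconvex, is contractible. Hence $\lambda|_{\calB_L(x)}=\bfd\nu$ for some set $\nu$ of vertices of $\calB_L(x)$, where $\bfd$ is taken inside the ball. The edges of $\bfd\nu$ lie in $\calN_\epsilon(\gamma)$, so $\nu$ is constant on each connected component of $\calB_L(x)\ssm\calN_{\epsilon+D}(\gamma)$.

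\textbf{Step 2 (no-cut-pair input).} By \Cref{ball minus geodesic is connected}, applied with $\epsilon+D$ in place of $\epsilon$, the set $\calB_L(x)\ssm\calN_\tau(\gamma)$ lies in a single such component, for some $\tau=\bdelta(D,\epsilon)$ and $L\ge\bdelta(D,\epsilon)$. Replacing $\nu$ by its complement in the ball if necessary, I may therefore assume $\nu\subseteq\calN_\tau(\gamma)\cap\calB_L(x)$. Since the edge at $\bfc$ lies in $\bfd\nu$, $\nu$ contains a vertex within distance $D$ of $x$.

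\textbf{Step 3 (cut-and-paste against minimality).} For $r\in[L/2,L-D]$, set $\nu_r=\nu\cap\calB_r(x)$, viewed as a $0$-cochain on $Y$. Then $\lambda+\bfd\nu_r$ has the same coboundary as $\lambda$ and is supported in the region where $\calB^*_R(\bfc)$ lives. It deletes all edges of $\lambda$ with both endpoints in $\calB_{r-D}(x)$, and adds only edges from $\nu$ to vertices just outside $\calB_r(x)$. All such edges lie in $\calN_{\tau+D}(\gamma)\cap(\calB_{r+D}\ssm\calB_{r-D})(x)$, a set of at most $\bdelta(D,\tau)$ edges, since $\gamma$ crosses that shell in two short pieces. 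There are two cases.
\begin{itemize}
\item If for some $r$ no vertex of $\nu$ lies in the shell $\calB_{r+D}(x)\ssm\calB_{r-D}(x)$, then $\bfd\nu_r\subseteq\lambda$. In this case $\lambda+\bfd\nu_r$ is strictly smaller than $\lambda$ (it loses the edge at $\bfc$), contradicting minimality.
\item Otherwise $\nu$ meets every shell of width $2D$ centered at radii $r\in[L/2,L-D]$. Since $\nu$ lies in the thin tube $\calN_\tau(\gamma)$, each shell also contains vertices outside $\nu$ adjacent to $\nu$. Hence $\lambda$ has at least one edge in each of about $L/(4D)$ disjoint shells inside $\calB_{L-D}(x)$. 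Taking $r$ near $L-D$, the cut removes at least $L/\bdelta(D)$ edges and adds at most $\bdelta(D,\tau)$. For $L\ge\bdelta(D,\epsilon)$ this strictly decreases $|\lambda|$, again contradicting minimality.
\end{itemize}

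I expect the main obstacle to be Step 2. Turning \Cref{ball minus geodesic is connected}, which is stated for the graph metric and a bi-infinite geodesic, into the combinatorial statement that $\nu$ is constant off the tube requires care on three points: the Rips-complex adjacency at scale $D$, the extension of the segment $\gamma$ to a bi-infinite geodesic, and checking that paths in $\calB_L(x)\ssm\calN_{\epsilon+D}(\gamma)$ cross no edge of $\lambda$. The bookkeeping of the order of constants ($\epsilon$, then $D$, then $\tau$ and $L$, then $R$) must also be checked against the quantifiers in the statement.
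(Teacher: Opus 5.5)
Your strategy matches the paper's. Contractibility of the ball gives a local primitive $\nu$. \Cref{ball minus geodesic is connected} then traps $\nu$ in a tube $\calN_\tau(\gamma)$. Finally, cutting $\nu$ off at a sphere yields a primitive of the same coboundary with fewer edges. However, the key quantitative step is not justified in your Case 2: the lower bound on how many edges the cut deletes.

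You write that ``since $\nu$ lies in the thin tube, each shell also contains vertices outside $\nu$ adjacent to $\nu$.'' This does not follow. Since $\tau=\bdelta(D,\epsilon)\gg D$, a vertex of $\nu$ in the shell may have its whole $D$-ball inside $\nu$. To reach a non-$\nu$ vertex you would have to move within the shell, and that requires a coarse connectivity property of spheres at scale $D$, which you neither state nor prove. Nothing in your argument excludes the shell meeting the tube in a union of $D$-components of the shell.

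The paper takes this bound from the track rather than from $\nu$:
\begin{itemize}
\item Consecutive connectors of the track through $\bfc$ lie on edges of a common 2-simplex, so their distance to $x$ changes by at most $D$ per step.
\item The track cannot close up inside the ball. A closed, dead-end-free piece is a nonzero cocycle, and no nonzero cocycle can lie inside the minimal cochain $\calB^*_R(\bfc)$: subcochains of minimal cochains are minimal, and the minimal primitive of $0$ is $0$.
\item Hence the track runs from $\bfc$ to the boundary of the ball, giving $|\lambda'|\ge L/D$.
\end{itemize}
Your Case 1 is exactly this ``closing up'' alternative, so the natural dichotomy is on the track, not on $\nu$. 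With this bound the truncation and case split become unnecessary. The paper regards $\nu$ itself as a $0$-cochain on $Y$ and writes $\bfd\nu=\lambda'+\lambda''$, with $\lambda''$ the edges leaving the ball. Since these edges lie in $\calN_{\tau+D}(\gamma)$ near the bounding sphere, $|\lambda''|\le\bdelta(D,\tau)$, which contradicts the minimality of $\lambda'$.

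Two smaller points.
\begin{itemize}
\item Your appeal to \eqref{eq: d d* relation} runs in the wrong direction. It bounds extrinsic distance by intrinsic distance, so it does not show that every edge of $\lambda$ meeting $\calB_L(x)$ carries a connector of $\calB^*_R(\bfc)$; a track can leave the ball and return much later along itself. The paper tacitly makes the same assumption when it calls $\lambda\cap B$ minimal. The natural repair is to work only with the component through $\bfc$ of $\lambda$ inside the ball. That component has at most $\bdelta(D,L)$ connectors, so it lies in $\calB^*_R(\bfc)$ once $R\ge\bdelta(D,L)$.
\item The closest point of $\gamma$ to $x$ is within roughly $\epsilon+D$ of $x$, not $\bdelta$. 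Applying \Cref{ball minus geodesic is connected}, which is stated for balls centred on $\gamma$, therefore needs a recentring, as you anticipated.
\end{itemize}
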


\begin{proof}
Let $\epsilon \ge 0$, and let $D\ge \bdelta$ be such that every ball in $X$ is contractible.
Denote $B = \calB(\bfc,L)$
Assume that $\lambda' := \lambda\cap B\subseteq \calN_\epsilon (\gamma)$ for some geodesic $\gamma$.
We will derive a contradiction for $L\ge \bdelta(D,\epsilon)$ and $R\ge \bdelta(D,L,\epsilon)$.

View $\lambda'$ as a 1-cochain in $C_c^1(B)$. 
If $R\ge \bdelta(D,L)$ then $\lambda'$ is minimal in $Y$ and has no coboundary in $B$.
Therefore, $\lambda'$ is a 1-cocycle in $B$.
Since $B$ is contractible, there exists $\nu\in C^0_c(B)$ such that $\lambda' =\bfd\nu$ in $B$.
Since $B$ is compact,  the complement $\nu' = B^0\ssm \nu\in C^0_c(B)$ also satisfies $\lambda' = \bfd \nu$ in $B$.

By \Cref{ball minus geodesic is connected}, there exists $\tau = \bdelta(\epsilon)$ such that $B\ssm \calN_\tau(\gamma)$ is connected in $B\ssm \calN_\epsilon(\gamma)$.
One of $\nu,\nu'$, say $\nu'$, meets $B\ssm \calN_\tau(\gamma)$.
Since $\bfd \nu'$ is in $\calN_\epsilon(\gamma)$, we deduce that $\nu'$ contains $B^0\ssm \calN_\tau(\gamma)$.
In other words, $\nu$ is contained in  $\calN_\tau(\gamma)$.

Now, consider $\nu\in C_c^0(Y)$.
Its coboundary $\bfd\nu$ consists of two parts, $\lambda'$ (its coboundary in $B$) and $\lambda'':=\bfd\nu - \lambda'$ (the parts of its coboundary in $Y$ which lie in $Y\ssm B$).
Let us estimate the size of $\lambda'$ and $\lambda''$.
The subtrack corresponding to $\lambda'$ and containing $\bfc$ contains a path connecting $\bfc$ to the boundary of the ball $B$, so $|\lambda'| \ge  L/D$.
To estimate the size of $\lambda''$, note that every edge of $\lambda''$ is incident to a vertex in $B\cap \calN_\tau(\gamma)$ and to a vertex in the complement of $B$. 
Therefore,  $|\lambda''|\le \bdelta(D,\tau) = \bdelta(D,\epsilon)$.

Then, $\bfd \lambda'' = \bfd \lambda'$, and if $L\ge \bdelta(D,\epsilon)$ we get $$|\lambda''|  \le \bdelta(D,\epsilon) \le L/D \le |\lambda'|,$$
contradicting the minimality of $\lambda'$.
\end{proof}

\subsection{Barycentrical thickness of coboundaries}

\begin{definition}\label{def: barycenterically thick}
Let $M\ge 0$. 
A point $x$ is an $M$-barycenter of $y_1,y_2,y_3$ if it is at distance $M$ from the three sides of a geodesic triangle spanned by $y_1,y_2,y_3$.

A set $A\subseteq X$ is \emph{$M$-barycentrically thick} if 
every point in $\conv(A)$ is an $M$-barycenter of a geodesic triangle spanned by points in $A$.
\end{definition}

\begin{remark}
    Roughly speaking, the following are equivalent:
    \begin{enumerate}
        \item $A$ is barycentrically thick
        \item The nearest point projection of $A$ to any geodesic is coarsely connected -- that is, it cannot be separated into two sets whose projections to the geodesic are far.
        \item The convex hull $\conv(A)$ is plump away from points of $A$.
    \end{enumerate}
\end{remark}  

We have already proved that local minimizers are taut (\Cref{local minimizers are taut}) and plump away from their coboundary  (\Cref{local minimizers are plump}).
In view of the equivalence above, we must have that the coboundary of local minimizers is barycentrically thick.
We make this precise in the next lemma.

\begin{lemma}\label{coboundary of local minimizers is thick}
    For $D\ge\bdelta$, $R\ge \bdelta(D)$, $M\ge \bdelta(D,R)$ if $\lambda$ is a subtrack of $\calF_R$ then $\bfd\lambda$ is $M$-barycentrically thick.
\end{lemma}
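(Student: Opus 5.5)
We argue by contradiction, taking \Cref{local minimizers are taut} and \Cref{local minimizers are plump} as given. Write $A=\bfd\lambda$ (a finite set of 2-simplices, which we view via their vertices as a subset of $X^0$) and $C=\conv(A)$. Fix $x\in C$. We must find $y_1,y_2,y_3\in A$ such that $x$ is within $M$ of all three sides of a triangle $[y_1,y_2,y_3]$. Since $x\in\conv(A)$, there already are $a,b\in A$ with $x\in[a,b]$. If $d(x,A)\le M/2$, we take $y_1=a$, $y_3=b$ and let $y_2$ be a point of $A$ within $M/2$ of $x$; then $x$ is close to all three sides, so we are done. We may therefore assume $d(x,A)> M/2$. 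Call such an $x$ a \emph{bad} point if it is not an $M$-barycenter of any triangle with vertices in $A$.

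\textbf{Step 1: badness gives a bottleneck.} Let $\gamma$ be a bi-infinite geodesic extending $[a,b]$ near $x$; any geodesic through $x$ with endpoints in $A$ will do. Project $A$ to $\gamma$ via $\pi_\gamma$. Standard thin-triangle arguments give the following: if some $y\in A$ has projection within $M/3$ of $x$, then $x$ is $\bdelta$-close to $[a,y]$ or to $[y,b]$, and so $x$ is an $M$-barycenter of $a,y,b$. Hence if $x$ is bad, $\pi_\gamma(A)$ avoids the ball of radius $M/3$ about $x$. It then splits into $A_-$, projecting before $x$, and $A_+$, projecting after $x$.

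\textbf{Step 2: slenderness near $x$.} Every geodesic between two points of $A_-$ (or of $A_+$) stays, near $x$, at distance at least $M/3-\bdelta$ from $x$. Every geodesic from $A_-$ to $A_+$ passes within $\bdelta$ of $\gamma$ on the ball $\calB(x,M/3-\bdelta)$. Consequently, for any $L\le M/3-\bdelta$,
\[ \conv(A)\cap\calB(x,L)\subseteq\calN_{\bdelta}(\gamma). \]
That is, $C$ is $(L,\bdelta)$-slender at $x$.

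\textbf{Step 3: transfer to $\lambda$.} By \Cref{local minimizers are taut}, $\lambda\subseteq\conv_T(A)$ with $T=\bdelta(D,R)$. The set $\calN_T(C)\cap \calB(x,L)$ then lies in $\calN_{T+\bdelta}(\gamma)$, by Step 2 applied at slightly nearby points, using quasiconvexity (\Cref{facts about qc sets}). Put $\epsilon=T+\bdelta=\bdelta(D,R)$. We also need a connector of $\lambda$ near $x$. Since $\lambda$ is a primitive of $A$ and $a$ and $b$ lie on opposite sides, the track must cross the "wall" near $x$. Concretely, we look for a connector $\bfc\in\lambda$ with $d(\bfc,x)\le \bdelta(D,R)$. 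If no such connector existed, the ball around $x$ would lie entirely on one side of $\lambda$ ($\lambda=\bfd\nu$ locally). We would then modify $\lambda$ by adding $\bfd$ of the far side, contradicting minimality; this is the same pushing argument as in \Cref{minimal is taut}. This gives $\bfc$, with $\bfc\notin\calN_R(A)$ once $M\ge \bdelta(D,R)$, and $\lambda\cap\calB(\bfc,L')\subseteq\calN_\epsilon(\gamma)$ for $L'=L-\bdelta$. Choosing $L\ge\bdelta(D,\epsilon)$ as required by \Cref{local minimizers are plump}, and then $M\ge 3L+\bdelta$, $\lambda$ is both $(L',\epsilon)$-slender and $(L',\epsilon)$-plump at $\bfc$. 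This contradiction shows that no bad point exists.

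The main obstacle is ordering the constants. Plumpness requires $R\ge\bdelta(D,L,\epsilon)$, while $\epsilon$ depends on $T=\bdelta(D,R)$, which creates a potential circularity. To break it, we would use tautness at a local scale: apply \Cref{minimal is taut} to the minimal ball $\calB^*_R(\bfc)$, whose constant $T_1=\bdelta(D)$ is independent of $R$. This gives $\epsilon=\bdelta(D)$, then $L$, then $R$, then $M$. Producing a connector near $x$ is the other delicate point.
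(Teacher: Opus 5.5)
Your architecture matches the paper's, rearranged. Steps 1--2 are the contrapositive of the paper's Case 2: plumpness of $C=\conv(\bfd\lambda)$ at $x$ forces $x$ to be a barycenter. Step 3 is the paper's Claim: slenderness of $C$ at a point far from $\bfd\lambda$ transfers, via tautness, to slenderness of $\lambda$ at a nearby connector, contradicting \Cref{local minimizers are plump}.

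\textbf{The gap is in producing the connector $\bfc$ near $x$.} Your justification fails on two counts.
\begin{enumerate}
\item $\lambda$ is only $R$-locally minimal, so a global modification ``adding $\bfd$ of the far side'' contradicts nothing. There is also no well-defined far side, since $\lambda$ is not a cocycle.
\item Being a primitive of $A$ does not force $\lambda$ to come near $x$. Nothing prevents $A_-$ from being a coboundary. In that case \Cref{lem: primitive in convex hull} gives primitives of $A_-$ and of $A_+=A+A_-$ supported in $\conv_\bdelta(A_\pm)$, which by your Step 2 is far from $x$. Their sum is a primitive of $A$ that avoids $x$.
\end{enumerate}

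The missing ingredient is that $\lambda$ is a \emph{connected} subtrack.
\begin{itemize}
\item Every simplex of $\bfd\lambda$ contains a dead-end of $\lambda$. So there is a chain of connectors of $\lambda$ from a simplex of $A_-$ to one of $A_+$, with consecutive connectors on edges of a common 2-simplex, hence within $D$ of each other.
\item By tautness the chain lies in $\calN_T(C)$.
\item Since $C\cap\calB(x,L)\subseteq\calN_\bdelta(\gamma)$ with $L\gg T+D$, quasiconvexity shows that points of $\calN_T(C)$ whose projection to $\gamma$ is near $x$ are themselves near $x$.
\item The projections of consecutive connectors move by at most $D+\bdelta$, and they go from before $x$ to after $x$.
\end{itemize}
Hence the chain passes within $\bdelta(D)$ of $x$. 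This is how the paper obtains $\bfc\in\lambda\cap\calB(x,100\epsilon)$.

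\textbf{On constants.} The circularity you worry about disappears if you use \Cref{local minimizers are taut} as stated, with $T=\bdelta(D)$ independent of $R$. This is what the paper does. To get this from its proof, take $\beta=\calB^*_{R_0}(\bfc)$ for the fixed $R_0=\bdelta(D)$ in place of $\calB^*_R(\bfc)$; Case 2 only needs $\beta$ to be minimal, which holds since $R\ge R_0$. The ordering is then $\epsilon=\bdelta(D)$, then $L=\bdelta(D)$, then $R\ge\bdelta(D)$, then $M\ge\bdelta(D,R)$.

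Your proposed fix would not work. $\bfd\calB^*_R(\bfc)$ contains the cut simplices at track-distance $R$, which are just further points of $\lambda$. So tautness of that ball relative to its own coboundary says nothing about how close $\lambda$ is to $C$ or to $\gamma$.
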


\begin{proof}
    Let $C = \conv(\bfd \lambda)$. 
    Consider $x\in C$. 
    Our goal is to show that $x$ is an $M$-barycenter of the points in $\bfd\lambda$. 
    
    \begin{claim*}
    For $D\ge \bdelta, \epsilon\ge \bdelta (D), L \ge \bdelta(D,\epsilon)$ and $R\ge \bdelta(D,\epsilon,L)$ either $d(x,\bfd\lambda)\le R$ or $C$ is $(L,\epsilon)$-plump at $x$. 
    \end{claim*}
    \begin{proof}[Proof of Claim:]

    Assume that $d(x,\bfd\lambda)\ge R$ and $C$ is $(L,\epsilon)$-slender at $x$. 
    For $\epsilon \ge \bdelta$ and $L\ge \bdelta( \epsilon)$, the ball $\calB(x,100\epsilon)\cap C$ disconnects $C$ into two components $C_-,C_+$. 
    Each of $C_-,C_+$ must contain points of $\bfd\lambda$. 
    Since $\lambda$ is connected, it therefore must contain some $\bfc\in \lambda \cap \calB(x,100\epsilon)$.

    By \Cref{local minimizers are taut}, there exists $T=\bdelta(D)$ such that $\lambda \subseteq \calN_T(C)$.

    If $C$ is $(L,\epsilon)$-slender then $\lambda$ is $(L-100\epsilon,T+\epsilon)$-slender at $\bfc$.
    For large enough $L$ and $R$ this is impossible by \Cref{local minimizers are plump}.
    \end{proof}

    We divide the rest of the proof of the lemma into the two possible cases:

    \noindent\textbf{Case 1.}   $d(x,\bfd\lambda)\le R$:

    Say $d(x,y)\le R$ for some $y\in \bfd \lambda$. Then for $M\ge R$,  $x$ is an $M$-barycenter of the (very) degenerate geodesic triangle spanned by $y,y,y$.

    \noindent\textbf{Case 2.}  $d(x,\bfd\lambda)\ge R$ and 
     and $C$ is $(L,\epsilon)$-plump at $x$:

    By definition of the convex hull $x\in [x_-,x_+]$ for some $x_-,x_+\in \bfd\lambda$.
    Since $C$ is $(L,\epsilon)$-plump, there exists some point $y\in C\cap \calB(x,L) \ssm \calN_\epsilon([x_-,x_+])$.
    Again, by definition of $C$, $y\in [y_-,y_+]$ for some $y_-,y_+\in \bfd\lambda$.

    Let $m_-\in [x_-,x_+]$ be the barycenter of the triangles $y_-,x_-,x_+$. That is, $m_-$ is the point on $[x_-,x_+]$ at distance $\bdelta$ from both $[x_-,y_-]$ and $[x_+,y_-]$.
    Similarly, define $m_+\in [x_-,x_+]$ to be a $\bdelta$-barycenter of the triangle $y_+,x_-,x_+$.
    
    By considering the points $x,y$ in the $\bdelta$-thin quadrilateral $x_-,x_+,y_-,y_+$, one sees that if $y$ is not in the $\bdelta$-neighborhood of $[x_-,x_+]$ then 
    $$\min\{d(x,m_-),d(x,m_+)\}\le d(x,y)+\bdelta\le L+\bdelta = \bdelta(L).$$ 

    So $x$ is a $\bdelta(L)$-barycenter of the triangle spanned by $x_-,x_+$ and one of $y_-,y_+$.  
\end{proof}



    

    


\subsection{The convex hulls of barycentrically thick sets}

\begin{proposition}\label{isoperimetric ineq for thick sets}
    For $M\ge 0$  if $A$ is $M$-barycentrically thick then $|\conv(A)|\le \bdelta(M) |A|.$
\end{proposition}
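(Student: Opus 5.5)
The plan is to charge every point of $\conv(A)$ to a triangle spanned by points of $A$ and count the resulting triangles. Two facts make this work. The number of charged points per triangle should be bounded. The number of triangles needed should be linear in $|A|$.

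\textbf{Step 1 (local charging).} By $M$-barycentric thickness, each $x\in\conv(A)$ is an $M$-barycenter of some geodesic triangle with vertices $a_1,a_2,a_3\in A$. In a $\bdelta$-hyperbolic space the set of $M$-barycenters of a fixed triangle has diameter at most $\bdelta(M)$. Since $X$ is locally finite with bounded valence, such a set contains at most $\bdelta(M)$ vertices. So each triangle is charged by at most $\bdelta(M)$ points. Crude counting over all triples would only give $|\conv(A)|\le \bdelta(M)|A|^3$, so the choice of triangles must be more economical.

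\textbf{Step 2 (reduction to a tree).} I would use the tree approximation of $\conv(A)$. Approximate the finite set $A$, together with a basepoint, by a finite metric tree $\mathcal T$ whose leaves correspond to points of $A$. The catch is that the standard approximation error grows like $\bdelta\log|A|$, which is not uniform. To avoid this, I would work locally. Take $x\in\conv(A)$, write $x$ as an $M$-barycenter of $a_1,a_2,a_3$, and consider the Gromov products at $x$. The three directions from $x$ toward $a_1,a_2,a_3$ are $\bdelta(M)$-separated. Partition $A$ according to which "branch" at $x$ it lies in, using the nearest point projection to a ball $\calB_{K}(x)$ with $K=\bdelta(M)$. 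Then define a graph $\mathcal G$ whose vertices are the points of a maximal $K$-separated net $\mathcal N\subseteq\conv(A)$, with edges joining points at distance $\le 3K$.

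\textbf{Step 3 (counting branch points).} The goal is to show that $\mathcal N$ behaves like the set of branch vertices of a tree with leaves in $A$. In such a tree, the number of vertices of degree $\ge 3$ is less than the number of leaves. Concretely, I would assign to each net point $x$ three points of $A$ lying in three distinct branches at $x$. I would then prove a coarse Helly/disjointness property: if $x,x'$ are net points with $d(x,x')$ large, then the branch structures at $x$ and $x'$ are nested, as in a tree. The counting then follows by induction on $|A|$. Remove a point $a\in A$ that is "extremal", meaning its branch at some net point contains no other point of $A$. Only $\bdelta(M)$ net points can lose their barycentric property under this removal, namely those within $\bdelta(M)$ of the point where $[a,\conv(A\setminus\{a\})]$ attaches. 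Hence $|\mathcal N(A)|\le|\mathcal N(A\setminus\{a\})|+\bdelta(M)$, and since $|\conv(A)|\le \bdelta(K)|\mathcal N|$, the linear bound follows.

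\textbf{Main obstacle.} The hard part is Step 3's induction. Removing $a$ may destroy barycentricity of a point $x$ that is far from $a$, namely when $a$ was the only witness in one branch at $x$. I would handle this by showing that such an $x$ lies on $[a,\pi_{\conv(A\setminus a)}(a)]$ near its endpoint, using Lemma~\ref{facts about qc sets}(5). I would also need to choose $a$ so that this segment is short relative to $M$. Finding such an $a$ is exactly where thickness of the whole set $A$, not just of $A\setminus\{a\}$, must be used.
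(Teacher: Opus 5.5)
Your proof has a genuine gap in Step~3, and it is not the one you flag.

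\textbf{The obstacle you name is harmless.} For every $a\in A$, thickness of $A$ alone gives $d(a,\conv(A\setminus\{a\}))\le 2M+1$. Each vertex of a geodesic $[a,b]$, $b\ne a$, is within $M$ of every side of some triangle with vertices in $A$. Such a triangle either has a side joining two points of $A\setminus\{a\}$, which lies in $\conv(A\setminus\{a\})$, or has the degenerate side $\{a\}$. So every vertex of $[a,b]$ is $M$-close to $a$ or to $\conv(A\setminus\{a\})$.

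\textbf{The real problem is that the induction does not close.}
\begin{itemize}
\item The hypothesis concerns $M$-barycentrically thick sets, but $A\setminus\{a\}$ is not $M$-thick. Besides points near the attaching point, consider a point that was an $M$-barycenter only of triangles containing $a$. The natural replacement argument swaps $a$ for an endpoint of a geodesic of $\conv(A\setminus\{a\})$ through the attaching point. It only shows that the point is an $(M+\bdelta)$-barycenter of a triangle in $A\setminus\{a\}$. Nothing prevents such marginal points from occurring near each of the possibly $\sim|A|$ medians $m(a,b,c)$, far from $a$. So either the constant grows by $\bdelta$ at each of the $|A|$ removals, or the number of points lost per step is not bounded by $\bdelta(M)$. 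Either way uniformity is lost.
\item The inequality $|\mathcal N(A)|\le|\mathcal N(A\setminus\{a\})|+\bdelta(M)$ compares independently chosen maximal nets of two different sets. Such nets are comparable only up to a multiplicative constant. Also, $\conv(A)$ is only contained in a $\bdelta$-neighbourhood of $\conv(A\setminus\{a\})$ plus a short segment, not in $\conv(A\setminus\{a\})$ itself. Iterating therefore gives an exponential bound, not a linear one.
\item The coarse Helly/nesting property of branch structures is asserted without proof. It is exactly the tree structure you set out to avoid.
\end{itemize}

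\textbf{The missing idea} is that you do not need a metric tree approximation (with its $\log|A|$ error), only a combinatorial tree built at a fixed large scale. The paper proceeds as follows.
\begin{itemize}
\item It fixes $a_0\in A$, intersects $\conv(A)$ with the spheres of radius $nR$ about $a_0$, and takes maximal $r$-separated nets in these intersections.
\item It lets $\pi$ send a point to a net point on the next inner sphere near its radial projection $p$ towards $a_0$.
\item The graph on $A\sqcup\bigcup_n\pi^n(A)$ with edges $x\mapsto\pi(x)$ is automatically a tree: each vertex has a unique parent and levels strictly decrease.
\item The only geometry needed is local stability of $\pi$ against $p$, valid for $R\gg r\gg M$ independently of $|A|$. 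Namely, $d(\pi^j(x),p^j(x))\le r'$, and $\pi^i(a)=x'$ whenever $d(p^i(a),x')\le r/2$.
\item Thickness, after replacing one vertex of a witnessing triangle by $a_0$ via a thin quadrilateral, shows that every vertex other than $a_0$ and the points of $A$ has at least two children besides its parent.
\item Every point of $\conv(A)$ lies within $\bdelta(R)$ of a vertex of the tree.
\item The Euler characteristic count $|V_{\ge 3}(T)|\le|V_1(T)|\le|A|$ finishes the proof.
\end{itemize}
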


We begin by proving a variant of this proposition for trees: If $T$ is a finite tree, and $V_1(T)$ is its set of leaves, then $T = \conv(V_1(T))$. Let $V_{\ge 3}(T)$ be the set of all vertices $v$ of degree $\deg(v)\ge 3$.
Then, the set $V_1(T) \cup V_{\ge 3}(T)$ is exactly the set of $0$-barycenters of points in $V_1(T)$. 
The following lemma proves the proposition in this case.

  \begin{lemma}\label{isoperimetric inequality for a tree}
        If $T$ is a finite tree,  then $|V_{\ge 3}(T)| \le  |V_1(T)|$.
    \end{lemma}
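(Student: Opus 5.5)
The plan is to prove the inequality by a handshake (degree-counting) argument on the finite tree $T$. We may assume $T$ has at least two vertices. If $T$ is a single vertex, it has degree $0$, so $V_{\ge 3}(T)=\emptyset$ and the inequality is trivial.

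For a tree with $n\ge 2$ vertices there are exactly $n-1$ edges. The handshake lemma then gives
\[
\sum_{v\in T^0}\deg(v) = 2(n-1).
\]
Every vertex has degree at least $1$, so we can partition $T^0$ into three sets: $V_1(T)$, the set $V_2(T)$ of degree-$2$ vertices, and $V_{\ge 3}(T)$. Subtracting $2$ from each degree and summing, we get
\[
\sum_{v\in T^0}(\deg(v)-2) = 2(n-1)-2n = -2.
\]
In this sum, each leaf contributes $-1$, each degree-$2$ vertex contributes $0$, and each $v\in V_{\ge 3}(T)$ contributes $\deg(v)-2\ge 1$. This yields
\[
|V_{\ge 3}(T)| \le \sum_{v\in V_{\ge 3}(T)}(\deg(v)-2) = |V_1(T)|-2 < |V_1(T)|,
\]
which is the claim, in fact with room to spare.

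There is no real obstacle here. The only points to check are that the edge count $n-1$ is valid for finite trees, and that the degenerate case (a single vertex, with no leaves of degree $1$) is handled separately as above.
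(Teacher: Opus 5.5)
Your proof is correct and is essentially the paper's argument. The paper writes $1=\chi(T)=\sum_v\bigl(1-\tfrac12\deg(v)\bigr)$ and bounds each vertex's contribution according to its degree; that is exactly your identity $\sum_v(\deg(v)-2)=-2$ multiplied by $-\tfrac12$, and both arguments give the sharper bound $|V_{\ge 3}(T)|\le |V_1(T)|-2$. Your separate treatment of the one-vertex tree is a small improvement, since the paper's case bound $\chi(v)\le\tfrac12$ does not cover a vertex of degree $0$.
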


    In particular, if $V_1(T)$ is $0$-barycentrically thick (in $T$), then $V(T) = V_1(T) \cup V_{\ge 3}(T)$, and so $|V(T)|\le 2|V_1(T)|$.

    \begin{proof}
        The Euler characteristic of the tree is $\chi(T)=1\ge 0$.
        Define \[\chi(v) = 1-\tfrac12 \deg(v) \le \begin{cases} \tfrac12 &\deg(v)=1 \\ 0 &\deg(v)=2 \\ -\tfrac12 &\deg(v)\ge 3\end{cases}\]
        Then,
        \[
        0\le 1 = \chi(T) = \sum_{v\in V(T)} \chi(v)  \le \tfrac12 |V_1(T)| - \tfrac12 |V_{\ge 3}(T)|
        \]
        Thus  $|V_{\ge 3}(T)| \le |V_1(T)|.$
    \end{proof}
    
The way we will prove \Cref{isoperimetric ineq for thick sets} is by creating an approximating tree:
    
    \begin{lemma}\label{approximating tree}
        Let $A$ be an $M$-barycentrically thick set. Then, there exists a finite tree $T$ and $a_0\in V(T)$ such that:
    \begin{enumerate}[label=(T\arabic*)]
        \item \label{T has no deg 2} All vertices $V(T)\setminus \{a_0\}$ do not have degree 2 -- that is, $V(T) \setminus \{a_0\} \subseteq  V_1(T)\cup V_{\ge 3}(T)$,
        \item \label{leaves of T}  $|V_1(T)|\le |A|$, and
        \item \label{T covers C}  $|\conv(A)| \le \bdelta(M)|V_{\ge 3}(T)|$.
    \end{enumerate}
    \end{lemma}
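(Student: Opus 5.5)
We will build $T$ from the standard finite-tree approximation of a finite subset of a hyperbolic space, and then prune it. Write $A=\{a_0,a_1,\dots,a_n\}$. By the tree approximation lemma for hyperbolic spaces (Gromov; see \cite[Ch.~2]{GhH} or \cite{coornaertdelzantpap}), there are a finite metric tree $T_0$ spanned by points $\bar a_0,\dots,\bar a_n$ and a map $f:T_0\to X$ with $f(\bar a_i)=a_i$, such that $f$ is a $(1,\bdelta(n))$-quasi-isometric embedding onto $\conv(A)$ up to bounded Hausdorff distance. The dependence of the error on $n$ is unacceptable, so we will not use this lemma globally. Instead we construct $T$ by the inductive insertion procedure, adding the points of $A$ one at a time. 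At stage $k$ we attach $a_k$ to the current tree at the point where the geodesic from $a_k$ branches off $\conv(\{a_0,\dots,a_{k-1}\})$, that is, at a nearest-point projection. Only the combinatorics of this tree will be used, not its metric.

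\textbf{Step 1: combinatorial tree.} The vertex set of $T$ will be $a_0$, the leaves $a_i$, and the attachment (branch) points. After suppressing degree-$2$ vertices other than $a_0$, property \ref{T has no deg 2} holds by construction. Since every leaf other than possibly $a_0$ is some $a_i$, property \ref{leaves of T}, $|V_1(T)|\le|A|$, follows. We may also discard duplicate or nearby points to keep the leaves distinct.

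\textbf{Step 2: covering.} We must prove \ref{T covers C}. The plan is to assign each point $x\in\conv(A)$ to a branch vertex of $T$ at distance at most $\bdelta(M)$ from $x$; since balls of radius $\bdelta(M)$ contain at most $\bdelta(M)$ points, this gives $|\conv(A)|\le\bdelta(M)|V_{\ge3}(T)|$. (Points near leaves may also be assigned to the nearest branch vertex: each leaf is adjacent to one, and $|V_1|\le |V_{\ge3}|+2$, so this costs nothing.) By $M$-barycentric thickness, $x$ is within $M$ of all three sides of a triangle $a_i a_j a_l$. In the combinatorial tree, the three points $a_i,a_j,a_l$ span a tripod, whose centre $v$ is a branch vertex (degree $\ge3$) of $T$. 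We then need $f(v)$, the branch point in $X$, to lie within $\bdelta(M)$ of $x$. For this, it is enough to show that $v$ is coarsely the barycentre of the triangle $a_ia_ja_l$.

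\textbf{Main obstacle.} Errors accumulate in the insertion procedure, so the realized tripod centres may drift from the true barycentres. To control this, I would define the branch points canonically instead of inductively. Take the vertices of $T$ to be a maximal $10\bdelta$-separated set $P$ of \emph{coarse barycentres}: points that are $\bdelta$-barycentres of some triple from $A$. Build $T$ as follows. Root at $a_0$ and take the union of the geodesics $[a_0,a_i]$; this set is $\bdelta$-close to a tree, since geodesics from a common basepoint fellow-travel and then diverge. Its coarse branch points are exactly the points near which $[a_0,a_i]$ and $[a_0,a_j]$ separate, i.e. near Gromov products, and these are $\bdelta$-barycentres of $a_0,a_i,a_j$. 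So the tree obtained by gluing along fellow-travelling segments and contracting degree-$2$ chains has its branch vertices within $\bdelta$ of genuine barycentres, with uniform error; this works because the error in the rooted construction is only $\bdelta$. Conversely, each $M$-barycentre $x$ of $a_ia_ja_l$ lies within $M+\bdelta$ of $[a_0,a_i]\cup[a_0,a_j]$, near a point where two of these rooted geodesics diverge, hence within $\bdelta(M)$ of a branch vertex. Establishing this final estimate, which uses thin triangles for $a_0,a_i,a_j,a_l$, is the step I expect to need the most care.
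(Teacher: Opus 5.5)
There is a genuine gap, and it is at the step you flagged as the main obstacle: you never construct a combinatorial tree whose branching is uniformly tied to the geometry.

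\textbf{The rooted construction does not remove the error.} Your claim that the union of the geodesics $[a_0,a_i]$ is $\bdelta$-close to a tree, with error independent of $|A|$, is false. Take $a_0$ together with $n$ points evenly spaced on a circle of radius $\gg\log n$ about $a_0$ in $\bbH^2$. Then $e^{-\gen{a_i,a_j}_{a_0}}$ is comparable to the angle $\theta_{ij}$. In a rooted tree, $e^{-\gen{\cdot,\cdot}}$ is an ultrametric. So an additive error $c$ would give an ultrametric $u$ with $\theta\le u\le K\theta$, where $K=e^{O(c)}$. Then $\theta_{0,n/2}\le u(0,n/2)\le\max_i u(i,i+1)\le 2\pi K/n$, which forces $c\ge\log n-O(1)$. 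So rooting at $a_0$ leaves the same $\log|A|$ loss as Gromov's lemma.

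\textbf{The gluing is not well defined.} ``Gluing along fellow-travelling segments'' is not a well-defined operation, because fellow-travelling is not transitive. If you take the transitive closure, chains of neighbouring geodesics, as in the circle, get identified. The tree then collapses to something like a star with a single branch vertex, and (T3) fails. Your set $P$ of separated coarse barycentres is never given edges. The ``Conversely'' step asserts that the tree's branch point for $a_i,a_j$ lies within $\bdelta(M)$ of the barycentre $x$, and that is exactly the uniform faithfulness no tree approximation can provide.

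\textbf{What the paper does instead.} The tree does not need to be metrically faithful; it only needs a parent map whose iterates do not drift.
\begin{itemize}
\item Fix $R\gg r\gg M$. Take the spheres $S_n=\calS(a_0,nR)\cap\conv(A)$ and maximal $r$-separated nets $V_n\subseteq S_n$.
\item Let $p$ send a point to where $[a_0,\cdot]$ crosses the previous sphere, and let $\pi$ be $p$ followed by snapping to that sphere's net.
\item $T$ is the parent-pointer tree on $A\setminus\{a_0\}$ and $V'=\bigcup_{n\ge2}\pi^n(A)$, so it is a tree by construction.
\item Because $R\gg r'$, the map $p$ sends $r'$-close points of a sphere to $\bdelta$-close points. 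This resets the snapping error at every step, so $d(\pi^j(x),p^j(x))\le r'$ for all $j$.
\item Moreover, $\pi^i(a)=x'$ \emph{exactly} whenever $p^i(a)$ is within $r/2$ of a net point $x'$.
\end{itemize}
Thickness is then used in the opposite direction from yours. Every $x\in V'\setminus\{a_0\}$ lies in $\conv(A)$, so by your thin-quadrilateral reduction it is an $(M+\bdelta)$-barycentre of some $a_0,a_1,a_2$. Hence $x=\pi^{i_1}(a_1)=\pi^{i_2}(a_2)$, with predecessors more than $R$ apart, so $\deg x\ge 3$. Property (T3) then reduces to the easy inclusion $\conv(A)\subseteq\calN_{\bdelta(R)}(V')$.
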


\begin{proof}
    Let $A$ be an $M$-barycentrically thick set, and let $C = \conv(A)$.
    Fix some $a_0\in A$.
    
    For $R> 0$, let $n:C\ssm\{a_0\} \to \bbZ_{\ge 0}$ be the map defined by $n(x) = \lceil\tfrac{d(x,a_0)}{R}\rceil$ for $x\in C\ssm \{a_0\}$. That is, $n(x)$ is the maximal $n\ge 0$ such that $nR < d(x,a_0)$.

    Consider the set $S_n = \calS(a_0,n\cdot R)\cap C$ for all $n\ge 0$, and their union $S = \bigcup_n S_n$. Let $p:C\ssm\{a_0\}\to S$ be the map that sends $x$ to the unique point of intersection of a geodesic $[a_0,x]$ with $S_{n(x)}$,  i.e.\ $p(x)$ is the nearest point projection of $x$ to a strictly smaller sphere in the collection $S_n$.
    Extend $p$ to a map $p:C\to S$ by setting $p(a_0)=a_0$.

    For $r> 0$, choose a maximal set $V_n$ in $S_n$ such for all $x, y\in V_n$ if $x\ne y$ then $\calB(x,r)\cap \calB(y,r)=\emptyset$.
    Since $V_n$ is maximal, every point in $S_n$ is at distance at most $2r$ from some point in $V_n$.
    Let $V = \bigcup_n V_n$ and let $\pi:C  \to V$ be the map that sends $x\in C\ssm \{a_0\}$ to a point in $V_{n(x)}$ at distance $2r$ away from $p(x)$.

    The following claim shows that iterative applications of $p$ and $\pi$ are uniformly close.

    \begin{claim}\label{claim: distance inequality p pi}
        For $r'\ge r\ge \bdelta$, and $R\ge \bdelta(r')$, if $x,y\in S_n$ and $d(x,y)\le r'$ then 
        \begin{align}
            \label{eq: d(p,p)}d(p(x),p(y)) &\le \bdelta,\\
            \label{eq: d(p,pi)}d(p(x),\pi(y))&\le 2r+\bdelta\\ 
            \label{eq: d(pi,pi)}d(\pi(x),\pi(y))&\le 4r+\bdelta, \text{ and}\\
            \label{eq: d(p j,pi j)} d(\pi^j(x),p^j(x))&\le r',\;  \forall j\in \bbN,
        \end{align}
        Moreover, if $a\in A$, $x\in V_n$,
        \begin{equation}\label{eq: close p implies equal pi}
        \text{ if }d(p^i(a),x')\le r/2 \text{ then } \pi^i(a)=x'
        \end{equation}
    \end{claim}
    \begin{proof} 
        Indeed, since triangles are thin, and $R\gg r'$ we have that if $d(x,y)\le r'$ then $d(p(x),p(y))\le \bdelta$. If $r' \ge \bdelta(r)$ we also have $$d(p(x),\pi(y))\le 2r+\bdelta \le r'$$
        and also 
        \begin{equation*}
            d(\pi(x),\pi(y))\le 4r+\bdelta\le r'. 
        \end{equation*}
        Iterating these gives \eqref{eq: d(p j,pi j)}.
        
        Finally, to prove \eqref{eq: close p implies equal pi}, let $a\in A$ and assume that $d(p^i(a),x')\le r/2$ for $x'\in V_n$. Then, by \eqref{eq: d(p j,pi j)} $d(p^{i-1}(a),\pi ^{i-1}(a))\le r'$. By \eqref{eq: d(p,p)}, $d(p^i(a),p(\pi^{i-1}(a)))\le \bdelta$. Therefore, for $r\ge \bdelta$, $d(x',p(\pi^{i-1}(a)))\le r/2+\bdelta\le r$. 
        It follows by the definition of $\pi$ that $\pi^i(a)=x'$.
    \end{proof}

    Let $V' = \bigcup_{n\ge 2} \pi^n(A)$, and set $A' := A\ssm\{a_0\}$.
    Consider the tree $T$ whose set of vertices is the abstract disjoint union $A'\sqcup V'$, and edges connect $x \in A'$ to $\pi^2(a)\in V'$ and the vertex $x\in V'$ to $\pi(x)\in V'$.
    
    Clearly, $T$ is a tree.

    We claim that for $r\ge \bdelta(M)$ and $R\ge \bdelta(r)$ the tree $T$ satisfies \ref{T has no deg 2}-\ref{T covers C}.

\bigskip

    \noindent\textbf{\ref{T has no deg 2} and \ref{leaves of T}:}
    Clearly every vertex in $A'$ is a leaf. 
    It remains to show that every vertex of $V' \ssm \{a_0\}$ has degree 3.
    Let $x\in V'\setminus \{a_0\}$, then since $A$ is $M$-barycentrically thick, $x$ is an $M$-barycenter of some $a_1,a_2,a_3\in A$. 
    By considering the $\bdelta$-thin quadrilateral $a_0,a_1,a_2,a_3$, we see that $x$ is an $(M+\bdelta)$-barycenter of $a_0$ and two of the three points $a_1,a_2,a_3$. Without loss of generality, $x$ is an $(M+\bdelta)$-barycenter of $a_0,a_1,a_2$.
    It follows that $d(x,p^{i_1}(a_1))\le M+\bdelta$ and $d(x,p^{i_2}(a_2))\le M+\bdelta$ for some $i_1,i_2\in \bbN$.
    By \eqref{eq: close p implies equal pi}, for appropriate $R\gg r\gg M+\bdelta$ we have $x  = \pi^{i_1} (a_1)=\pi^{i_2}(a_2)$.
    By definition of $V'$ we may assume that at least one of $\max\{i_1,i_2\}\ge 2$.
    For $j=1,2$, let $$y_j = \begin{cases} \pi^{i_j-1}(a_j) & i_j\ge 3\\ a_j &i_j =1, 2. \end{cases}$$ 
    Note that $d(y_1,y_2)>R$, so that $y_1\ne y_2$. 
    This shows that $x$ has degree three, since $y_1,y_2,\pi(x)$ are three distinct neighbors of $x$.

    \bigskip

    \noindent \textbf{\ref{T covers C}:} 
    Let $x\in C$, then $x\in [x_-,x_+]$ for some $x_-,x_+\in A$. Since triangles are thin, $x$ is at distance $\delta$ from a geodesic $[a,a_0]$ where $a\in \{x_-,x_+\}\subseteq A$.
Therefore, $x$ is at distance at most $2R+\delta$ away from some point $p^i(a)$ for $i\ge 2$. By \eqref{eq: d(p,pi)} $d(p^i(a),\pi^i(a)) \le r'$. 
Therefore, $C \subseteq \calN_{R+\delta+r'}(V')$.
    It follows that $|C|\le \bdelta(R)|V'| $.
\end{proof}

\begin{proof}[Proof of \Cref{isoperimetric ineq for thick sets}]
    Let $A$ be an $M$-barycentrically thick set, then by \Cref{approximating tree,isoperimetric inequality for a tree}, we have the desired inequality
    \begin{equation*}|\conv(A)|\le \bdelta(M) |V'| \le \bdelta(M)|V_{\ge 3}(T)|\le \bdelta(M)|V_1(T)| \le  \bdelta(M) |A|.\qedhere\end{equation*}
\end{proof}

\subsection*{Summary of \Cref{sec: thick}}

Combining the results of this section we obtain:

\begin{proposition}\label{the inequality for locally minimizing}
    For $D\ge \bdelta$ and $R\ge \bdelta(D)$ any subtrack $\lambda$ of $\calF_R$ satisfies 
    \begin{equation}\label{eq: inequality for locally minimizing}|\lambda| \le\bdelta(D,R)|\bfd \lambda|.
    \end{equation}
\end{proposition}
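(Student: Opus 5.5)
The plan is to chain the three main results of this section: tautness, barycentric thickness of the coboundary, and the isoperimetric inequality for convex hulls of thick sets. Fix $D\ge\bdelta$ large enough that \Cref{local minimizers are taut}, \Cref{coboundary of local minimizers is thick} and the contractibility of balls used in \Cref{local minimizers are plump} all apply. Then take $R\ge\bdelta(D)$ large enough for \Cref{local minimizers are taut} and \Cref{coboundary of local minimizers is thick}. Let $\lambda$ be a subtrack of $\calF_R$ (working with a lift to $Y$ if needed), and identify $\bfd\lambda$ with a finite set of points of $X^0$ by choosing a vertex of each 2-simplex. This identification changes $|\bfd\lambda|$ by at most a factor $\bdelta(D)$, since each vertex lies in at most $\bdelta(D)$ simplices.

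The argument then has three steps.

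\textbf{Step 1 (tautness).} By \Cref{local minimizers are taut} there is $T=\bdelta(D,R)$ such that every edge of $\lambda$ lies in $\conv_T(\bfd\lambda)=\calN_T(\conv(\bfd\lambda))$. Each vertex of $X^0$ is incident to at most $\bdelta(D)$ edges of $Y$, and $|\calN_T(S)|\le |\calB_T|\cdot|S|$ for any set $S$. Hence
\[
|\lambda|\le \bdelta(D)\,|\calN_T(\conv(\bfd\lambda))|\le \bdelta(D,R)\,|\conv(\bfd\lambda)|.
\]

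\textbf{Step 2 (thickness).} By \Cref{coboundary of local minimizers is thick}, the set $\bfd\lambda$ is $M$-barycentrically thick for some $M=\bdelta(D,R)$.

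\textbf{Step 3 (isoperimetry).} \Cref{isoperimetric ineq for thick sets} gives $|\conv(\bfd\lambda)|\le\bdelta(M)\,|\bfd\lambda|=\bdelta(D,R)\,|\bfd\lambda|$. Combining this with Step 1 yields \eqref{eq: inequality for locally minimizing}.

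The degenerate case $\bfd\lambda=\emptyset$ needs separate treatment, and I expect it to be the main subtlety. There the claimed inequality says $|\lambda|=0$, which I would get from minimality. A subtrack of $\calF_R$ with empty coboundary is a compactly supported cocycle, so $\lambda=\bfd\nu$ for some $\nu\in C^0_c(Y)$, because $H^1_c(Y)=0$. The zero cochain is then a primitive of $\bfd\lambda=0$, so a minimal $\lambda$ must vanish. The difficulty is that minimality is only known locally, on balls $\calB^*_R$. To pass to the whole track I would use local tautness: the proof of \Cref{local minimizers are taut} shows that a point maximizing the distance to $C$ is within $T$ of $C$, and with $C=\emptyset$ this is a contradiction unless $\lambda$ is empty. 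I would check this carefully.

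I would also check that the choice of vertex representatives in Step 2 is compatible with how $\conv(\bfd\lambda)$ is used in \Cref{local minimizers are taut}; this only costs constants $\bdelta(D)$.
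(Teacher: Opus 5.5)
This is correct and is exactly the paper's proof. It chains tautness (\Cref{local minimizers are taut}), barycentric thickness of $\bfd\lambda$ (\Cref{coboundary of local minimizers is thick}) and \Cref{isoperimetric ineq for thick sets}, and your extra bookkeeping (edges per vertex, one vertex per $2$-simplex) only changes the constants. The case $\bfd\lambda=\emptyset$ needs no separate treatment, since your Step 1 already gives $\lambda\subseteq\calN_T(\conv(\emptyset))=\emptyset$, just as in the paper. Your fallback of rerunning the tautness argument with $C=\emptyset$ would not work as written (there are no nearest points in $C$); a direct check would run that argument with $C=\{o\}$ for an arbitrary basepoint $o$.
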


\begin{proof}
    By \Cref{local minimizers are taut}, for $D\ge \bdelta$ and $R\ge \bdelta(D)$, the track $\lambda$ is $T=\bdelta(D)$-taut. By \Cref{coboundary of local minimizers is thick}, for $D\ge \bdelta$ and $R\ge \bdelta(D)$, the coboundary $\bfd\lambda$ is $M=\bdelta(D,R)$-barycentrically thick. 
    By \Cref{isoperimetric ineq for thick sets}, we get 
    \begin{equation*}|\lambda| \le |\calN_T(\conv(\bfd\lambda))|\le \bdelta(T)|\conv(\bfd\lambda)|\le \bdelta(T)\bdelta(M)|\bfd\lambda| = \bdelta(D,R)|\bfd\lambda|.\qedhere\end{equation*}
\end{proof}

\subsection*{Positivity of the Cheeger constant} \label{sec: theorem b}
We will now restate and prove \cref{thm: positive 1-cheeger constant}.

\cheegertheorem*

\begin{proof}[Proof of \Cref{thm: positive 1-cheeger constant}]
Let $G$ act freely and cocompactly on a simply connected simplicial complex $Z$. 
Let $Y$ be the 2-skeleton of the Rips complex $\Rips_D(Z)$ for some $D$ to be determined later.
We start by first showing that it suffices to prove the theorem for $Y$:

\begin{claim}
    For $D\ge 1$,  $h^1(Y)>0 \implies  h^1(Z)>0$.
\end{claim}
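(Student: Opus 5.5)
We need to prove: if $h^1(Y)>0$ then $h^1(Z)>0$, where $Y$ is the 2-skeleton of $\Rips_D(Z)$ (with vertex set $Z^0$) and $Z$ is simply connected with a free cocompact $G$-action. The plan is to transfer minimal primitives and coboundaries back and forth between $Z$ and $Y$ via chain maps that are bounded in the $\ell^1$ sense. Bounded means each simplex is sent to a uniformly bounded number of simplices, which is guaranteed by cocompactness and local finiteness.

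\textbf{Step 1: chain maps.} Since $D\ge 1$, the 1-skeleton of $Z$ sits inside $Y$. After subdividing if needed, the 2-skeleton $Z^{(2)}$ is a subcomplex of $Y$, or at least maps simplicially into it: vertices go to themselves, and each 2-simplex of $Z$ has diameter $\le 2\le$ a $D$-simplex once $D\ge 2$. For $D=1$ we use a bounded cellular map instead. Call this inclusion $i:Z^{(2)}\to Y$.

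Conversely, we build a $G$-equivariant chain map $j:C_*(Y)\to C_*(Z)$ that is the identity on vertices.
\begin{itemize}
\item An edge $[u,v]$ of $Y$ goes to a chosen edge path in $Z^1$ of length $\le \bdelta(D)$; choose these equivariantly on orbit representatives.
\item A 2-simplex of $Y$ goes to a 2-chain of $Z$ filling the resulting loop, which exists because $Z$ is simply connected. Its size is bounded by a constant, since there are finitely many orbits.
\end{itemize}
Then $j\circ i$ is chain homotopic to the identity on $C_{\le1}(Z)$ through a bounded equivariant homotopy $K$ with $K:C_0\to C_1$ and $K:C_1\to C_2$. In fact we may take $j$ to be the identity on the edges of $Z$, so $j\circ i=\mathrm{id}$ in degrees $\le 1$, and only degree 2 needs a homotopy. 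All of these maps have bounded norm $\bdelta(D)$, and so do their duals on compactly supported cochains.

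\textbf{Step 2: compare.} Let $0\ne\alpha=\bfd\lambda\in B^2_c(Z)$ and set $\alpha_Y=j^*\alpha=\bfd(j^*\lambda)\in B^2_c(Y)$. Then $|\alpha_Y|\le\bdelta|\alpha|$.

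The main subtlety is the case $\alpha_Y=0$. Here we argue that $i^*j^*\lambda$ is a cocycle differing from $\lambda$ by a bounded correction. Because $j\circ i$ is the identity on edges, $i^*j^*\lambda=\lambda$, so $\alpha=i^*\alpha_Y$ up to the degree-2 homotopy term. Precisely, $\alpha+i^*j^*\alpha=K^*\bfd\alpha+\bfd K^*\alpha=\bfd K^*\alpha$. Hence $\lambda':=K^*\alpha+i^*\beta$ is a primitive of $\alpha$, where $\beta$ is any primitive of $\alpha_Y$ in $Y$.

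Now apply $h^1(Y)>0$: choose $\beta$ with $|\beta|\le h^1(Y)^{-1}|\alpha_Y|$. This gives
$$\MP(\alpha)\le|\lambda'|\le \bdelta|\alpha|+\bdelta|\beta|\le \bdelta(D)\,(1+h^1(Y)^{-1})\,|\alpha|.$$
In particular the case $\alpha_Y=0$ is harmless, since then $\beta=0$ works.

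\textbf{Main obstacle.} The hard part is making $i$ a genuine bounded chain map in degree 2 and producing an equivariant bounded $K$. We construct both orbit-by-orbit. Filling a 2-cycle of $Z$ in degree 3 is never needed, because only the identity $\alpha+i^*j^*\alpha=\bfd K^*\alpha$ in degree 2 is used. Apart from that, the argument is a bookkeeping check.
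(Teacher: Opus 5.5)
Your argument is correct and is essentially the paper's proof. Your $j$ is the paper's equivariant retraction $r\colon Y\to Z$, built orbit-by-orbit by filling loops using simple connectivity of $Z$, and pulling $\alpha$ back to $Y$, choosing a short primitive there, and restricting via $i^*$ is exactly what the paper does. The homotopy $K$ is superfluous, since $i^*j^*\lambda=\lambda$ already gives $i^*\alpha_Y=\bfd(i^*j^*\lambda)=\alpha$ exactly. Also, 2-simplices of $Z$ have diameter $1$, not $2$, so $Z^{(2)}\subseteq Y$ for every $D\ge 1$ and no separate $D=1$ case is needed; $j$ can also be taken to be the identity on them, which is the paper's $r\circ i=\id_Z$.
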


\begin{proof}[Proof of Claim.]
There exists a $G$-equivariant embedding $i:Z \inj Y$ and since $Z$ is simply connected there is a $G$-equivariant retraction $r:Y\to Z$ such that $r\circ i=\id_{Z}$.
The preimage of each cell of $Z$ under $r$ has at most $\bdelta(D)$ cells.

Let $\alpha_1\in B_c^2(Z)$, then $\alpha_2= r^\sharp(\alpha_1)\in B_c^2(Y)$ and $|\alpha_2|\le \bdelta(D) |\alpha_1|$. 
If $h^1(Y)>0$ then there exists $\lambda_2\in C^1_c(Y)$ such that $\alpha_2=\bfd \lambda_2$ and $|\lambda_2|\le \bdelta(D) |\alpha_2|\le \bdelta(D) |\alpha_1|$.
Set $\lambda_1 = i^\sharp(\lambda_2)$, then $$\bfd \lambda _1 = \bfd(i^\sharp \lambda_1)=i^\sharp\bfd(\lambda_1)=i^\sharp \alpha_2 = i^\sharp r^\sharp \alpha_1 = (r\circ i)^\sharp\alpha_2=\alpha_2$$ and
$$|\lambda_1|\le \bdelta(D)|\lambda_2|\le \bdelta(D) |\alpha_1|.\qedhere$$
\end{proof}


To prove the theorem we need to show that if $\alpha\in C^1_c(Y)$ is minimal then $|\alpha|\le \bdelta |\bfd \alpha|$. 
We can view $\alpha$ as a singular pattern $\calF$ on $Y$ by placing a connector on each edge in $\alpha$, and connecting them with a regular segment in a 2-simplex if exactly two out of the three edges of the simplex are in $\alpha$, and with singular segments otherwise\footnote{This is essentially the reverse procedure to the one described in \Cref{section: tracks cochains}.}.

Since $\alpha$ is minimal it is $R$-locally minimal (for all $R$), i.e. $\calF = \calF_R$. By \Cref{the inequality for locally minimizing}, for $D\ge \bdelta$ and $R\ge \bdelta(D)$ we have the desired inequality $|\lambda| \le \bdelta(D,R)|\bfd \lambda|$ for each track $\lambda\subseteq \calF$. Summing over all the tracks we get:
\begin{equation*}|\alpha| = \sum_{\lambda\subseteq \calF} |\lambda|\le \bdelta(D,R) \sum _{\lambda\subseteq \calF}|\bfd \lambda| =\bdelta(D,R)|\bfd \alpha|\qedhere\end{equation*}
\end{proof}

\section{Proof of \Cref{main theorem}}
\label{sec: pf of main theorem}

\begin{lemma}\label{all graphs are equivalent}
Let $G$ act freely and cocompactly on two connected graphs $X,X'$. There exists a constant $C=C(X,X')$ such that if $\phi:H\to G$ is an injective homomorphism, and $\Phi:X\to X$ is $\phi$-equivariant then there exists a $\phi$-equivariant map $\Phi':X'\to X'$ such that 
$$\tfrac 1C\TD(\Phi)\le \TD(\Phi')\le C\cdot \TD(\Phi)$$
and 
$$\tfrac 1C\AD(\Phi)\le \AD(\Phi')\le C\cdot \AD(\Phi)$$
\end{lemma}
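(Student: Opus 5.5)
We transfer $\Phi$ through a pair of $G$-equivariant coarse inverse maps between $X$ and $X'$. Since $G$ acts freely and cocompactly on both connected graphs, we fix $G$-equivariant maps $f:X'^0\to X^0$ and $g:X^0\to X'^0$. To build them, choose a representative of each $G$-orbit of vertices of $X'$, send it to an arbitrary vertex of $X$, and extend by equivariance; freeness makes this well defined. Do the same for $g$. By the Švarc--Milnor lemma both maps are quasi-isometries with constants depending only on $X,X'$ and these choices. In particular, adjacent vertices of $X'$ are sent by $f$ to vertices of $X$ at distance at most some $K=K(X,X')$, and likewise for $g$. Extend $f,g$ to edges by sending each edge to a geodesic edge-path between the images of its endpoints, choosing these equivariantly on orbit representatives.

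Given a $\phi$-equivariant $\Phi:X\to X$, define $\Phi'=g\circ\Phi\circ f:X'\to X'$. On vertices it is $\phi$-equivariant, because for $h\in H$ we have $g\Phi f(hx)=g\Phi(hf(x))=g(\phi(h)\Phi f(x))=\phi(h)\,g\Phi f(x)$. On edges, we take $\Phi'(e')$ to be a geodesic path between the images of the endpoints, chosen on $H$-orbit representatives and extended $H$-equivariantly; this is legitimate since $H$ acts freely on edges. The upper bound on $\ell(\Phi'(e'))$ comes from chasing through the construction: $f(e')$ is a path of at most $K$ edges of $X$, $\Phi$ of it is a path of length $\sum\ell(\Phi(e))$ over those $\le K$ edges, and applying $g$ multiplies lengths by at most $K$. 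Hence
\[
\ell(\Phi'(e'))\le K\sum_{e\in f(e')}\ell(\Phi(e)).
\]

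The main bookkeeping step is summing this over $H$-orbit representatives $e'_1,\dots,e'_{k'}$ of edges of $X'$. Each term $\ell(\Phi(e))$ depends only on the $H$-orbit of $e$, since $\ell(\Phi(he))=\ell(\phi(h)\Phi(e))=\ell(\Phi(e))$. We therefore need to know how many times an $H$-orbit of edges $e$ of $X$ occurs in the union of the paths $f(e'_i)$. Because $f$ is $G$-equivariant, this count is bounded by $K$ times the number of $G$-orbits of edges of $X'$ whose $f$-image can pass through a given $G$-orbit, which is a constant $C_0(X,X')$ independent of $H$. This yields $\TD(\Phi')\le C\,\TD(\Phi)$.

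For the reverse inequality we apply the same construction with the roles of $X$ and $X'$ exchanged, forming $f\Phi'g$. Since $f\circ g$ is within bounded distance $K'$ of the identity, equivariantly, we can compare $\Phi$ with $f\Phi' g$. For an edge $e$ with endpoints $u,v$,
\[
d(\Phi(u),\Phi(v))\le d(f\Phi'g(u),f\Phi'g(v))+2K'',
\]
where $K''$ bounds $d(\Phi(x),f g\Phi(x))$ and is uniform because $fg$ commutes with the action. Summing over orbits gives $\TD(\Phi)\le C\,\TD(\Phi')+C k$, where $k$ is the number of $H$-orbits of edges, proportional to $[G:H]$. The additive error is the main obstacle. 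We absorb it by observing that $\TD(\Phi)\ge c\,k$ for every $\phi$-equivariant $\Phi$. Indeed, if $\Phi$ collapsed a positive proportion of edges there would be no difficulty, but in general we need a lower bound: since $\phi$ is injective and $H$ acts freely, $\Phi$ cannot be constant on a connected fundamental domain spanning all $H$-generators. If this lower bound were only available for maps minimizing $\TD$, we would apply the lemma to minimizers. Alternatively, we can modify $\Phi$ near collapsed edges at bounded cost.

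Finally, the $\AD$ inequalities follow from the $\TD$ inequalities, since $k$ and $k'$ are $k_0[G:H]$ and $k_0'[G:H]$ respectively.
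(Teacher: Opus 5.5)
Your forward bound is the paper's argument. The paper also sets $\Phi'=\Psi\circ\Phi\circ\Psi'$ for $G$-equivariant maps $\Psi:X\to X'$ and $\Psi':X'\to X$, bounds $\ell(\Phi'(e'))\le K_+\sum_{e\subset\Psi'(e')}\ell(\Phi(e))$, and controls multiplicities by a constant $K'_-$ exactly as in your orbit count. The $\AD$ statements then follow from the $\TD$ ones as you say.

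The gap is the reverse inequality. Your displayed estimate $d(\Phi(u),\Phi(v))\le d(f\Phi'g(u),f\Phi'g(v))+2K''$ does not follow. Here $f\Phi'g=(fg)\circ\Phi\circ(fg)$, and $K''$ only absorbs the outer $fg$. The inner one leaves the term $d(\Phi(u),\Phi(fg(u)))$, which is not uniformly bounded because $\Phi$ has no uniform Lipschitz constant. It is bounded only by the $\Phi$-lengths of the edges on a short path from $u$ to $fg(u)$. After summing over $H$-orbits, this puts a multiple of $\TD(\Phi)$ back on the right-hand side, so the estimate is vacuous.

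The absorption step is not available either. The bound $\TD(\Phi)\ge ck$ is false in the generality of the lemma: for $G=\bbZ$, $H=n\bbZ$, $\phi(nm)=m$ and $\Phi(x)=\lfloor x/n\rfloor$ one gets $\TD(\Phi)=1$ while $k=n$. For the hyperbolic groups of this paper with $\phi(H)$ of finite index, it is essentially the left-hand inequality of the main theorem. The paper derives that from a lower bound on the number of tracks together with finite-index rigidity, not from a local modification of $\Phi$.

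In fact no bookkeeping can rescue this route. With edges straightened to geodesics, $g\Phi f$ depends only on the values of $\Phi$ at the vertices $f(v')$. So lengthening the edge-paths $\Phi(e)$ by backtracking makes $\TD(\Phi)$ arbitrarily large without changing $\Phi'$. The same happens if $f$ misses a $G$-orbit of vertices of $X$ and you redefine $\Phi$ $\phi$-equivariantly on that orbit with far-away images.

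The paper never compares $\Phi$ with $f\Phi'g$. It gets the reverse bound by ``Similarly'', i.e. by running the forward construction from $X'$ to $X$. What that argument genuinely gives is this: every $\phi$-equivariant $\Phi'$ on $X'$ produces the $\phi$-equivariant map $f\Phi'g$ on $X$ with $\TD(f\Phi'g)\le C\,\TD(\Phi')$. Together with your forward bound, this yields $\tfrac1C\min_{X}\TD\le\min_{X'}\TD\le C\min_{X}\TD$, the minima taken over $\phi$-equivariant maps. So $\AD(\phi)$ computed in $X$ and in $X'$ agree up to a factor $C$. That is the only way the lemma is used, namely to pass to the graph carrying the bicombing in the proof of the main theorem.

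If you want the literal two-sided statement for the given $\Phi$, the fix is not to straighten. Take $\Phi'=g\circ\Phi\circ f$ as an unreduced composite of cellular maps, with two choices:
\begin{itemize}
\item choose $g$ so that every edge goes to an edge-path of length at least $1$;
\item choose $f$ so that the paths $f(e')$ together traverse every $G$-orbit of edges of $X$.
\end{itemize}
Then $\ell(\Phi'(e'))\ge\sum_{e\subset f(e')}\ell(\Phi(e))$. By equivariance, every $H$-orbit of edges of $X$ occurs among the edges of the paths $f(e'_i)$ for the $H$-orbit representatives $e'_i$. Hence $\TD(\Phi')\ge\TD(\Phi)$ with no additive error. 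The upper bound goes through as before, with $K_+=\max_e\ell(g(e))$.
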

\begin{proof}
        Fix some $G$-equivariant maps $\Psi :X\to X'$ and $ \Psi':X'\to X$. Let $$K_+=\max_{e}\ell(\Psi(e)) \quad \text{and}\quad  K_-=\max _{e'} |\{e\;|\;e'\subset\Psi(e)\}|$$ where $e,e'$ run over the edges of $X,X'$ respectively. 
        Similarly, define $K_+',K_-'$ for the map $\Psi'$.
        
        For a $\phi$-equivariant map $\Phi:X\to X$ one can define the $\phi$-equivariant map $\Phi'=\Psi\circ \Phi \circ \Psi':X'\to X'$.
        For an edge $e'$ of $X'$  we have 
        $$\ell(\Phi'(e'))\le \sum _{e\subset \Psi'(e')}\ell(\Psi\Phi(e))\le K_+ \sum _{e\subset \Psi'(e')}\ell(\Phi(e))$$
        Therefore, 
        \begin{multline*}
            \TD(\Phi')= \sum_{e'}\ell(\Phi'(e'))=K_+\sum_{e'}\sum_{e\subset \Psi'(e')}\ell(\Phi(e))\\
            =K_+\sum_{e}\sum_{e\subset \Psi'(e')}\ell(\Phi(e))\le K_+K'_-\sum_e\ell(\Phi(e)) = K_+K'_- \TD(\Phi)
        \end{multline*}
        where the sums are over edges $e,e'$ of $X/H$ and $X'/H$ respectively.
        Similarly, $$\TD(\Phi)\le K_+'K_-\TD(\Phi').$$

        Finally, let $k_0,k_0'$ denote the number of edge orbits in $X/G,X'/G$ respectively. Then \begin{multline*}
        \AD(\Phi') = \tfrac{1}{k_0'[G:H]}\TD(\Phi')
        \le k_0K_+K'_-\tfrac1{k_0[G:H]}\TD(\Phi)\\
        \le k_0K_+K'_-\AD(\Phi)\end{multline*}
        and 
        $$\AD(\Phi) \le k_0'K_+'K_-\AD(\Phi')\qedhere$$ 
\end{proof}
We are now ready to prove the main theorem of this paper, that we first restate:
\maintheorem*
\begin{proof}
Let $G$ be a geometrically rigid, residually finite, hyperbolic group. Let $X$ be a graph with a free and cocompact action of $G$. \Cref{all graphs are equivalent} shows that up to multiplicative error the total and average distortions do not depend on the graph $X$, and so we may assume that $X$ is the graph given in \Cref{the bicombing} with the bicombing $q$.

For $D\ge \bdelta$, the 2-skeleton $Y=Y_D$ of the $D$-Rips complex $\Rips_D(X)$ is simply connected.

Since $G$ is residually finite, for every $D\ge 0$ and $\rho\ge 0$ we can find a finite index subgroup $\dot G = \dot G_{D,\rho}$ such that $\injrad(Y/\dot G)\ge \rho$.
It follows that for every finite index subgroup $H\le \dot G$ we have $\injrad(\overline Y)\ge \rho$ where $\overline Y = Y/H$.

Consider an injective homomorphism $\phi:H\to G$, let $\Phi:Y\to X$ be the $\phi$-equivariant map that minimizes $\TD_Y(\Phi)$, and in particular sends edges to geodesics paths. Let $\overline\calF$ be the associated foliation on $\overline Y$, and $\calF_R$ its $R$-locally minimizing locus.

By \Cref{the inequality for locally minimizing}, for $D\ge \bdelta$ and $R\ge \bdelta(D)$, we have
\begin{equation}\label{eq: 2}
|\calF_R|=\sum_{\overline\lambda \subseteq \calF_R}|\overline\lambda|\le \bdelta(D,R)\sum _{\overline\lambda \subseteq \calF_R}|\bfd\overline\lambda|.
\end{equation}
where the sum is over the components of $\calF_R$.

By extending slightly each component $\overline\lambda \subseteq \overline\calF_R$ we can make it into a track with dead-ends whose dead-ends are in the interior of segments of $\overline\calF \setminus \overline\calF_R.$ 
Therefore, 
\begin{equation} \label{eq: 3}
    \sum _{\overline\lambda\subseteq \calF_R}|\bdelta \overline\lambda|\le \bdelta(D) |\overline\calF \setminus \overline\calF_R|.
\end{equation}

By \Cref{complement of the minimizing locus}, for $D\ge \bdelta$, $R\ge \bdelta(D)$ and $\rho\ge \bdelta(D,R)$, we have
\begin{equation}\label{eq: 1}
    |\overline\calF \setminus \overline\calF_R|\le \bdelta(D,R)\cdot \vol(\overline Y)
\end{equation}
since $\injrad (\overline Y)\ge \rho$.




Combining the inequalities \eqref{eq: AD < F}, \eqref{eq: 1}, \eqref{eq: 2}, \eqref{eq: 3} and \eqref{eq: vol < index}, we get 
\begin{multline*}
\TD(\Phi)\le |\overline\calF|= |\overline\calF \setminus \overline\calF_R|+|\overline\calF_R|\\\le \bdelta(D,R)|\overline\calF \setminus \overline\calF_R|\le \bdelta(D,R)\vol(\overline Y) \le \bdelta(D,R)[G:H]
\end{multline*}
So, we get the desired inequality
$$\AD(\Phi) \le \bdelta(D,R).$$

\bigskip

Now, if $\phi:H\to H'$ is an isomorphism between two finite index subgroups, then by the proof of Theorem B in
\cite[Inequality (8.3)]{lazarovich2025finite} 
if $D\ge \bdelta$ is such that $\Rips_D(X)$ is contractible, the number of tracks in $\calF$ is bounded below by the index
$$[G:H'] \le \bdelta(D) \#\{\text{tracks in }\overline\calF\}\le \bdelta(D)|\overline\calF|.$$

By property \ref{quasi geodesic} and the construction of the singular pattern $\calF$ it follows that 
$$|\overline\calF| \le \bdelta \TD_Y(\Phi)\le \bdelta(D) \TD(\Phi).$$
By finite index rigidity\footnote{A group is \emph{finite index rigid} if it does not contain isomorphic finite index subgroups of different indices.} of hyperbolic groups \cite[Theorem A]{lazarovich2025finite} the isomorphic subgroups $H,H'$ have the same index, and we get 
$$[G:H]=[G:H'] \le \bdelta (D) \TD(\Phi).$$
Or equivalently, 
\begin{equation*}\frac 1{\bdelta(D)}\le \AD(\Phi).\qedhere\end{equation*}
\end{proof}

\begin{remark}
Without the finite index rigidity of hyperbolic groups \cite{lazarovich2025finite} the proof shows that $\tfrac 1 \bdelta [G:H']\le \TD(\Phi)\le \bdelta [G:H]$ and in particular that if $H,H'\le \dot G$ are isomorphic finite index subgroups then
    $ [G:H']\le \bdelta [G:H].$
    The next lemma shows that finite index rigidity follows.
\end{remark}

\begin{lemma}
    Let $G$ be a group. Assume that there exists $C\ge 1$ and a finite index subgroup $\dot G\le G$ such that for all isomorphic finite index subgroups $H,H'\le \dot G$ 
    \begin{equation}\label{index bounds} [G:H']\le C\cdot  [G:H].
    \end{equation}
    Then, $G$ is finite index rigid.
\end{lemma}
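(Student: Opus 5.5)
The idea is to argue by contradiction: if some finite index subgroup $H$ is isomorphic to a subgroup of different index, then iterating the isomorphism produces isomorphic pairs inside $\dot G$ whose index ratio grows without bound, contradicting \eqref{index bounds}.

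Suppose $\phi:H\to H'$ is an isomorphism between finite index subgroups of $G$ with $[G:H]\neq[G:H']$. By replacing $\phi$ with $\phi^{-1}$ if necessary, assume $[G:H]<[G:H']$, and set $t=[G:H']/[G:H]>1$.

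First move into $\dot G$. Let $K=H\cap\dot G$, which has finite index, and let $K'=\phi(K)\le H'$. Then
\[
[H':K'] = [H:K] = \frac{[G:K]}{[G:H]},
\qquad\text{so}\qquad
[G:K'] = [G:H'][H':K'] = t\,[G:K].
\]
The subgroup $K'$ need not lie in $\dot G$, so pass to $L=\phi^{-1}(K'\cap\dot G)\le K\le\dot G$ and $L'=\phi(L)=K'\cap\dot G\le\dot G$. Since $\phi|_K:K\to K'$ is an isomorphism, $[K:L]=[K':L']$, and therefore
\[
[G:L']=[G:K'][K':L']=t\,[G:K][K:L]=t\,[G:L].
\]
So $\phi$ restricts to an isomorphism $L\to L'$ between finite index subgroups of $\dot G$ with the same index ratio $t$.

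Next amplify the ratio by iterating. Define $L_0=L$ and $L_{n+1}=\phi^{-1}(L_n\cap L')$. By induction $L_{n+1}\le L\le\dot G$, and the map
\[
\phi^{n}:L_{n}\to M_n:=\phi^n(L_n)
\]
is defined and injective. At each step the ratio is multiplied by $t$. For a subgroup $A\le L$ we have $[G:\phi(A)]=t[G:A]$, because $[L':\phi(A)]=[L:A]$. Since $M_n\le L'\le\dot G$, each pair $(L_n,M_n)$ consists of isomorphic finite index subgroups of $\dot G$ with $[G:M_n]=t^n[G:L_n]$. The bookkeeping that needs checking is that $\phi^n(L_n)\le L'$ and that the index multiplies by $t$ at each application of $\phi$. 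A cleaner way to arrange this is to set $L_n=\bigcap_{k\le n}\phi^{-k}(L)$ with $\phi^{-1}$ defined on $L'$, and apply the index identity $n$ times.

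To conclude, choose $n$ with $t^n>C$. Then \eqref{index bounds} applied to $H=L_n$ and $H'=M_n$ is violated. Hence no such $\phi$ exists, and $G$ is finite index rigid. The main obstacle is only this domain bookkeeping for the iterates; the key point is that the ratio $t$ is preserved under restriction to finite index subgroups.
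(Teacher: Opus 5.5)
Your proof is correct, but it follows a different route from the paper's. The paper does not iterate. It defines $\MI(K,G)$, the largest index of a finite index subgroup of $G$ isomorphic to $K$, and the max index gradient $\MIG(H,G)=\limsup_K \MI(K,G)/[H:K]$, with the limsup taken over the directed set of finite index subgroups $K\le H$. Three facts then finish the proof:
\begin{itemize}
\item by cofinality, $\MIG(H,G)=[G:H]\,\MIG(G,G)$;
\item $\MIG(H,G)=\MIG(H',G)$ whenever $H\cong H'$;
\item the hypothesis gives $1\le\MIG(G,G)\le C$.
\end{itemize}
Dividing by $\MIG(G,G)$ then gives $[G:H]=[G:H']$.

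You instead amplify the index ratio. You first restrict $\phi$ to a pair $L\to L'$ inside $\dot G$ with the same ratio $t$. You then iterate on $L_n=\{x\in L:\phi^k(x)\in L \text{ for } 0\le k\le n\}$, which yields isomorphic pairs $L_n\cong\phi^n(L_n)$ in $\dot G$ with ratio $t^n$, contradicting the bound.

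The bookkeeping you flagged does check out:
\begin{itemize}
\item For $n\ge 1$ you have $\phi^n(L_n)\le L\cap L'$. Only $\phi^n(L_n)\le L\le\dot G$ is actually needed, so the claim $M_n\le L'$ fails only at $n=0$, harmlessly.
\item Since $\phi^k(L_n)\le L$ for all $k<n$, the identity $[G:\phi(A)]=t[G:A]$ for $A\le L$ applies at every step.
\end{itemize}

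What each route buys: your argument is more elementary and makes the restriction to $\dot G$ explicit. In the paper that step is hidden in the claim $\MIG(G,G)\le C$. The competitors $K'$ in $\MI(K,G)$ need not lie in $\dot G$, and proving $\MI(K,G)\le C[G:K]$ for $K\le\dot G$ requires exactly your first reduction: intersect $K'$ with $\dot G$ and pull back. The paper's approach packages everything into a volume-like numerical invariant that scales linearly with index and is isomorphism invariant. This avoids constructing explicit iterated subgroups and makes transparent why a ratio $t\neq 1$ would force $\MIG(G,G)\in\{0,\infty\}$.
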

\begin{proof}
    Define the \emph{max index of $H$ in $G$} to be $$\MI(H,G) := \sup\{[G:K]\;|\;H\simeq K\le G, [G:K]<\infty\}$$
    and the \emph{max index gradient} to be
    $$\MIG(H,G) := \limsup _{K\le H, [H:K]<\infty} \frac {\MI(K,G)}{[H:K]}$$
    where the limsup is taken over the directed set of finite index subgroups of $H$.
    If $H,H'\le G$ are isomorphic finite index subgroup then it is easy to verify that 
    \begin{equation}\label{equality of MIG}[G:H']\MIG(G,G)=\MIG(H',G)=\MIG(H,G) = [G:H]\MIG(G,G)
    \end{equation}
    The inequality \eqref{index bounds} shows that $0<1 \le \MIG(G,G)\le C<\infty .$ 
    Dividing both sides of \eqref{equality of MIG} shows that  $[G:H]=[G:H']$.
\end{proof}

\section{Applications}
\label{sec: applications}

\subsection{Commensurators of manifolds}
    The goal of this subsection is to prove \Cref{main theorem manifolds}. 
    We begin with a technical lemma about mapping a standard Euclidean simplex to a CAT(0) space:

    Let $e_0,\dots,e_n$ be the standard basis vectors of $\bbR^{n+1}$. Let $\Delta^n$ be the standard $n$-simplex $\Delta^n=[e_0,\dots,e_n]$.

    Let $X$ be a CAT(0) space, and let $x_0,\dots,x_n\in X$.
    Define $f=f_{x_0,\dots,x_n}:\Delta^n\to X$ by induction on $n$. For $n=0$, $\Delta^0=\{e_0\}$, and define $f(e_0)=x_0$.
    To define $f$ on $\Delta^n$, assume that $f|_{\Delta^{n-1}}$ was defined, and let $x\in \Delta^n$. Write $x=(1-t)x'+t e_n$ for $x'\in \Delta^{n-1}$ and $0\le t\le 1$. Let $\gamma_x:[0,1]\to X$ be the unique constant speed geodesic connecting $f(x')$ and $x_n$. Define $f(x)=\gamma_x(t)$.
    Note that by definition $f(e_i)=x_i$.
    
\begin{lemma}\label{lemma: Lipschitz on simplex}
    For all $n$ there exists $C_n\ge 1$ such that the map $f$ is $K$-Lipschitz for $K=C_n\cdot \sum_{0\le i<j\le n} d(x_i,x_j)$. 
\end{lemma}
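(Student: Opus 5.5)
We argue by induction on $n$, proving the slightly stronger statement that $f$ is $K$-Lipschitz with respect to the Euclidean metric on $\Delta^n\subseteq\bbR^{n+1}$, where $K=C_n S_n$ and $S_n=\sum_{0\le i<j\le n}d(x_i,x_j)$. For $n=0$ there is nothing to prove. For the inductive step, write $f'=f_{x_0,\dots,x_{n-1}}$. By induction $f'$ is $C_{n-1}S_{n-1}$-Lipschitz, with $S_{n-1}\le S_n$. We also record that the image of $f'$ lies in the closed convex hull of $x_0,\dots,x_{n-1}$. Since CAT(0) balls are convex, this gives $d(f'(x'),x_n)\le \max_i d(x_i,x_n)\le S_n$ for all $x'\in\Delta^{n-1}$; the same convexity argument, applied inductively, shows $f'(\Delta^{n-1})\subseteq \calB(x_n,S_n)$.

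Now take $x=(1-t)x'+te_n$ and $y=(1-s)y'+se_n$, and estimate $d(f(x),f(y))$ via the intermediate point $\gamma_y(t)$. For the first term, the CAT(0) comparison for two geodesics issuing from the common endpoint $x_n$ gives
\[
d(\gamma_x(t),\gamma_y(t))\le (1-t)\,d(f'(x'),f'(y'))\le (1-t)C_{n-1}S_n\|x'-y'\|.
\]
This is the standard convexity of the distance function, applied to geodesics that share an endpoint. For the second term, since $\gamma_y$ has constant speed, $d(\gamma_y(t),\gamma_y(s))=|t-s|\,d(f'(y'),x_n)\le S_n|t-s|$.

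It remains to compare $(1-t)\|x'-y'\|+|t-s|$ with $\|x-y\|$, and this is the one genuinely delicate point. The map $x\mapsto(x',t)$ is singular at the vertex $e_n$: as $t\to1$, nearby points $x,y$ can have far-apart $x'$ and $y'$. The weight $(1-t)$ is exactly what compensates for this. Explicitly, $t$ is the last coordinate, so $|t-s|\le\|x-y\|$. Moreover, $(1-t)x'-(1-s)y'$ is the first $n$ coordinates of $x-y$, and hence
\[
(1-t)\|x'-y'\|\le \|(1-t)x'-(1-s)y'\|+|t-s|\,\|y'\|\le \|x-y\|+\sqrt{2}\,\|x-y\|,
\]
using $\|y'\|\le 1$ (up to a constant). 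Combining these bounds yields $d(f(x),f(y))\le \big(C_{n-1}(1+\sqrt2)+1\big)S_n\|x-y\|$, so we may take $C_n=(1+\sqrt2)C_{n-1}+1$. This completes the induction.
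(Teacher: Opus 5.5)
Your proof is correct and follows essentially the same route as the paper: induction on $n$, splitting $d(f(x),f(y))$ through the intermediate point $\gamma_y(t)$, bounding the first term by CAT(0) comparison for geodesics ending at $x_n$ and the second by constant speed plus convexity of the distance function, then comparing $(1-t)\|x'-y'\|+|t-s|$ with $\|x-y\|$. The only difference is cosmetic: you do the last Euclidean comparison with coordinates and the triangle inequality (and $\|y'\|\le 1$ already gives a factor $2$, so the $\sqrt2$ is unnecessary), whereas the paper uses the angle $\theta_n$ and the inequality $\|a-b\|\ge\sin(\tfrac12\angle(a,b))(\|a\|+\|b\|)$, which yields $C_n=C_{n-1}/\sin(\tfrac12\theta_n)$.
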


\begin{proof}
    Let us prove this by induction on $n$. For $n=0$ there is nothing to prove.
    Let $n\ge 0$, assume that there exists $C_{n-1}\ge 1$ such that $f|_{\Delta^{n-1}}$ is $\left(C_{n-1}\cdot \sum_{0\le i<j\le n-1} d(x_i,x_j)\right)$-Lipschitz.

    Let $\theta_n$ be the smallest angle that a line through $\Delta^{n-1}$ and $e_n$ makes with $\Delta^{n-1}$, and let $C_n = \frac{C_{n-1}}{\sin(\tfrac12 \theta_n)}$.
    We will show that $f$ is $\left(C_n\cdot \sum_{0\le i<j\le n} d(x_i,x_j)\right)$-Lipschitz on $\Delta^n$.
    
    Let $u,v\in \Delta^n$.
    Write $u=(1-t)u'+te_n$ and $v=(1-s)v'+se_n$ with $0\le s,t\le 1$ and $u',v'\in \Delta^{n-1}$ and set $\theta :=\angle(e_n-v',v'-u')\ge \theta_n$.
    See \Cref{fig:mapping simplex}.

    We have
    \begin{figure}
        \centering
        \includegraphics[width=0.9\textwidth]{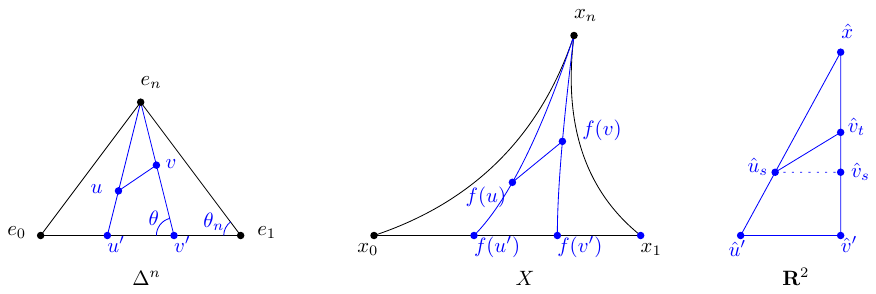}
        \caption{The simplex $\Delta^n$, its image in $X$ and the comparison triangle in Euclidean space.}
        \label{fig:mapping simplex}
    \end{figure}
    \begin{align*} 
    \|u-v\|&= \|(1-t)u'- (1-s)v'+(t-s)e_n\|\\
    &= \|(1-t)(u'-v')+(t-s)(e_n-v')\|\\
    &\ge \sin(\tfrac 12\theta)\cdot \left((1-t)\|u'-v'\|+|t-s|\|e_n-v'\|\right)
    \end{align*}
    where the last inequality we used the inequality $\|x-y\|\ge \sin(\tfrac12\angle(x,y))(\|x\|+\|y\|)$. 
    Using $\theta\ge \theta_n$ and $\|e_n-v'\|\ge d_{\bbR^n}(e_n,\Delta^{n-1})\ge 1$, we get
    \begin{equation}\label{lower bound euclidean}
        \|u-v\|\ge \sin(\tfrac12\theta_n)\cdot ((1-t)\|u'-v'\|+|t-s|)
    \end{equation}
    
    Consider the triangle $u',v',e_n$ in $\Delta^n$ and the geodesic triangle in $X$ spanned by their images $f(u'),f(v'),f(e_n)=x_n$. Note that two of the sides of this triangle are the geodesics $\gamma_u,\gamma_v$, and that by definition $f(u)=\gamma_u(t)$ and $f(v)=\gamma_v(s)$.

    Consider the comparison triangle $\hat u',\hat v',\hat x$ in $\bbR^2$ for $u',v',x_n$, and the points $$\hat u_t = (1-t)\hat u'+t\hat x,\quad  \hat v_t = (1-t)\hat v'+t\hat x, \quad \hat v_s=(1-s)\hat v'+s\hat v'$$ on this triangle. Then, 

    \begin{align*}
        d(f(u),f(v))&= d(\gamma_u(t),\gamma_v(s))\\
        &\le d(\hat u_t,\hat v_s) &\text{CAT(0)-ineq.}\\
        &\le d(\hat u_t,\hat v_t)+d(\hat v_t,\hat v_s) &\Delta\text{-ineq.}\\
        &\le (1-t)\|\hat u'-\hat v'\|+|s-t|\cdot \|\hat x-\hat v'\| &\gamma_v\text{ is unit speed}\\
        &= (1-t)d(f(u'),f(v'))+|s-t|\cdot d(x_n,f(v'))\\
        &\le (1-t)\|u'-v'\|C_{n-1}\sum_{0\le i<j\le n-1} d(x_n,x_i) &\text{IH}
        \\
        &\quad\quad + |s-t|\sum_{i=0}^nd(x_n,x_i) &\text{convexity of }d\\
        &\le C_{n-1}((1-t)\|u'-v'\|+|s-t|)\sum_{0\le i<j\le n}d(x_i,x_j)\\
        &\le \frac{C_{n-1}}{\sin(\tfrac12 \theta_n)} \left(\sum_{0\le i<j\le n}d(x_i,x_j)\right) \|u-v\| &\text{by \eqref{lower bound euclidean}}\\
        &=C_n\left(\sum_{0\le i<j\le n}d(x_i,x_j)\right)\|u-v\|
    \end{align*}
    Proving that $f$ is $\left(C_n\cdot \sum_{0\le i<j\le n}d(x_i,x_j)\right)$-Lipschitz.
\end{proof}

\begin{proof}[Proof of \cref{main theorem manifolds}]
Let $M$ be a closed Riemannian manifold of negative sectional curvature of dimension $n\ge 3$. 
The fundamental group $G = \pi_1(M)$ is hyperbolic. Since $n\ge 3$ the boundary $\partial G\homeo \bbS^{n-1}$ has no local cut points, and so $G$ is geometrically rigid.
Assume that $G$ is residually finite.

Let $T$ be a simplicial triangulation of $M$.
Let $\tild T$ be the lift of $T$ to the universal cover $\tild M$. 
Choose some ordering $v_1<\dots <v_m$ of the vertices of $T$. 
This ordering induces an ordering on the vertices of each simplex of $T$, and thus also on each simplex of $\tild T$.

Let $X$ be the 1-skeleton of $\tild T$.
The fundamental group $G$ acts freely on $X$ 
(and on $\tild M,\tild T$) by deck transformations.
Let $\dot G\le G$ be the finite index subgroup of \Cref{main theorem} corresponding to the action $G\actson X$, and let $\dot M$ be the finite cover of $M$ corresponding to $\dot G\le G$.

Given a homotopy equivalence $F:M_1\to M_2$ between a finite index cover $M_1$ of $\dot M$ and a cover $M_2$ of $M$, let $\phi=F_*:H_1=\pi_1(M_1)\to H_2=\pi_1(M_2)$ be the induced map on fundamental groups.
By \Cref{main theorem} there exists a $\phi$-equivariant map $\Phi:X\to X$ such that 
\begin{equation}\label{avgdist inequality for smooth proof}
    \AD(\Phi)\le C.
\end{equation}

For each simplex $\sigma=[v_0,\dots,v_k]$ of $\tild T$ (whose vertices are ordered $v_0<\dots<v_k$), let $i_\sigma:\Delta^n=[e_0,\dots,e_k]\to \sigma=[v_0,\dots,v_k]$ be the characteristic map.
We may assume that the map $i_\sigma$ is $K$-bi-Lipschitz where $K=K(M)$.
Let us define the map $\tild f:\tild M\to \tild M$ on each $n$-simplex $[v_0,\dots,v_n]$ to be the composition $f_\sigma\circ i_\sigma\ii $ where $f_\sigma := f_{\Phi(v_0),\dots,\Phi(v_n)}:\Delta^n\to \tild M$ is the map defined at the beginning of this subsection sending $e_i$ to $\Phi(v_i)$.
Note that $\tild M$ is indeed a CAT(0) space by the Cartan-Hadamard Theorem. 
Note also that $\tild f$ is $\phi$-equivariant, and so induces a map $f:M_1\to M_2$ which is homotopic to $F$.

By \Cref{lemma: Lipschitz on simplex}, the map $\tild f$ is 
$\left(\bdelta \sum_{0\le i<j\le n}\ell(\Phi([v_i,v_j])\right)$-Lipschitz on each $n$-simplex.
By Rademacher's Theorem $f$ is almost-everywhere differentiable, and on each simplex $[v_0,\dots,v_n]$
satisfies 
$$\int_{T^1[v_0,\dots,v_n] } \|D_x\Psi(v)\|\bfd\vol(x,v)\le \bdelta \sum_{0\le i<j\le n}\ell(\Phi([v_i,v_j]).$$
Summing over the simplices of $M_1$, 
and using that each edge of the $T$ is in $\bdelta$ $n$-simplices of $T$, we get $$\int_{T^1M_1 } \|D_xf(v)\|\bfd\vol(x,v)\le \bdelta \TD(\Phi)$$
Clearly, $\Vol(T^1M_1)\propto [G:H_1]$, and so by \eqref{avgdist inequality for smooth proof},
$$\tfrac 1{\Vol(T^1M_1)}\int_{T^1M_1} \|D_xf(v)\|\bfd\vol(x,v)\le \bdelta \AD(\Phi)\le \bdelta.$$
By the Meyers-Serrin Theorem we may approximate $f$ by a smooth function (in the $W^{1,1}$-Sobolev norm) which is homotopic to $f$ and will satisfy a similar average distortion bound.
\end{proof}

\subsection{Outer automorphisms}
\begin{lemma}\label{out characteristic}
    Let $K\le G$ be a characteristic subgroup of finite index with finite center. Then, the restriction map  $\Aut(G)\to \Aut(K)$ has finite kernel. 
    In particular, if $|\Out(K)|<\infty$ then $|\Out(G)|<\infty$.
\end{lemma}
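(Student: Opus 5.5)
The plan is to identify the kernel of the restriction map explicitly and show it is finite by an elementary counting argument. Since $K$ is characteristic, every $\alpha\in\Aut(G)$ satisfies $\alpha(K)=K$. So restriction $\alpha\mapsto \alpha|_K$ is a well-defined homomorphism $\Aut(G)\to\Aut(K)$. Let $\alpha$ lie in the kernel, i.e.\ $\alpha(k)=k$ for all $k\in K$.

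For $g\in G$ set $c(g)=\alpha(g)g\ii$. The first step is to show that $c(g)$ centralizes $K$. For $k\in K$ we have $gkg\ii\in K$, since $K$ is normal. Applying $\alpha$ gives
\[
\alpha(g)\,k\,\alpha(g)\ii=\alpha(gkg\ii)=gkg\ii .
\]
Hence $c(g)=\alpha(g)g\ii$ lies in the centralizer $C_G(K)$.

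The second step is to show that $C_G(K)$ is finite. Its intersection with $K$ is $Z(K)$, which is the center we assume finite; here I read the hypothesis as ``$K$ has finite center'', and if it means $Z(G)$ I would instead pass through $C_G(K)$ directly. Moreover, $C_G(K)$ embeds modulo $C_G(K)\cap K$ into $G/K$, which is finite. Therefore $|C_G(K)|\le |Z(K)|\,[G:K]<\infty$.

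The third step is to see that $\alpha$ is determined by finitely many choices. Fix coset representatives $t_1,\dots,t_m$ of $G/K$. Then $\alpha$ is determined by its values on $K$ (the identity) together with the values $\alpha(t_i)=c(t_i)t_i$. Each $c(t_i)$ lies in the finite set $C_G(K)$, so the kernel has at most $|C_G(K)|^{[G:K]}$ elements.

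For the ``in particular'' statement, note that restriction sends $\Inn(G)$ into the subgroup of $\Aut(K)$ generated by conjugations by elements of $G$. This subgroup contains $\Inn(K)$ with index at most $[G:K]$. Write $\rho:\Aut(G)\to\Aut(K)$ for the restriction map. Then $\rho(\Aut(G))$ has finite image in $\Out(K)$ whenever $\Out(K)$ is finite. So $\rho\ii(\Inn(K))$ has finite index in $\Aut(G)$.

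It then suffices to show that $\Inn(G)$ has finite index in $\rho\ii(\Inn(K))$. If $\rho(\alpha)=\mathrm{conj}_k$ with $k\in K$, then $\mathrm{conj}_k\ii\circ\alpha$ lies in the finite kernel. Hence $\rho\ii(\Inn(K))\subseteq \Inn(G)\cdot\ker\rho$, which gives finite index. The only mildly delicate point is this last bookkeeping; I do not anticipate a real obstacle.
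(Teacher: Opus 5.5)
Your proof is correct. It differs from the paper's only in how the kernel is shown to be finite. The paper first passes to the finite-index subgroup $\calL=\ker(\Aut(G)\to\Aut(G/K))$. For $\phi\in\calK\cap\calL$ this forces the correction $k_i=t_i^{-1}\phi(t_i)$ to lie in $K$, and the commutation computation then puts it in $Z(K)$, giving $|\calK|\le[\Aut(G):\calL]\cdot|Z(K)|^{[G:K]}$. You skip $\calL$: the same computation shows $\alpha(g)g^{-1}\in C_G(K)$ for every $g$, and $C_G(K)\cap K=Z(K)$ together with $C_G(K)/Z(K)\hookrightarrow G/K$ makes the whole centralizer finite. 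This gives a one-step argument with the explicit bound $(|Z(K)|[G:K])^{[G:K]}$ and no reference to $\Aut(G/K)$. The paper's version instead keeps every correction inside $K$, at the cost of an extra finite-index reduction. Your treatment of the ``in particular'' clause is the paper's; there, $\Aut(G)/K$ means $\Aut(G)$ modulo conjugations by elements of $K$.

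There are two small corrections. First, $\mathrm{Inn}(G)$ need not lie in $\rho^{-1}(\mathrm{Inn}(K))$, since conjugation by $t_i$ restricted to $K$ need not be inner in $K$. So ``$\mathrm{Inn}(G)$ has finite index in $\rho^{-1}(\mathrm{Inn}(K))$'' is not meaningful as written. What your computation actually shows is $\rho^{-1}(\mathrm{Inn}(K))=\mathrm{Inn}_K(G)\cdot\ker\rho$, where $\mathrm{Inn}_K(G)\le\mathrm{Inn}(G)$ consists of conjugations by elements of $K$. Equivalently, $\mathrm{Inn}(G)\ker\rho$ is a subgroup (because $\mathrm{Inn}(G)$ is normal) of finite index in $\Aut(G)$, and it contains $\mathrm{Inn}(G)$ with index at most $|\ker\rho|$. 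Your preliminary remark about the index of $\mathrm{Inn}(K)$ among the $G$-conjugations is then unnecessary.

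Second, drop the hedge about $Z(G)$. The hypothesis is that $K$ has finite center, which is how the lemma is applied to a non-elementary hyperbolic $K$. With only $Z(G)$ finite, neither the statement nor your proposed fallback survives. Take $K=\langle t\rangle$, which is characteristic in $D_\infty=\langle s,t\mid s^2,\ stst\rangle$, where $Z(D_\infty)$ is trivial. Then $C_G(K)=K$ is infinite, and the maps $t\mapsto t$, $s\mapsto st^n$ for $n\in\bbZ$ give infinitely many automorphisms restricting to the identity on $K$.
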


\begin{proof}
Let $\calK = \ker(\Aut(G)\to\Aut(K))$. 
Every automorphism of $G$ induces an automorphism of $\Aut(G/K)$. If $\calL$ denotes the kernel of the map $\Aut(G)\to \Aut(G/K)$, then $\calL\le \Aut(G)$ has finite index, and so $[\calK:\calK\cap \calL]<\infty$. Therefore to show that $|\calK|<\infty$ it suffices  suffices to show that $|\calL\cap \calK|<\infty$. 

Let $\phi\in \calK\cap \calL$ then $\phi|_K=\id$, and for all $g\in G$ $\phi(g)=gk$ for some $k\in K$.
Let $t_1,\dots,t_n$ be coset representatives of $K\le G$, and let $k_1,\dots,k_n\in K$ be such that $\phi(t_i)=t_ik_i$. Then, since $K$ is normal in $G$ and $\phi|_K=\id$ we have for all $k\in K$ 
$$t_ikt_i\ii=\phi(t_ikt_i\ii) = t_ik_ikk_i\ii t_i\ii.$$
This implies that $k_i\in Z(K)$. 
Since $|Z(K)|<\infty$ there are finitely many options for choosing $k_1,\dots,k_n$. 
Since $\phi|_K$ and $\phi(t_1),\dots,\phi(t_n)$ determine $\phi$, we get that there are only finitely many such automorphisms $\phi$.
This finishes the proof that $|\calK|<\infty$.

The map $\Aut(G)\to \Aut(K)$ descends to a map $\Aut(G)/K\to \Out(K)$ which also has finite kernel. It follows that if $|\Out(K)|<\infty$ then $|\Aut(G)/K|<\infty$.
Finally, $$|\Out(G)|=|\Aut(G)/G|\le|\Aut(G)/K|<\infty.\qedhere$$ 
\end{proof}

\begin{proof}[Proof of \cref{paulin thm}]
Let $\dot G\le G$ be as in \Cref{main theorem}, and let $K\le \dot G$ be a finite index characteristic subgroup of $G$. Then by \Cref{main theorem}, there exists $C$ such that for all $\phi\in \Aut(K)$, there exists $\Phi:X\to X$ such that $\TD(\Phi) \le C[G:K]$. There are finitely many such maps $\Phi$ up to translation, and so finitely many automorphisms $\phi$ up conjugacy.
This shows that $|\Out(K)|<\infty$. 
Moreover, $K$ has finite center since it is a non-elementary hyperbolic group. 
By \Cref{out characteristic}, we have $|\Out(G)|<\infty$ as well.
\end{proof}

\subsection{Average distortion of translation lengths}
Recall that for $\gamma\in G$, $\ell(\gamma)=\lim_{k\to \infty}\tfrac 1k \|\gamma^k\|$. We remark that if $X$ is the Cayley graph of $G$, then for all $o\in X$
\begin{equation}\label{eq: general translation length equality}
    \ell(\gamma)=\lim_{k\to \infty} \tfrac 1k d(o,\gamma^ko).
\end{equation}

\begin{proof}[Proof of \cref{cor: AL}]
Let $G$ be a residually finite geometrically rigid hyperbolic group. 
Let $g\in G$ and $\phi:H\to G$ be an injective isomorphism from a finite index subgroup $H\le G$. 
We want to show that \begin{equation}\label{eq: desired AL inequality} \AL(\phi,g)\le \bdelta \ell(g).\end{equation}
Since $\AL(g,\phi)$ is invariant under restricting $\phi$ to a smaller subgroup, we may assume that $H\le \dot G$ where $\dot G\le G$ as in \cref{main theorem} and that $H$ is normal.
Replacing $g$ by $g^k$ multiplies both sides of \eqref{eq: desired AL inequality} by $k$, and so it suffices to prove the inequality for a power of $g$. 
By replacing $g$ by a power $g^k$, we may assume that $g \in H$ (and therefore also $tgt\ii\in H$ for all $t\in G$).

By \cref{main theorem}, there exists a $\phi$-equivariant map $\Phi:X\to X$ such that $\AD(\Phi)\le \bdelta$.
Let $t_1,\dots,t_n$ be coset representatives of $H$ in $G$, and let $e$ be an edge of $X$. 
Then,
$$\tfrac1n\sum_{i=1}^n \ell(\Phi(t_i e))\le \tfrac 1n\TD(\Phi)\le k_0\AD(\Phi)\le \bdelta$$
where $k_0$ is the number of edge orbits of $G$ on $X$.

Similarly, for every $x,y\in X$, let $f_1,\dots,f_r$ be the edges along the geodesic path between $x,y$.
Then,
\begin{multline}\label{eq: average of distances}
    \tfrac 1 n\sum_{i=1}^nd(\Phi(t_ix),\Phi(t_iy))\le 
\tfrac 1 n\sum_{i=1}^n \sum _{j=1}^r\ell (\Phi(t_if_j))\\
= \sum _{j=1}^r\left(\tfrac 1 n\sum_{i=1}^n \ell (\Phi(t_if_j))\right)\le \bdelta \cdot r=\bdelta\cdot d(x,y).
\end{multline}

Therefore,
\begin{align*}
\AL(\phi,g)&=\tfrac 1n \sum_{i=1}^n\ell(\phi(t_i g t_i\ii))\\
&= \tfrac 1n \sum_{i=1}^n \left(\lim _{k\to \infty}\tfrac 1kd(\Phi(t_ix),\phi(t_ig^kt_i\ii)\Phi(t_i x))\right) &\text{by \eqref{eq: general translation length equality}}\\
&=\lim_{k\to \infty}\tfrac 1k \left(\tfrac 1n \sum_{i=1}^n d(\Phi(t_ix),\Phi(t_ig^kx))\right)&\text{by }\phi\text{-equivariance}\\
&\le \lim_{k\to \infty}\tfrac 1k \bdelta d(x, g^k x) &\text{by \eqref{eq: average of distances}}\\
&=\bdelta \ell(g) &\text{by \eqref{eq: general translation length equality}}.\quad \qedhere
\end{align*}
\end{proof}

\subsection{Relative commensurators}

\begin{lemma}\label{exp divergence from geodesic}
    Let $X$ be a hyperbolic graph. Let $\alpha$ be an infinite geodesic. Let $\gamma$ be a path in $X$ with endpoints $\gamma_\pm$, let $x_\pm$ be nearest point projections of $\gamma_\pm$ to $\alpha$. If  $d(x_-,x_+)\ge \bdelta$ then $$\length(\gamma)\ge \exp(\tfrac 1 \bdelta d(\gamma,\alpha)) d(x_-,x_+)$$
    where $d(\gamma,\alpha)$ is the shortest distance between points of $\gamma$ and $\alpha$.
\end{lemma}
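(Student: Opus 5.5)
We prove the lemma by the standard exponential divergence argument for paths in hyperbolic spaces, applied to nearest point projection onto the geodesic $\alpha$. Set $r = d(\gamma,\alpha)$. We may assume $r\ge \bdelta$; for smaller $r$ the claimed exponential factor is bounded by a constant depending only on $X$. In that regime the inequality reduces to $\length(\gamma)\ge \tfrac1\bdelta d(x_-,x_+)$, which follows because $\pi_\alpha$ is coarsely $1$-Lipschitz, $d(\pi_\alpha(a),\pi_\alpha(b))\le d(a,b)+\bdelta$, once $d(x_-,x_+)\ge\bdelta$.

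The key input is the contraction property of projection to a geodesic. If $a,b\in X$ satisfy $d(a,\alpha),d(b,\alpha)\ge r$ and $d(a,b)\le r$, then $d(\pi_\alpha(a),\pi_\alpha(b))\le \bdelta$. This follows from \Cref{facts about qc sets}\ref{nearest point quadrilateral}. Indeed, $\gen{a,b}_{\pi_\alpha(a)}\ge d(a,\alpha)-d(a,b)$, which is large compared with $\bdelta$ once $d(a,b)$ is somewhat less than $r$. More precisely, take pieces of length $r/2$, so the Gromov product is at least $r/2\ge\bdelta$.

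Next we subdivide $\gamma$ into consecutive subpaths of length at most $r/2$. There are at most $2\length(\gamma)/r+1$ of them. Each subpath lies outside $\calN_{r-1}(\alpha)$, so consecutive subdivision points have projections at distance at most $\bdelta$. Summing gives
\[
d(x_-,x_+)\le \bdelta\left(\tfrac{2\length(\gamma)}{r}+1\right).
\]
This is only a linear gain, $\length(\gamma)\succeq r\,d(x_-,x_+)$. To upgrade it to exponential growth, we iterate by halving the distance. Choose the geodesic points $y_\pm\in[\gamma_\pm,x_\pm]$ at distance $r/2$ from $\alpha$, and use the $\bdelta$-thin quadrilateral argument (as in \cite[Lemmas 3.1, 3.2]{mitra-trees}) to project $\gamma$ onto $\calN_{r/2}(\alpha)$. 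This set is $\bdelta$-quasiconvex by \Cref{facts about qc sets}. The projection of a path segment of length $\ell$ far from this set has length $\le \bdelta$ whenever $\ell\le r/2$.

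The cleaner route is the standard one (Bridson--Haefliger III.H.1.6 style): any path from $\gamma_-$ to $\gamma_+$ staying outside $\calN_r(\alpha)$ has length at least $2^{(r-\bdelta)/\bdelta}$ times the distance between the projections. We prove this by induction on $r$ in steps of $\bdelta$. A path avoiding $\calN_{r}(\alpha)$ projects, via nearest point projection onto the level set $\{z: d(z,\alpha)=r-\bdelta\}$, to a coarse path whose length is at most half the original length. Concatenating with the at most $\bdelta$-long connections and applying the inductive hypothesis at level $r-\bdelta$ gives a factor of $2$ per step, hence $\exp(\tfrac1\bdelta r)$. The base case is the linear bound above.

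The main obstacle is the halving step: we must show that projecting to the nearer level set shrinks lengths by a definite factor. For this we use thinness. For two points $a,b$ at distance $\ge r$ from $\alpha$ with $d(a,b)\le \bdelta$, their projections to the $(r-\bdelta)$-level along geodesics to $\alpha$ are at distance at most $\tfrac12 d(a,b)+\bdelta$, because geodesics toward $\alpha$ from nearby points fellow-travel and converge. We then absorb the additive errors by requiring $d(x_-,x_+)\ge\bdelta$.
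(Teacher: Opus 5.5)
Your overall plan (a linear bound from the contraction of $\pi_\alpha$, then an induction on the level in which pushing $\gamma$ down to a lower level set of $d(\cdot,\alpha)$ shrinks its length by a fixed factor) could be made to work. However, the step you flag as the main obstacle fails as you justify it.

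\textbf{Why the halving estimate fails.} The estimate $d(a',b')\le\tfrac12 d(a,b)+\bdelta$ for $d(a,b)\le\bdelta$ gives no contraction. Summed over a subdivision of $\gamma$ into pieces of length $\le\bdelta$, it yields $\length(\gamma')\le\tfrac12\length(\gamma)+\bdelta\cdot\#\{\text{pieces}\}$. Since $\#\{\text{pieces}\}\approx\length(\gamma)/\bdelta$, the error term is itself of order $\length(\gamma)$. These errors are incurred once per piece, so the single hypothesis $d(x_-,x_+)\ge\bdelta$ cannot absorb them. Indeed, at the scale of the hyperbolicity constant nothing contracts: adjacent vertices of a graph cannot be sent to points at distance $\tfrac12$.

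\textbf{How to repair it.} The contraction must come from a separation of scales, which you never set up. Suppose $d(a,\alpha),d(b,\alpha)\ge r$ and $d(a,b)\le L$ with $r\ge\max\{2L,\bdelta\}$. Then $d(\pi_\alpha(a),\pi_\alpha(b))\le\bdelta$ and $\gen{a,b}_{\pi_\alpha(a)}\ge r-\tfrac L2$. Hence the points of $[a,\pi_\alpha(a)]$ and $[b,\pi_\alpha(b)]$ at distance $r-K$ from $\alpha$ are within a constant $E$ \emph{independent of $L$}, as soon as $K\ge\tfrac L2$. Now choose pieces of length $L=4E$ and set $K=\tfrac L2$. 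Let $\gamma'$ join consecutive images by geodesics of length $\le E$, pushing $\gamma_\pm$ along $[\gamma_\pm,x_\pm]$. Then:
\begin{itemize}
\item $\length(\gamma')\le E(\length(\gamma)/L+1)=\tfrac14\length(\gamma)+E$;
\item $\gamma'$ avoids $\calN_{r-K-E}(\alpha)$;
\item the endpoints of $\gamma'$ still have projections $x_\pm$.
\end{itemize}
What $d(x_-,x_+)\ge\bdelta$ absorbs is only this additive $E$ per induction step, when you unroll the recursion down to the linear base case.

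\textbf{Comparison with the paper.} The paper needs no induction. It cuts $\gamma$ into pieces $\gamma_i$ whose endpoint projections are $\approx\bdelta$ apart, which is possible since $\pi_\alpha$ is coarsely Lipschitz. For each piece, the broken path through the two projections is $\bdelta$-close to the geodesic joining the endpoints of $\gamma_i$. So that geodesic contains a point $y$ with $d(y,\gamma_i)\ge d(\gamma,\alpha)-\bdelta$. The paper then applies the exponential divergence estimate of Bridson--Haefliger III.H.1.6 to get $\length(\gamma_i)\ge\exp(\tfrac1\bdelta d(\gamma,\alpha))$. Note that this result is a bisection argument, not the level-set halving you attribute to it. Summing over the $\gtrsim d(x_-,x_+)/\bdelta$ pieces finishes the proof. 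Your repaired argument would in effect reprove exponential divergence from scratch via contraction of equidistant sets, which is self-contained but longer.
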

\begin{proof}
    If $d(x_-,x_+)\ge \bdelta$ then the concatenation of the three geodesic segments $[\gamma_-,x_-][x_-,x_+][x_+,\gamma_+]$ is $\bdelta$-close to the geodesic $[\gamma_-,\gamma_+]$.
    Let $y\in [\gamma_-,\gamma_+]$ be $\bdelta$ close to $x_-$. Then,
    $d(\gamma,y)\ge d(\gamma,x_-)-\bdelta\ge d(\gamma,\alpha)-\bdelta$. 
    By exponential divergence we have $\length(\gamma)\ge \exp(\tfrac 1\bdelta d(\gamma,y))\ge \exp(\tfrac 1\bdelta d(\gamma,\alpha))$. 
    
    To get the result of the lemma decompose $\gamma = \gamma_1\dots\gamma_n$ into segments whose projections to $\alpha$ are at distance $\approx \bdelta$.
\end{proof}

Before proving \Cref{cor commensurated hyperbolic subgroup} we prove the following special case:

\begin{corollary}\label{commensurators of rigids}
Let $G$ be a residually-finite geometrically rigid hyperbolic group. If $G$ is a subgroup of the hyperbolic group $G_1$ then $[\Comm_{G_1}(G):G]<\infty.$
\end{corollary}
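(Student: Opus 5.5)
Conjugation by any $\gamma\in\Comm_{G_1}(G)$ gives an isomorphism $\phi_\gamma:H_\gamma\to H'_\gamma$ between the finite index subgroups $H_\gamma=G\cap\gamma^{-1}G\gamma$ and $H'_\gamma=\gamma G\gamma^{-1}\cap G$. The plan is to use \Cref{main theorem} to bound its average distortion. We then convert that average bound into a bound on the distance from $\gamma$ to $G$ in $G_1$, using the exponential divergence estimate of \Cref{exp divergence from geodesic}. This gives only finitely many cosets $\gamma G$.

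\textbf{Step 1: the equivariant map.} Fix a Cayley graph $X_1$ of $G_1$. Since $G$ is hyperbolic and a subgroup of a hyperbolic group, I would first argue it is finitely generated and that conjugation gives $\phi_\gamma$. Restricting to $H=H_\gamma\cap\dot G$ (finite index), \Cref{main theorem} gives a $\phi_\gamma$-equivariant $\Phi:X\to X$ with $\TD(\Phi)\le C[G:H]$, where $X$ is a Cayley graph of $G$.

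The ``true'' equivariant map is $x\mapsto\gamma x$ on $G\subset G_1$. If $G$ is quasiconvex this is a quasi-isometric embedding at distance $|\gamma|$ from $G$, but quasiconvexity is not given. To handle this, compare $\Phi(x)$ with $\gamma x\gamma^{-1}$-type points. For $h\in H$ we have $\Phi(hx_0)=\gamma h\gamma^{-1}\Phi(x_0)$, so $\Phi(hx_0)$ and $\gamma h\gamma^{-1}$ differ in $G_1$ by the bounded right translate $\Phi(x_0)$, up to the factor $\gamma^{-1}$. This is only a coarse statement.

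\textbf{Step 2: averaging.} As in \eqref{eq: average of distances}, averaging over coset representatives $t_i$ shows there is some $t_i$ and some long $G$-geodesic through $t_ix_0$ along which $\Phi$ stretches distances at most linearly, with constant $\bdelta$. Pick a loxodromic $g\in H$ with axis $\alpha$ in $X_1$. The orbit path $\gamma g^k\gamma^{-1}$ lies near the axis $\gamma\alpha$. Meanwhile $\Phi$ produces a path in $G$ of length $\le\bdelta k\,\ell(g)$ connecting the corresponding orbit points of $\phi(g)$, which in $X_1$ lies at distance about $d(\gamma\alpha, G)$ from $\gamma\alpha$. By \Cref{exp divergence from geodesic}, $\bdelta k\ell(g)\ge \exp(\tfrac1\bdelta d)\cdot k\ell_{X_1}(g)$, so $d$ is bounded. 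The averaging of \Cref{cor: AL} supplies the required bound on $\AL(\phi,g)$, and I would choose $g$ among conjugates $t_igt_i^{-1}$ to realize the average.

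\textbf{Step 3: conclusion.} A bounded $d(\gamma\alpha_i,G)$, after translating by $G$, shows $\gamma\in G\cdot B_{R}\cdot G$ for uniform $R$. To get finitely many right cosets I would refine this using several axes whose endpoints are spread in $\partial G$, forcing $\gamma$ to be near $G$ on both sides. This yields $\gamma\in B_R G$, hence $[\Comm_{G_1}(G):G]<\infty$.

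\textbf{Main obstacle.} The hard part is Step 2: extracting a pointwise geometric statement (a single axis near $G$) from an averaged bound on the distortion, and handling the lack of quasiconvexity of $G$ in $G_1$. I would try to use that at least a fixed proportion of the conjugates $t_igt_i^{-1}$ have $\ell(\phi,t_igt_i^{-1})\le 2\bdelta\ell(g)$, by Markov's inequality applied to \Cref{cor: AL}, and apply the divergence lemma to each of those.
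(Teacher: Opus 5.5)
Your Step 2 uses the same mechanism as the paper's proof. It plays the upper bound on $\AL(\phi,g)$ from \Cref{cor: AL} against the exponential lower bound of \Cref{exp divergence from geodesic}. That lower bound is applied to a geodesic of $X$, which stays at distance at least $d(X,\gamma X)-\rho$ from the relevant axis.

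Your ``main obstacle'' is not actually one. For every $t\in G$, the axis $\gamma t\alpha$ of $\gamma tgt^{-1}\gamma^{-1}$ lies within the same $\rho$ of $\gamma X$. So the lower bound $\ell(\phi_\gamma,tgt^{-1})\ge\exp\bigl(\tfrac1\bdelta(d(X,\gamma X)-\rho)\bigr)\ell_{G_1}(g)$ holds for every conjugate, and hence for the average. This is what the paper does. Neither Markov's inequality nor any quasiconvexity of $G$ in $G_1$ is needed.

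You must, however, fix one infinite-order $g\in G$ independently of $\gamma$ and work with powers $g^m\in H_\gamma$, as the definition of $\ell(\phi,g)$ does. If you pick $g\in H_\gamma$ afresh for each $\gamma$, then $\rho$ and the ratio $\ell_G(g)/\ell_{G_1}(g)$ depend on $\gamma$. In that case your bound on $d$ is not uniform.

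The genuine gap is Step 3. Your plan to upgrade $\gamma\in G\,\calB_R(1)\,G$ to $\gamma\in\calB_R(1)\,G$ using several axes cannot work. Every quantity you control, such as $d(\gamma\alpha,X)$ or $d(\gamma X,X)$, is unchanged under $\gamma\mapsto g\gamma$ with $g\in G$. So no argument of this geometric kind can distinguish the left cosets $g\gamma G$ inside one double coset.

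What is missing is an elementary algebraic count. For $\gamma\in\Comm_{G_1}(G)$ one has $g\gamma G=g'\gamma G$ if and only if $g^{-1}g'\in G\cap\gamma G\gamma^{-1}$. Hence $G\gamma G$ is a union of $[G:G\cap\gamma G\gamma^{-1}]<\infty$ left cosets of $G$. Moreover, at most $|\calB_R(1)|$ double cosets meet the ball $\calB_R(1)$ in $G_1$.

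Therefore the uniform bound $d(X,\gamma X)\le R$ for all $\gamma\in\Comm_{G_1}(G)$, which your Step 2 provides, already implies $[\Comm_{G_1}(G):G]<\infty$. The paper uses exactly this fact implicitly. Assuming infinite index, it picks $h_n\in\Comm_{G_1}(G)$ with $d(X,h_nX)\ge n$, shows $\AL(\phi_n,g)\to\infty$, and so contradicts \Cref{cor: AL}.
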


\begin{proof}[Proof of \Cref{commensurators of rigids}]
Let $G$ be a  residually-finite geometrically rigid hyperbolic group, and let $G_1$ be a hyperbolic group. Assume that $G\le G_1$ and $[\Comm_{G_1}G:G]=\infty$.
Let $S\subseteq S_1$ be finite generating sets for $G$ and $G_1$ respectively, and let $X,X_1$ be the corresponding Cayley graphs. 

Since $[\Comm_{G_1}(G):G]=\infty$, for every $n$ there exists $h_n\in \Comm_{G_1}(G)$ such that $d(X,h_nX)\ge n$.
Denote by $\phi_n$ the abstract commensurator $\phi_n:G\cap h_nGh_n\ii \to h_n\ii G h_n \cap G$ given by $\phi_n(g)=h_n\ii gh_n$.

By \cref{cor: AL}, $\AL(\phi_n,g)\le C\cdot \ell(g)$ for all $n$. Thus, to reach a contradiction, it remains to prove the following claim.

\begin{claim}
    For all $g\in G$ of infinite order, we have $\AL(\phi_n,g)\to \infty $ as $n\to \infty$.
\end{claim}

Let $g$ be some infinite order element in $G$. 
Let $o\in X$ be some vertex, 
The orbit $\gen{g}o\subset X$ is at bounded distance $\rho=\rho(g,o,\alpha)$ from a bi-infinite geodesic $\alpha$ in $X_1$.
Then, \begin{equation}\label{eq: dist between coset and geodesic}
    d(h_nX,\alpha)\ge d(h_nX,X)-\rho\ge n-\rho
\end{equation}
Let $k\in \bbN$ be such that $g^k\in G \cap h_nGh_n\ii$.
Then $g^k h_no\in h_nX$. 
Let $x_-,x_+$ be nearest point projections of $h_no,g^kh_no$ on $\alpha$. 
Denote $D = d(o,h_no)=d(g^ko,g^k h_no)$.
Then $d(h_no,\alpha)\le d(h_no,o)+d(o,\alpha )\le \rho+D$.
Therefore, $d(o,x_-o)\le d(o,h_no)+d(h_no,x_-)\le 2D+\rho$.
Similarly, $d(g^ko,x_+)\le \rho+2D$.
Therefore, \begin{equation}\label{eq: dist between projections}
    d(x_-,x_+)\ge d(o,g^ko)-2(\rho+2D).
\end{equation}

For large enough $k$, $d(x_-,x_+)\ge \bdelta$ and so we can apply \Cref{exp divergence from geodesic} to get
\begin{align*}
d_X(o,\phi(g^k)o)&=d_X(o,h_n\ii g^k h_no)\\
&=d_{h_nX}(h_no,g^kh_no)
\\
&\ge \exp(\tfrac 1\bdelta d(h_nX,\alpha)) d(x_-,x_+) &\text{by \cref{exp divergence from geodesic}}\\
&\ge \exp(\tfrac 1 \bdelta (n-\rho))\big(d(o,g^ko)-2(\rho+2D)\big) &\text{by \eqref{eq: dist between coset and geodesic} and \eqref{eq: dist between projections}}
\end{align*}
Diving by $k$ and taking a limit as $k\to \infty$ we get

$$
    \ell_G(\phi(g))\ge \exp(\tfrac 1 \bdelta (n-\rho)) \ell_{G_1}(g)
$$

The same inequality is true if we replace $g$ by the conjugate $tgt\ii$ for $t\in G$ (by replacing $g,o,\alpha$ by $tgt\ii,to, t\alpha$, and noting that $\rho(g,o,\alpha)=\rho(tgt\ii,to,t\alpha)$).
Therefore 
$$\AL(\phi,g) \ge \exp(\tfrac 1 \bdelta (n-\rho)) \ell_{G_1}(g)
$$
Indeed, $\AL(\phi_n,g)\to \infty$ as $n\to \infty$.
\end{proof}

With \Cref{commensurators of rigids} in place,
the rest of the proof of \Cref{cor commensurated hyperbolic subgroup}   follows that of \cite{lazarovich2023commensurated}. We sketch it here for completeness:

\begin{proof}[Proof of \Cref{cor commensurated hyperbolic subgroup}]
Assume that $G\le G_1$ are hyperbolic groups, $G$ is residually finite and $[\Comm_{G_1}(G):G]=\infty$.
Up to passing to finite index we may assume that $G$ is torsion-free.

\textbf{Special case: $G$ is 1-ended} If $G$ is 1-ended, then by \Cref{commensurators of rigids}, $G$ is not geometrically rigid. 
By \cite{bowditchjsj} (see also \cite{selajsj}), $G$ is either a surface group or $G$ has a non-trivial JSJ decomposition.
Assume for contradiction that $G$ has a non-trivial JSJ decomposition:  $G=\pi_1(\calG)$ where $\calG$ is a graph of groups with cyclic edge stabilizers. 
It follows from Bowditch's description of the JSJ decomposition, that for each $g_1\in \Comm_{G_1}(G)$ and each edge group $Z$ of $\calG$, there exists an edge group $Z'$ of $\calG$ such that $g_1Zg_1\ii,Z'$ are commensurable. 
Since there are finitely many $G$-conjugacy classes of edge groups, it follows that for each edge group $Z$ we have $[\Comm_{G_1}(Z):Z]=\infty$. This contradicts the well-known fact that in a hyperbolic group, every infinite cyclic subgroup has finite index in its relative commensurator.
Therefore, $G$ is a surface group.

\textbf{General case:}  
Let $G = A_1*\dots *A_n * F_r$ be its Grushko decomposition into freely indecomposable subgroups and a free part. 
Since $G$ is torsion-free, each $A_i$ is 1-ended.
It suffices to show that each $A_i$ is a surface group.

For each $g_1\in \Comm_{G_1}(G)$ the conjugate $g_1A_ig_1\ii$ is commensurable with some conjugate $gA_jg\ii$ for some $g\in G$ and $j\in\{1,\dots,n\}$.
It follows that $[\Comm_{G_1}(A_i):A_i]=\infty$.
The group $A_i\le G\le G_1$ is a 1-ended residually finite hyperbolic group of a hyperbolic group. So by the special case, $A_i$ is a surface group.
\end{proof}

\bibliographystyle{alpha}
\bibliography{biblio}
\end{document}

%% file: preamble.tex
\usepackage[leqno]{amsmath}
\usepackage{amssymb,amsfonts,xfrac,MnSymbol}
\usepackage{amsthm}
\usepackage{cite}
\usepackage[colorlinks=true, pdfstartview=FitV, linkcolor=blue, citecolor=blue, urlcolor=blue]{hyperref}
\usepackage{todonotes}
\usepackage{enumitem}
\usepackage{graphicx}
\usepackage{tikz-cd, tikz}
\usepackage{color}
\usepackage{import}
\usepackage[nameinlink,capitalize,noabbrev]{cleveref}
\usepackage{comment}
\usepackage{thm-restate}

\numberwithin{equation}{section}

\newtheorem{theorem}{Theorem}[section]
\newtheorem{claim}[theorem]{Claim}
\newtheorem{proposition}[theorem]{Proposition}
\newtheorem{lemma}[theorem]{Lemma}
\newtheorem{corollary}[theorem]{Corollary}
\newtheorem{conjecture}[theorem]{Conjecture}

\newtheorem*{theorem*}{Theorem}
\newtheorem*{claim*}{Claim}
\newtheorem*{proposition*}{Proposition}
\newtheorem*{lemma*}{Lemma}
\newtheorem*{corollary*}{Corollary}

\theoremstyle{definition}
\newtheorem{definition}[theorem]{Definition}

\newtheorem{remark}[theorem]{Remark}

\newtheorem*{definition*}{Definition}
\newtheorem*{observation*}{Observation}
\newtheorem*{remark*}{Remark}
\newtheorem*{example*}{Example}
\newtheorem*{question*}{Question}
\newtheorem*{exercise*}{Exercise}
\newtheorem*{fact*}{Fact}
\newtheorem*{notation*}{Notation}

\newcommand{\bbH}{\mathbb{H}}

\newcommand{\bbN}{\mathbb{N}}

\newcommand{\bbR}{\mathbb{R}}
\newcommand{\bbS}{\mathbb{S}}

\newcommand{\bbZ}{\mathbb{Z}}

\newcommand{\bfH}{\mathbf{H}}

\newcommand{\bfc}{\mathbf{c}}
\newcommand{\bfd}{\mathbf{d}}

\newcommand{\calB}{\mathcal{B}}

\newcommand{\calF}{\mathcal{F}}
\newcommand{\calG}{\mathcal{G}}

\newcommand{\calK}{\mathcal{K}}
\newcommand{\calL}{\mathcal{L}}

\newcommand{\calN}{\mathcal{N}}

\newcommand{\calS}{\mathcal{S}}

\newcommand{\actson}{\curvearrowright}

\newcommand{\inj}{\hookrightarrow}

\newcommand{\homeo}{\cong}

\newcommand{\ii}{^{-1}}

\newcommand{\gen}[1]{\left< #1 \right>}

\newcommand{\tild}[1]{\widetilde{#1}}
\newcommand{\bdelta}{{\boldsymbol\delta}}
\newcommand{\ssm}{\smallsetminus}

\newcommand{\nir}[1]{\todo[inline, color=magenta]{#1}}

\DeclareMathOperator{\SL}{SL}

\DeclareMathOperator{\SO}{SO}

\DeclareMathOperator{\id}{id}
\DeclareMathOperator{\Aut}{Aut}

\DeclareMathOperator{\Out}{Out}

\DeclareMathOperator{\diam}{diam}

\DeclareMathOperator{\Comm}{Comm}

\DeclareMathOperator{\conv}{Conv}
\DeclareMathOperator{\supp}{supp}

\DeclareMathOperator{\length}{length}

\DeclareMathOperator{\AD}{AvgDist}
\DeclareMathOperator{\TD}{TotalDist}
\DeclareMathOperator{\AL}{\ell_{Avg}}

\DeclareMathOperator{\vol}{vol}
\DeclareMathOperator{\Vol}{Vol}
\DeclareMathOperator{\Rips}{Rips}
\DeclareMathOperator{\MP}{MinPrim}
\DeclareMathOperator{\injrad}{InjRad}
\DeclareMathOperator{\MI}{MI}
\DeclareMathOperator{\MIG}{MIG}
